\documentclass[english,11pt]{article}
\usepackage{custom_tex}

\begin{document}
\title{Robust Inference Under Heteroskedasticity \\
via the Hadamard Estimator}

\date{\today}

\author{Edgar Dobriban, 
Weijie J.~Su,
Yachong Yang, and
Zhixiang Zhang\footnote{
Author affiliations: Department of Statistics and Data Science,  
University of Pennsylvania (ED, WJS, YY).
Department of Mathematics, 
University of Macau (ZZ).
E-mail addresses: 
\texttt{dobriban@wharton.upenn.edu},
\texttt{suw@wharton.upenn.edu},
\texttt{yachong@wharton.upenn.edu}
\texttt{zhixzhang@um.edu.mo}.
}}

\maketitle
\abstract{Drawing statistical inferences from large datasets in a  model-robust way is an important problem in statistics and data  science. In this paper, we 
propose methods that are robust to large and unequal noise in
different observational units (i.e., heteroskedasticity) for
statistical inference in linear regression. We leverage the
\emph{Hadamard estimator}, which is unbiased for the variances of
ordinary least-squares regression. This is in contrast to the popular
White's sandwich estimator, which can be substantially biased in high dimensions. 
We propose to estimate the signal strength, noise level, signal-to-noise
ratio, and mean squared error via the Hadamard estimator. We develop
a new degrees of freedom adjustment that gives more accurate
confidence intervals than variants of White's sandwich
estimator. Moreover, we provide conditions ensuring the estimator
is well-defined, by studying a new random matrix ensemble in which 
the entries of a random orthogonal projection matrix are squared. 
We also show approximate normality, using the second-order Poincar\'e inequality. Our work provides improved statistical theory and methods for linear regression in high dimensions. 
}

\tableofcontents

\section{Introduction}
Drawing statistical inferences from large datasets in a way that is robust to model assumptions is an important problem in statistics and data science. In this paper, we study a central question in this area, performing statistical inference for the unknown regression parameters in linear models. 

\subsection{Linear models and heteroskedastic noise} 

The linear regression model 
\begin{equation}\label{eq:linear_model}
Y = X\beta+\ep
\end{equation}
is widely used and fundamental in many areas. The goal is to understand the dependence of an outcome variable $Y$ on some $p$ covariates $x=(x_1,\ldots,x_p)^\top$. We observe $n$ such data points, arranging their outcomes into the $n\times 1$ vector $Y$, and their covariates into the $n\times p$ matrix $X$. We assume that $Y$ depends linearly on $X$, via some unknown $p\times 1$ parameter vector $\beta$. 
The noise vector $\ep$ consists of independent random variables.

A fundamental practical problem is that the structure of noise $\ep$ affects the accuracy of inferences about the regression coefficient $\beta$. If the noise level in an observation is very high, that observation contributes little useful information. Such an observation could bias our inferences, and we should discard or down-weight it. The practical meaning of large noise is that our model underfits the specific observation. However, we usually do not know the noise level of each observation. Therefore, we must design procedures that adapt to unknown noise levels, for instance by constructing preliminary estimators of the noise. This problem of unknown and unequal noise levels, i.e., \emph{heteroskedasticity}, has long been recognized as a central problem in many applied areas, especially in finance and econometrics. 

In applied data analysis, and especially in the fields mentioned
above, it is a common practice to use the ordinary least-squares (OLS) estimator $\hbeta = (X^\top X)^{-1} X^\top Y$ as the estimator of the unknown regression coefficients, despite the potential of heteroskedasticity. The OLS estimator is still unbiased, and has other desirable properties---such as consistency---under mild conditions. For statistical inference about $\beta$, the common practice is to use heteroskedasticity-robust confidence intervals. 

Specifically, in the classical low-dimensional case when the dimension $p$ is fixed and the sample size $n$ grows, the OLS estimator is asymptotically normal with asymptotic covariance matrix $C_\infty = \lim_{n\to\infty} n C $, with
\beq
\label{hbeta_cov}
C = \Cov{\hbeta} = (X^\top X)^{-1} X^\top \Sigma X(X^\top X)^{-1}.
\eeq
Here the covariance matrix of the noise is a diagonal matrix $\Cov{\ep} = \Sigma.$ To form confidence intervals for individual components of $\beta$, we need to estimate diagonal entries of $C$.  \cite{white1980heteroskedasticity}, in one of highest cited papers in econometrics, studied the following plug-in estimator of $C$, which simply estimates the unknown noise variances by the squared residuals: 
\beq
\label{w}
\hC_{\textnormal{W}} = (X^\top X)^{-1}X^\top \diag(\hep)^2 X (X^\top X)^{-1}.
\eeq
Here 
$\hep = Y-X\hbeta$
is the vector containing the residuals from the OLS fit. This is also
known as the \emph{sandwich estimator}, the \emph{Huber-White}, or the
\emph{Eicker-Huber-White} estimator. White showed that this estimator is consistent for the true covariance matrix of $\hbeta$, when the sample size grows to infinity, $n\to \infty$, with fixed 
dimension $p$. Earlier closely related work was done by \cite{eicker1967limit,huber1967behavior}. In theory, these works considered more general problems, but White's estimator was explicit and directly applicable to the central problem of inference in OLS. This may explain why White's work has achieved such a large practical impact, with more than 34,000 citations at the time of writing.

However, it was quickly realized that White's estimator is \emph{substantially biased} when the sample size $n$ is not too large---for instance when we only have twice as many samples as the dimension. This is a problem, because it can lead to incorrect statistical inferences. 
\cite{mackinnon1985some} proposed a bias-correction that is unbiased under homoskedasticity. However, the question of forming confidence intervals has remained challenging. Despite the unbiasedness of the MacKinnon-White estimate in special cases, confidence intervals based on it have below-nominal probability of covering the true parameters in low dimensions \citep[see e.g.,][]{kauermann2001note}.
It is not clear if this continues to hold in the high-dimensional case. 
In fact in our simulations we observe that these confidence intervals (CIs) can be anti-conservative in high dimensions. Thus, constructing accurate CIs in high dimensions remains a challenging open problem.

In this paper, we propose to construct confidence intervals via a variance
estimator that is unbiased \emph{even under heteroskedasticity}.
Since the estimator (described later), is based on Hadamard products,
we call it the \emph{Hadamard estimator}.  This remarkable estimator
has been discovered several times
\citep{hartley1969variance,chew1970covariance,cattaneo2017inference},
and the later works do not always appear to be aware of the earlier
ones. The estimator 
does not appear to be widely known by researchers in finance
and econometrics, and does not appear in standard econometrics
textbooks such as \cite{greene2003econometric}, or in recent review
papers such as \cite{imbens2016robust}. 
We came upon 
the
Hadamard estimator in 2017 while studying the bias of White's
estimator, and were surprised to find out about how early it was discovered. 
We emphasize that the papers above did not study many
of the important properties of this estimator. For instance, it is not even
clear based on these works under what conditions this estimator exists. 

In our paper, we start by showing how to solve five important problems in the linear regression model using the Hadamard estimator: constructing confidence intervals, estimating signal-to-noise ratio (SNR), signal strength, noise level, and mean squared error (MSE) in a robust way under heteroskedasticity (Section \ref{sec:five-import-probl}). To use the Hadamard estimator, we need to show the fundamental result that it is well-defined (Section \ref{sec:hadmard-estimator}). 
We prove matching upper and lower bounds on the relation between the dimension and sample size guaranteeing that the Hadamard estimator is generically well-defined. We also prove well conditioning. For this, we study a new random matrix ensemble in which the entries of a random partial orthogonal projection matrix are squared. 
Specifically, we prove sharp bounds on the smallest and largest eigenvalues of this matrix. This mathematical contribution should be of independent interest. 

Next, we develop a new degrees-of-freedom correction for the Hadamard estimator, which gives more accurate confidence intervals than several variants of the sandwich estimator  (Section \ref{sec:degr-freed-adjustm-1}).  Finally, we also establish the rate of convergence and approximate normality of the estimator, using the second-order Poincar\'e inequality (Section \ref{rate}).  We also perform numerical experiments to validate our theoretical results (Section \ref{sec:num}). Software implementing our method, and reproducing our results, is available from the authors' GitHub page, \url{http://github.com/dobriban/Hadamard}.

{\bf Notation.}
For a positive integer $n$, we denote $[n]=\{1,\ldots,n\}$.
For a vector $v \in \mathbb{R}^{n}$, let $\|v\|:=(\sum_{i=1}^n v_i^2)^{1/2}$ be the Euclidean norm. 
For any matrix $A \in \mathbb{R}^{m\times n}$, let $\|A\|$ or $\|A\|_{\op}$ stand for the operator norm, defined by $\| A\| := \sup_{v \in \mathbb{R}^{n}, v \neq 0} \|Av\|_2 / \| v\|_2$. 
The Frobenius norm is defined by $\|A\|_{\operatorname{\Fr}}:=(\sum_{i=1}^m \sum_{j=1}^n A_{ij}^2)^{1/2} $; and the infinity norm is $\|A\|_{\infty}:= \max_{1 \leq i \leq m} \sum_{j=1}^n |A_{ij}|$. 

\section{Main Results}
\label{me}

\subsection{Solving five problems under heteroskedasticity}
\label{sec:five-import-probl}

Under heteroskedasticity, some fundamental estimation and inference
tasks in the linear model are more challenging than
under homoskedasticity. As we will see, the difficulty often arises from a
lack of a good estimator of the variance of the OLS estimator. For the
moment, assume that there is an unbiased estimator of the
coordinate-wise variances of the OLS estimator. That
is, we consider a vector $\hV$ satisfying
$\E \hV = V$
under heteroskedasticity, where $V = \diag C = \diag\Cov{\hbeta}$ is
defined through equation
\eqref{hbeta_cov}. To define this unbiased estimator, we collect some useful
notation as follows, though the estimator itself shall be introduced in detail
in Section \ref{sec:hadmard-estimator}. Let $S = (X^\top X)^{-1} X^\top$  be the matrix used in defining the ordinary least-squares estimate, and $Q = I_n- X(X^\top X)^{-1}X^\top $ be the projection into the orthocomplement of the column space of $X$. Here $I_n$ is the identity matrix. Let us denote by $M\odot M$ the Hadamard---or elementwise---product of a matrix or vector $M$ with itself.

Among others, the following \textit{five} important applications
demonstrate the usefulness of the unbiased variance estimator $\hV$.

\vspace{-1em}
\paragraph{Constructing confidence intervals.} 
A first fundamental problem is inference for the regression coefficients. Assuming the noise $\varepsilon$ in the linear model
\eqref{eq:linear_model} follows a heteroskedastic normal distribution
$\ep\sim\N(0,\Sigma)$ for a diagonal covariance matrix $\Sigma$, the
random variable $(\hbeta_j - \beta_j)/\sqrt{V_j}$ follows the standard normal distribution. We replace the unknown variance $V_j$ of the OLS estimator
by its approximation $\hV_j$ and focus on the distribution of the
following \textit{approximate} pivotal quantity
\begin{equation}\label{eq:app_pivot}
\frac{\hbeta_j - \beta_j}{\sqrt{\hV_j}}.
\end{equation}
The distribution of this random
variable is approximated by a $t$ distribution in Section
\ref{sec:degr-freed-adjustm-1} and this plays a pivotal role in constructing confidence intervals and
conducting hypothesis testing for the coefficients. 
More generally, 
our inference result handles 
any linear combination---contrast---of $\beta$, see the result in Section \ref{rate}.

\vspace{-1em}
\paragraph{Estimating the SNR.}
Recall that $\|x\|= (\sum_i x_i^2)^{1/2}$ is the Euclidean norm of a vector $x\in \mathbb{R}^{n}$. 
The signal-to-noise ratio (SNR)
\[\snr = \frac{n \|\beta\|^2}{\E \|\ep\|^2}=\frac{n\|\beta\|^2}{\tr \Sigma}\]
of the linear model \eqref{eq:linear_model} is a fundamental measure
that quantifies the fraction of variability
explained by the covariates of an observational unit. In genetics, the SNR
corresponds to heritability if the response $y$ denotes the phenotype
of a genetic trait \citep{visscher2008heritability}. Existing work on
estimating this important ratio in linear models, however, largely
focuses on the relatively simple case of homoskedasticity (see, for
example, \cite{dicker2014variance,janson2017eigenprism}). Without
appropriately accounting for heteroskedasticity, the estimated SNR
may be unreliable. 

As an application of the estimator $\hV$, we propose to estimate the SNR using
\begin{equation}\label{eq:snr_estimator}
\widehat{\snr} = \frac{\|\hbeta\|^2 - 1_p^\top \hV}{n^{-1}1_p^\top (Q \odot Q)^{-1} (\hep \odot \hep)},
\end{equation}
where recall that $\hep$ is the vector of residuals in the linear
model, and $1_p$ denotes a column vector with all $p$ entries being
ones. Above, $(Q \odot Q)^{-1}$ denotes the inverse of the Hadamard
product $Q \odot Q$ of $Q=I_n- X(X^\top X)^{-1}X^\top$ with itself (we
will later study this invertibility in detail). The numerator and denominator of the fraction in
\eqref{eq:snr_estimator} are unbiased for the signal part and noise
part, respectively, as we show in the next two examples. 
As shown in Section \ref{sec:SNR},
this estimator is ratio-consistent.

\vspace{-1em}
\paragraph{Estimating signal squared magnitude.} A further fundamental
problem is estimating the magnitude of the regression coefficient
$\|\beta\|^2$. From the identity
$\E \|\hbeta\|^2 = \|\beta\|^2 + \tr\left(\Cov{\hbeta} \right)$,
it follows that an unbiased estimator of $\tr\left(\Cov{\hbeta} \right)$ is $1_p^\top
\hV$. Thus, an unbiased estimator of the squared signal
magnitude is $\|\hbeta\|^2 - 1_p^\top \hV$.

\vspace{-1em}
\paragraph{Estimating the total noise level.} As an intermediate step in the
derivation of the unbiased estimator $\hV$, we obtain the identity
\begin{equation}
\sigvec = (Q \odot Q)^{-1} \E(\hep \odot \hep).
\end{equation}
That is, the vector $\diag(\Sigma)$ of the entries of $\Sigma$ can be written as a matrix-vector product in the appropriate way.
As a consequence of this, we can use $(Q \odot Q)^{-1} (\hep \odot \hep)$
to estimate $\diag(\Sigma)$ in an unbiased way. In addition, we can use $1_p^\top (Q \odot Q)^{-1} (\hep \odot \hep)$
as an unbiased estimate of the total noise level $\tr(\Sigma) = \sum_{i=1}^n \mathrm{Var}(\ep_i)$.

\vspace{-1em}
\paragraph{Estimating the MSE.} An important problem concerning the least-squares method is estimating its mean squared error (MSE). Let 
$\mse = \E \|\hbeta - \beta\|^2$
be the MSE. Consider the estimator 
$\widehat{\mse}  = \sum_{i=1}^n \hV_i.$
As in the part ``Estimating signal squared magnitude,'' it follows that $\widehat{\mse}$ is an unbiased estimator of the MSE. Later in Section \ref{sec:num} we will show in simulations that this estimator is more accurate than the corresponding estimators based on White's and MacKinnon-White's covariance estimators. 

\subsection{The Hadamard estimator and its well-posedness}
\label{sec:hadmard-estimator}
This section specifies the variance estimator $\hV$. This estimator
has appeared in
\citet{hartley1969variance,chew1970covariance,cattaneo2017inference},
and takes the
following form of matrix-vector product
$$\hV = A (\hep\odot\hep),$$
where the matrix $A$ is 
\beq\label{ad}
A =  (S\odot S) (Q\odot Q)^{-1}.
\eeq
Here $(Q\odot Q)^{-1}$ is the usual
matrix inverse of  $Q\odot Q$ and recall that both $Q\odot Q$ and $\hep\odot\hep$ denote the
Hadamard product. As such, $\hV$ is henceforth referred to as the
\emph{Hadamard estimator}. In short, this is a method of moments estimator,
using linear combinations of the squared residuals. 

While the Hadamard estimator enjoys a simple expression, there is little work on a fundamental question: whether this estimator \textit{exists or not}. More precisely, in order for the
Hadamard estimator to be well-defined, the matrix $Q\odot Q$ must be
invertible. Without this knowledge, all five
important applications in Section \ref{sec:five-import-probl} would
suffer from a lack of theoretical foundation. While the invertibility
can be checked for a given dataset, knowing that it should hold 
under general conditions gives us a confidence that the method can 
work broadly.

As a major thrust of this paper, we provide a deep understanding of
under what conditions $Q\odot Q$ should be expected to be
invertible. The problem is theoretically nontrivial, because there are
no general statements about the invertibility of matrices whose entries are
squared values of some other matrix. In fact, $Q = I_n- X(X^\top
X)^{-1}X^\top $ is an $n\times n$ \emph{rank-deficient}
projection matrix of rank $n-p < n$. Therefore,
$Q$ itself is not invertible, and it is not clear how its rank behaves when the entries
are squared. However, we have the following lower bound on $n$ for this
invertibility to hold.
\begin{proposition}[Lower bound]\label{prop:known}
If the Hadamard product $Q \odot Q$ is invertible, then the sample size $n$ must be at least
\begin{equation}\label{eq:n_p_necessary}
n \ge p + \frac12 + \sqrt{2p + \frac14}.
\end{equation}
\end{proposition}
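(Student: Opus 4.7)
The plan is to bound the rank of $Q \odot Q$ in terms of the rank of $Q$ itself, then observe that invertibility forces the rank to be $n$.

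First I would write $Q$ in terms of its spectral factorization. Since $Q = I_n - X(X^\top X)^{-1}X^\top$ is a symmetric projection of rank $n-p$, we can write $Q = UU^\top$ where $U \in \mathbb{R}^{n \times (n-p)}$ has orthonormal columns $u_1,\dots,u_{n-p}$. Then $Q_{ij} = \sum_{k=1}^{n-p} u_{k,i}u_{k,j}$, so
\begin{equation*}
(Q\odot Q)_{ij} = Q_{ij}^2 = \sum_{k,l=1}^{n-p} (u_k \odot u_l)_i \, (u_k \odot u_l)_j.
\end{equation*}
This identity means that $Q \odot Q = \sum_{k,l=1}^{n-p} (u_k \odot u_l)(u_k \odot u_l)^\top$, so the column space of $Q\odot Q$ is contained in the span of the Hadamard products $\{u_k \odot u_l : 1 \le k,l \le n-p\}$.

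The key combinatorial step is that $u_k \odot u_l = u_l \odot u_k$, so this span equals the span of the symmetric pairs $\{u_k \odot u_l : 1 \le k \le l \le n-p\}$, which has at most $\binom{n-p+1}{2} = (n-p)(n-p+1)/2$ elements. Therefore
\begin{equation*}
\mathrm{rank}(Q \odot Q) \le \frac{(n-p)(n-p+1)}{2}.
\end{equation*}
If $Q \odot Q$ is invertible, then this rank must equal $n$, so $(n-p)(n-p+1)/2 \ge n$.

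Finally I would solve this inequality. Setting $m = n-p$ and rearranging, the condition becomes $m^2 - m - 2p \ge 0$, whose positive root is $m \ge (1 + \sqrt{1+8p})/2$. Substituting back $m = n-p$ and simplifying yields exactly $n \ge p + \tfrac12 + \sqrt{2p + \tfrac14}$, which is the claimed bound. The only nontrivial step is the rank bound via symmetric Hadamard pairs; everything else is algebraic, so I do not expect a significant obstacle here beyond making that counting argument precise.
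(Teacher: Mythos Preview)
Your argument is correct and is essentially the paper's own proof: both write $Q$ as a sum of rank-one projectors $u_k u_k^\top$, expand $Q\odot Q$ bilinearly into a sum of $(u_k\odot u_l)(u_k\odot u_l)^\top$ terms, and use the symmetry $u_k\odot u_l = u_l\odot u_k$ to bound the rank by $\binom{n-p+1}{2}$, after which the quadratic inequality is solved. The only cosmetic difference is that the paper states the rank bound for an arbitrary symmetric matrix and solves the quadratic in $p$ rather than in $m=n-p$, but the content is identical.
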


This result reveals that the Hadamard estimator simply \emph{does not exist}
if $n$ is only slightly greater than $p$, (say $p=n+1$), though the OLS estimator
exists in this regime. The proof of Proposition \ref{prop:known} comes
from a well-known property of the Hadamard product, that is, if a
matrix $B$ is of rank $r$, then the rank of $B \odot B$ is at most
$r(r+1)/2$ \citep[e.g.,][]{horn1991topics}. For completeness, a proof of this property is
given in Section \ref{pflb}. Using this property, the invertibility
of $Q \odot Q$ readily implies
\[
n \le \frac{(n - p)(n - p + 1)}{2},
\]
which is equivalent to \eqref{eq:n_p_necessary}.

In light of the above, it is tempting to ask whether
\eqref{eq:n_p_necessary} is sufficient for the existence of the
Hadamard estimator. In general, this is not the case. For example, let
$X =
\begin{pmatrix}
R\\
\bm 0
\end{pmatrix}$
for any orthogonal matrix $R \in \R^{p \times p}$. Then, $Q \odot Q$
is not invertible as $Q$ is a
diagonal matrix whose first $p$ diagonal entries are 0 and the
remaining are 1. This holds no matter how large $n$ is compared to
$p$.  However, such design matrices $X$ that lead to a degenerate $Q \odot Q$
are very ``rare'' in the sense of the following theorem. Recall that $Q = I_n - X(X^\top X)^{-1} X^\top$.
\begin{theorem}\label{thm:measure_0}
The set
\[
\left\{ X \in \R^{n \times p}: Q \odot Q \text{ does not have full rank} \right\}
\]
has Lebesgue measure zero in $\R^{np}$ if the inequality
\eqref{eq:n_p_necessary} is satisfied.
\end{theorem}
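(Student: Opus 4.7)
My plan is to follow the classical ``generic nonsingularity'' recipe: identify the singular locus as the zero set of a real polynomial that is not identically zero, and invoke the fact that such zero sets have Lebesgue measure zero in the ambient space. On the Zariski-open set $U = \{X \in \R^{n\times p} : \det(X^\top X) \ne 0\}$, whose complement already has measure zero, the map $X \mapsto Q(X) = I_n - X(X^\top X)^{-1} X^\top$ is rational in the entries of $X$; so is $\det(Q(X) \odot Q(X))$, and clearing denominators by multiplying through by $\det(X^\top X)^N$ for sufficiently large $N$ yields a polynomial $P(X)$ whose zero set inside $U$ coincides with the degeneracy locus of $Q \odot Q$. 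By the standard lemma that the vanishing set of a nonzero real polynomial on $\R^{np}$ is Lebesgue-null, the theorem reduces to exhibiting a single $X^{\ast} \in U$ with $Q(X^{\ast}) \odot Q(X^{\ast})$ invertible.

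To frame the existence question, factor $Q = V V^\top$ where $V \in \R^{n \times (n-p)}$ has orthonormal columns, and let $V_1, \ldots, V_n \in \R^{n-p}$ denote its rows. Then $(Q \odot Q)_{ij} = (V_i^\top V_j)^2 = \langle V_i \otimes V_i,\, V_j \otimes V_j \rangle$, so $Q \odot Q$ is the Gram matrix of the $n$ symmetric tensors $\{V_i \otimes V_i\}$ inside $\mathrm{Sym}^2(\R^{n-p})$, a vector space of dimension $(n-p)(n-p+1)/2$. Hypothesis \eqref{eq:n_p_necessary} is exactly what ensures this dimension is at least $n$, so there is no a priori dimensional obstruction to the linear independence we need, and in this sense the upper bound from Proposition \ref{prop:known} is tight.

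The main obstacle is producing such an orthonormal frame $V$, subject to the constraint $\sum_i V_i V_i^\top = I_{n-p}$. I would argue this via genericity on the Stiefel manifold of $(n-p)$-frames in $\R^n$: the set of frames with linearly dependent symmetric squares is cut out by a polynomial condition on this connected manifold, so it is either all of the manifold or a proper subvariety, and the former is ruled out by any single witness. For small instances direct computation suffices---for example, with $p=1$ and $n=3$, taking $X = (1,1,1)^\top$ gives $Q = I_3 - \frac{1}{3} 1_3 1_3^\top$, and one checks that $Q \odot Q$ has eigenvalues $\{2/3, 1/3, 1/3\}$, all nonzero. For general $(n,p)$ satisfying \eqref{eq:n_p_necessary}, I would build a witness inductively, appending a row or column to a valid smaller configuration and using a transversality argument to stay outside the lower-dimensional degeneracy locus at each step. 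Once any witness is secured, $P$ is certified nonzero and the theorem follows.
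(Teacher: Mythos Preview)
Your reduction to a single witness is correct and matches the paper: on $U=\{\det(X^\top X)\ne 0\}$, the quantity $\det(Q\odot Q)$ is a ratio of polynomials in the entries of $X$, so its zero set is Lebesgue-null once it is shown not to vanish identically. Your Gram-matrix reformulation via $V_i\otimes V_i\in\mathrm{Sym}^2(\R^{n-p})$ is also correct and is essentially the dual of the paper's Lemma~\ref{lm:rank_identity}.

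The gap is the witness construction, which is the substantive content of the theorem. Your proposal to induct on the Stiefel manifold by ``appending a row or column'' is not carried out, and the constraint $\sum_i V_iV_i^\top=I_{n-p}$ genuinely obstructs it: you cannot vary a single entry of $V$ freely, so a bare appeal to transversality just restates the question of why the degeneracy locus is proper. The paper's key idea, which your sketch is missing, is that the orthonormality constraint can be dropped. In your language, replacing $V$ by $VA$ for any invertible $A$ sends $V_i\otimes V_i\mapsto (A^\top\!\otimes A^\top)(V_i\otimes V_i)$, so linear independence of the symmetric squares depends only on the column space of $V$; equivalently (Lemma~\ref{lm:equi}), linearly equivalent families $\{u_i\}$ in $\R^n$ yield the same span of Hadamard products $\{u_i\odot u_j\}$. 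One is then free to work with \emph{unconstrained} vectors $u_1,\ldots,u_r\in\R^n$, and the paper's Lemma~\ref{lm:unrestricted} gives a clean induction on $n$: the $n\times n$ determinant formed from any $n$ of the $u_i\odot u_j$, expanded along its first column, has as leading cofactor an $(n{-}1)\times(n{-}1)$ determinant of the same type in the truncated vectors $u_i^{(-1)}\in\R^{n-1}$, generically nonzero by hypothesis. Gram--Schmidt then converts the unconstrained witness into the orthonormal one you need.
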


Therefore, the lower bound in Proposition \ref{prop:known} is sharp.
Roughly speaking, $n \ge p + c\sqrt{p}$, for $c>0$, is sufficient for the
invertibility of $Q \odot Q$. The proof of this result is new in the vast literature on the Hadamard
matrix product. In short, our proof uses certain algebraic
properties of the determinant of $Q \odot Q$ and employs a novel
induction step. Section \ref{sec:exist-hadam-estim} is devoted to developing the proof of
Theorem \ref{thm:measure_0} in detail. To be complete,
\cite{cattaneo2017inference} show high-probability invertibility when
$p>2n$ for Gaussian designs.
Our invertibility
result is more broadly applicable.

Up to now, we have conditioned on $X$, 
working in a fixed design setting. 
To better appreciate the theoretical contributions of our paper,
we consider a random matrix $X$ in the following corollary, which ensures that the Hadamard estimator is well-defined almost surely for popular random matrix ensembles of $X$ such as the Wishart ensemble.

\begin{corollary}\label{de}
Under the same conditions as in Theorem \ref{thm:measure_0}, if $X$ is
sampled from a distribution that is absolutely continuous with respect
to the Lebesgue measure on $\R^{n \times p}$ (put simply, $X$ has a
density), then $Q \odot Q$ is invertible almost surely.
\end{corollary}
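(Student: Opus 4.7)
The corollary is a direct consequence of Theorem~\ref{thm:measure_0} combined with the defining property of absolute continuity, so the proof will be very short. The overall strategy is: (i) identify the ``bad'' event as a Lebesgue null set, (ii) verify that this set is measurable, and (iii) invoke absolute continuity to conclude it receives probability zero.

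More precisely, I would first let
\[
E = \left\{ X \in \mathbb{R}^{n\times p} : X^\top X \text{ is singular, or } Q \odot Q \text{ is singular}\right\},
\]
bundling together the two possible failure modes (note that $Q$ is only defined when $X^\top X$ is invertible). The set where $X^\top X$ is singular is the zero set of the polynomial $\det(X^\top X)$ in the entries of $X$, and this polynomial is not identically zero whenever $n \ge p$, so this part has Lebesgue measure zero. Theorem~\ref{thm:measure_0} directly gives that the remaining part of $E$ also has Lebesgue measure zero, provided \eqref{eq:n_p_necessary} holds. Hence $E$ has Lebesgue measure zero in $\mathbb{R}^{np}$.

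For measurability, I would note that the entries of $Q = I_n - X(X^\top X)^{-1} X^\top$ are rational functions of the entries of $X$ with denominator a power of $\det(X^\top X)$. Consequently, $\det(X^\top X)^{2n} \cdot \det(Q \odot Q)$ extends to a polynomial in the $np$ entries of $X$, so $E$ is an algebraic variety and in particular Borel. By the hypothesis that the law of $X$ is absolutely continuous with respect to Lebesgue measure on $\mathbb{R}^{n\times p}$, every Lebesgue null set is assigned probability zero; applying this to $E$ yields $\Pr(X \in E) = 0$, and so $Q \odot Q$ is invertible almost surely.

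There is no real obstacle here: all of the genuine mathematical content is carried by Theorem~\ref{thm:measure_0}, and the only mild subtlety is verifying the measurability of $E$, which is handled by the polynomial identity above.
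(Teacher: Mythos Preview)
Your proposal is correct and follows essentially the same approach as the paper, which treats the corollary as an immediate consequence of Theorem~\ref{thm:measure_0} and absolute continuity (the paper does not provide a separate written proof). Your added care about measurability and the handling of the set where $X^\top X$ is singular simply make explicit details the paper leaves implicit.
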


Although $Q\odot Q$ is invertible under very general
conditions, our simulations reveal that the condition number of this
matrix can be very large for $p$ close to $n$ due to very small eigenvalues. This is problematic, because the estimator can then amplify the error.  Our next result shows that $Q\odot Q$ is well-conditioned under some conditions if $n > 2p$. We will show that this holds for certain random design matrices $X$. 

Suppose for instance that the entries of $X$ are i.i.d.~standard normal, $X_{ij}\sim \N(0,1)$. Then, each diagonal entry of $Q = I_n -  X(X^\top X)^{-1}X^\top $ is relatively large, of unit order. The off-diagonal entries are of order $1/n^{1/2}$. When we square the entries, the off-diagonal entries become of order $1/n$, while the diagonal ones are still of unit order. Thus, it is possible that the matrix is \emph{diagonally dominant}, so the diagonal entries are larger than the sum of the off-diagonal ones. This would ensure well-conditioning. We will show rigorously that this is true under some additional conditions.

Specifically, we will consider a \emph{high-dimensional asymptotic} setting, where the dimension $p$ and the sample size $n$ are both large. We assume that they grow proportionally to each other, $n,p \to \infty$ with $p/n\to\gamma>0$. This is a modern setting for high-dimensional statistics, and it has many connections to random matrix theory \citep[see e.g.,][]{bai2010spectral, paul2014random, yao2015large}.

We will provide bounds on the largest and smallest eigenvalues. We can handle correlated designs $X$, where each row is sampled i.i.d.~from a distribution with
$p\times p$
covariance matrix $\Gamma$. Let $\Gamma^{1/2}$ be the symmetric square root of $\Gamma$.

\begin{theorem}[Eigenvalue bounds for the Hadamard product with a random design] \label{thm:Tnorm}
Suppose the rows $x_i$ of $X$ are i.i.d.~and have the form $x_i =
\Gamma^{1/2} z_i$, where $z_i$ have i.i.d.~entries with mean zero,
unit variance and uniformly bounded $(8+\delta)$-th moment. Suppose
that $\Gamma$ is invertible. 
Then, as $n,p\to\infty$ such that
for $ \gamma_{p,n}:=p/n$, we have 
$\limsup \gamma_{p,n} < 1/2$,
the matrix $T=Q\odot Q$ 
with  $Q = I_n- X(X^\top X)^{-1}X^\top $
satisfies the following eigenvalue bounds  
for any fixed $\xi>0$ and sufficiently large $n$:
\begin{equation*}
\lambda_{\max}(T)<1-\gamma_{p,n}+\xi,\end{equation*}
and 
\begin{equation*}
    \lambda_{\min}(T) > (1-\gamma_{p,n})(1-2 \gamma_{p,n})-\xi,
\end{equation*}
with probability at least $1-Cn^{-1-\delta/4} \xi^{-4-\delta/2}$ for some positive constant $C$ not depending on $\xi$.
\end{theorem}

See Section \ref{pf:dd} for a proof. 
Hence,
if $p/n\to \gamma <1/2$, then almost surely
\[
(1-\gamma)(1-2\gamma) \le \liminf\lambda_{\min}(T) \le
\limsup\lambda_{\max}(T) \le (1-\gamma).
\]
Practically speaking, 
the above result states that
the condition number of $T$ is at most
$1/(1-2\gamma)$ with high probability. 
Our invertibility results are stronger than those of  \cite{cattaneo2017inference}. Specifically, we show generic invertibility in finite dimensional designs with probability one, and condition number bounds on non-Gaussian correlated designs that go beyond those considered in their work.

While Corollary \ref{de} proves invertibility for 
continuous distributions, in practice 
some columns of $X$ can be discrete.\footnote{We thank a referee for raising this point.}
In that case, we can still obtain invertibility or condition number bounds by 
applying Lemma \ref{qf} used in the proof of Theorem \ref{thm:Tnorm}. 
That result which has no assumptions on the continuity of $X$ and provides non-asymptotic bounds for the eigenvalues of the matrix. 

As an illustration, we study an ANOVA design.
For two integers $a \le  b$,
we write $[a : b] = \{a, a + 1 \cdots, b\}.$ 
Consider a sequence $1\equiv n_0< n_1 < n_2 < \cdots < n_p \equiv n$.
Consider the ANOVA design where  
$X_{j,1} =1$ for $j \in [1,n_1]$, 
$X_{j,2} =1$ for $j \in [n_1+1, n_2]$,
and so on, 
until $X_{j,p} = 1$ for $j \in [n_{p-1}+1,  n_p]$.
It is readily verified that $x_i^\top R_i^{-1} x_i = 1/(n_j-n_{j-1}-1)$ if $i\in[n_{j-1},n_j]$.
If  $3\le \min_{j\in[p]} (n_{j}-n_{j-1})\le \max_{j\in[p]} (n_{j}-n_{j-1})\le  C$, then $\lambda_{\min}(T)\ge 2/9$ and $\lambda_{\max}(T) \le C/(C+1)$.
Hence, we obtain a bound on
the condition number of $T$.

\subsection{Degrees-of-freedom adjustment}
\label{sec:degr-freed-adjustm-1}

To obtain a confidence interval for $\beta_j$, we propose to approximate the
distribution of the approximate pivot in \eqref{eq:app_pivot} by a $t$-distribution. The key is to find a good approximation to the degrees of freedom. Let us denote by $V_j = \Var{\widehat\beta_j}$, the expected value of $\hV_j$. Suppose the degrees of freedom of $\hV_j$ are $d_j$. Using the second moment properties of the $\chi^2_{d_j}$ variable, these degrees of freedom should obey that
\[
\E \hV_j^2 \approx \frac{V_j^2}{d_j^2} \E \chi^4_{d_j} = V_j^2 (1 + 2/d_j). 
\]
Consequently, we formally define
\begin{equation}\label{eq:dof}
d_j = \frac2{\frac{\E \hV_j^2}{V_j^2} - 1} = \frac{2V_j^2}{\E \hV_j^2 - V_j^2}.
\end{equation}
To proceed, we need to evaluate $\E [\hV \odot \hV] \in \R^p$. The
following proposition gives a closed-form expression of this vector assuming \textit{homoskedasticity}. Let us denote
\beq\label{E}
E =  \diag\left[(X^\top X)^{-1} \right] \odot \diag\left[(X^\top X)^{-1} \right].
\eeq
Recall that $S = (X^\top X)^{-1} X^\top$.

\begin{proposition}[Degrees of freedom]
If the noise $\ep$ has i.i.d. normal entries, we have that the vector of degrees of freedom of $\hV$, defined in equation \eqref{eq:dof}, has the form
\begin{equation}\label{eq:d_formula}
d = \frac{2E}
{\diag\left[ (S\odot S ) 1_n 1_n^\top (S\odot S)^\top \right]
+ 2 \diag\left[(S \odot S) (Q \odot Q)^{-1} (S \odot S)^\top \right] 
- E},
\end{equation}
where the division is understood to be entrywise.
\label{df}
\end{proposition}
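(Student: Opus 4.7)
The plan is to compute $\E \hV_j^2$ in closed form under homoskedastic Gaussian noise and plug into the definition $d_j = 2V_j^2/(\E\hV_j^2 - V_j^2)$. With $\hV = A(\hep\odot\hep)$, $A = (S\odot S) R^{-1}$, and $R := Q\odot Q$, I write $\hV_j = \sum_i A_{ji}\hep_i^2$ and hence $\hV_j^2 = \sum_{i,k} A_{ji} A_{jk}\,\hep_i^2\,\hep_k^2$, so the whole calculation reduces to evaluating the fourth moment $\E[\hep_i^2\hep_k^2]$.

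Under homoskedasticity, $\hep = Q\ep$ with $\ep \sim \N(0,\sigma^2 I_n)$. Expanding $\hep_i^2 \hep_k^2$ in terms of $\ep$ and applying Isserlis' formula for the Gaussian fourth moments $\E[\ep_a\ep_b\ep_c\ep_d]$, then using the idempotency $Q^2 = Q$ to contract indices (in particular $\sum_a Q_{ia}^2 = Q_{ii}$ and $\sum_a Q_{ia}Q_{ka} = Q_{ik}$), I obtain
\[
\E[\hep_i^2\hep_k^2] \;=\; \sigma^4\bigl(Q_{ii}Q_{kk} + 2 Q_{ik}^2\bigr) \;=\; \sigma^4\bigl(Q_{ii}Q_{kk} + 2 R_{ik}\bigr).
\]
Summing against $A_{ji}A_{jk}$ gives
\[
\E\hV_j^2 \;=\; \sigma^4\Bigl(\sum_k A_{jk} Q_{kk}\Bigr)^{\!2} + 2\sigma^4\, [A R A^\top]_{jj}.
\]

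The first bracket collapses via the key identity $R\, 1_n = \diag(Q)$, which is another consequence of $Q^2=Q$ (the $i$-th row sum of $R$ is $\sum_k Q_{ik}^2 = Q_{ii}$). This gives $R^{-1}\diag(Q) = 1_n$, so $\sum_k A_{jk} Q_{kk} = [(S\odot S) 1_n]_j = (SS^\top)_{jj} = [(X^\top X)^{-1}]_{jj}$, whose square is exactly $E_j = \diag[(S\odot S) 1_n 1_n^\top (S\odot S)^\top]_j$. The second bracket simplifies by the cancellation $A R A^\top = (S\odot S) R^{-1}(S\odot S)^\top$, using symmetry of $R$ and the fact that $A = (S\odot S) R^{-1}$. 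Combining with $V_j^2 = \sigma^4 E_j$,
\[
\E\hV_j^2 - V_j^2 \;=\; \sigma^4\Bigl( \diag[(S\odot S) 1_n 1_n^\top (S\odot S)^\top]_j + 2\diag[(S\odot S) R^{-1}(S\odot S)^\top]_j - E_j\Bigr),
\]
and substituting into $d_j = 2V_j^2/(\E\hV_j^2 - V_j^2)$ with the $\sigma^4$'s cancelling yields exactly formula \eqref{eq:d_formula}.

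The only nontrivial step, which I expect to be the main conceptual obstacle, is spotting the identity $R\, 1_n = \diag(Q)$: it is what makes the ``diagonal-diagonal'' piece of $\E\hV_j^2$ collapse to precisely $V_j^2$, so that after the defining subtraction only the ``off-diagonal'' $R_{ik}$ contribution survives, and that contribution in turn admits the clean sandwich form via the $R \cdot R^{-1}$ cancellation inside $A$. Everything else is Gaussian moment bookkeeping and repeated use of $Q^2 = Q$.
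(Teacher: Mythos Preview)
Your proof is correct and follows essentially the same route as the paper: both compute $\E[\hep_i^2\hep_k^2]=\sigma^4(Q_{ii}Q_{kk}+2Q_{ik}^2)$ under homoskedastic Gaussian noise, then simplify using $A(Q\odot Q)=S\odot S$ (equivalently, your identity $(Q\odot Q)1_n=\diag(Q)$) and $Q^2=Q$. Your only cosmetic difference is applying Isserlis directly to $\hep\sim\N(0,\sigma^2 Q)$ rather than first expanding $\hep=Q\ep$ in the iid entries of $\ep$, which is a mild shortcut but not a genuinely different argument.
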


See Section \ref{dfpdf} for a proof. 

We call the inference method based on approximating 
$(\hat{\beta}_i- \beta_i)/\hat{V}_i^{1/2}$
by
a $t$-distribution with the degrees of freedom specified by \eqref{eq:d_formula} the \emph{Hadamard-t} method.
This result also leads to a useful \emph{degrees of freedom} heuristic. If the degrees of freedom $d_i$ are large, this suggests that inferences for $\beta_i$ are based on a large amount of information. On the other hand, if the degrees of freedom are small, this suggests that the inferences are based on little information, and may thus be unstable. 

In our case, the $t$-distribution is still a heuristic, because the numerator and denominator are not independent under heteroskedasticity. However, the degree of dependence can be bounded as follows:
\begin{eqnarray}
\|\Cov{\hbeta,\hep}\|_{\op} &=  \|S\Sigma (S^\top X^\top - I)\|_{\op}  = \|S(\Sigma - cI) (S^\top X^\top - I)\|_{\op} \nonumber\\
 &\le   \|S\|_{\op}\| \Sigma - cI \|_{\op} \| S^\top X^\top - I\|_{\op} \le \frac{|\Sigma_{\max} - \Sigma_{\min}|}{2\sigma_{\min}(X)}.\label{covbd}
\end{eqnarray}
In the first line, we have used that $S = (X^\top X)^{-1} X^\top$. Hence, $S( S^\top X^\top - I) = 0$. Indeed, 
$$
SS^\top X^\top = (X^\top X)^{-1} X^\top X (X^\top X)^{-1} X^\top = (X^\top X)^{-1} X^\top = S.
$$
For this reason, we can add a constant times $S( S^\top X^\top - I) = 0$ in the second step. Then, we can use the inequality $\|AB\|_{\op} \le \|A\|_{\op} \|B\|_{\op}$ for any two conformable matrices $A,B$.

In \eqref{covbd}, we have chosen $c = (\Sigma_{\max} + \Sigma_{\min})/2$, where $\Sigma_{\max}$ and $\Sigma_{\min}$ denote the maximal and minimal entries of $\Sigma$, respectively. Moreover, we have also used that $\|S\|_{\op} = 1/\sigma_{\min}(X)$, while $\| S^\top X^\top - I\|_{\op} = \| X (X^\top X)^{-1} X^\top - I\|_{\op} \le 1$.

Now, for designs $X$ of aspect ratios $n \times p$ that are not close to 1, and with i.i.d.~entries with sufficiently many moments, it is known that $\sigma_{\min}(X)$ is of the order $n^{1/2}$. This suggests that the covariance between $\hbeta$ and $\hep$ is small. Hence, this heuristic suggests that the $t$-approximation should be accurate. Moreover when $\hV_j - V_j  \to 0$ in probability, and under the conditions in Section \ref{sec:norm}, we also have that the limiting distribution is standard normal.

\subsection{Hadamard estimator with 
\texorpdfstring{$p = 1$}{p = 1}}
\label{sec:hadam-estim-with}
As a simple example, consider the case of one covariate, when $p=1$. In this case, we have $Y = X\beta + \ep$, where $y,X,\ep$ are $n$-vectors. Assuming without loss of generality that $X^\top X = 1$, the OLS estimator takes the form $\hbeta = X^\top y$. Its variance equals $V = \sum_{j=1}^n X_j^2 \Sigma_{j}$, where $\Sigma_j$ is the variance of $\ep_j$, and $X_j$ are the entries of $X$. 

The Hadamard estimator takes the form 
$$\hV = \frac{ \sum_{j=1}^n  \frac{X_j^2}{1-2X_j^2} \hep_j^2}{1+\sum_{j=1}^n  \frac{ X_j^4}{1-2X_j^2}},$$
which is well-defined if all coordinates $X_j^2$ are small enough that $1-2X_j^2>0$. See section \ref{p1pf} for the argument. The unbiased estimator is not always nonnegative. To ensure nonnegativity, we need $X_j^2 < 1/2$ in this case. In practice, we may enforce non-negativity by using $\max(\hV,0)$ instead of $\hV$, but see below for a more thorough discussion.

For comparison, White's variance estimator is
$\hV_{\mathrm{W}} = \sum_{j=1}^n X_j^2  \hep_j^2,$
while MacKinnon-White's variance estimator \citep{mackinnon1985some} can be seen to take the form
$$\hV_{\mathrm{MW}} = \sum_{j=1}^n \frac{X_j^2}{1-X_j^2}  \hep_j^2 = \sum_{j=1}^n \frac{X_j^2}{ \sum_{i=1, i\neq j}^nX_i^2}  \hep_j^2.$$

We observe that each variance estimator is a weighted linear combination of the squared residuals, where the weights are some functions of the squares of the entries of the feature vector $X$. For White's estimator, the weights are simply the squared entries. For MacKinnon-White's variance estimator, the weights are scaled up by a factor $1/(1-X_j^2)>1$. As we know, this ensures the estimator is unbiased under homoskedasticity. For the Hadamard estimator, the weights are scaled up more aggressively by $1/(1-2X_j^2)>1$, and there is an additional normalization step. In general, these weights do not have to be larger---or smaller---than those of the other two weighting schemes. 

A critical issue is that \emph{the Hadamard estimator may not always be non-negative}. It is well known that unbiased estimators may fall outside of the parameter space \citep{lehmann1998theory}. When $p=1$, almost sure non-negativity is ensured when the coordinates of $X$ are sufficiently small. It would be desirable, but seems non-obvious, to obtain such results for general dimension $p$. 

In addition, the degrees of freedom from \eqref{eq:d_formula} simplifies to
$d = 1+1/\left(\sum_{j=1}^n  \frac{ X_j^4}{1-2X_j^2}\right).
$
This can be as large as $n-1$, for instance $d=n-1$ when all $X_i^2=1/n$. The degrees of freedom can only be small if the distribution of $X_i^2$ is very skewed.

\subsection{Bias of classical estimators}
\label{sec:bias-class-estim}
As a byproduct of our analysis, we also obtain explicit formulas for the bias of the two classical estimators of the variances of the ordinary least-squares estimator, namely the White and MacKinnon-White estimators. This can in principle enable us to understand when the bias is small or large.

The estimator proposed by \cite{mackinnon1985some}, which we will call the \emph{MW estimator}, is:
\beq
\label{mw}
\hC_{\mathrm{MW}} =(X^\top X)^{-1}[X^\top \hSigma_{\mathrm{MW}} X] (X^\top X)^{-1},
\eeq
where $\hSigma_{\mathrm{MW}} = \diag(Q)^{-1} \diag(\hep)^2$. This estimator is unbiased under homoskedasticity, that is, $\Sigma = \sigma^2 I_n$. It is denoted as HC2 in the paper \cite{mackinnon1985some}. The same estimator was also proposed by \cite{wu1986jackknife}, equation (2.6).

\begin{proposition}[Bias of classical estimators]
Consider White's covariance estimator defined in \eqref{w} and MacKinnon-White's estimator defined in \eqref{mw}. Their bias for estimating the coordinate-wise variances of the OLS estimator equals, respectively
\beq\label{bw}
b_{\mathrm{W}} = (S\odot S)  [(Q \odot Q)-I_n] \vec\Sigma
\eeq
for White's covariance estimator, and
\beq\label{bmw}
b_{\mathrm{MW}} = (S\odot S)  [\diag(Q)^{-1}(Q \odot Q)-I_n] \vec\Sigma
\eeq
for MacKinnon-White's estimator. Here $\vec\Sigma$ is the vector of diagonal entries of $\Sigma$, the covariance of the noise.
\label{bc}
\end{proposition}

See Section \ref{pf:bc} for a proof.
In particular, MacKinnon-White's estimator is known to be unbiased
under homoskedasticity, that is when $\Sigma = I_n$ \citep{mackinnon1985some}. This can be checked easily using our explicit formula for the bias. Specifically suppose that $\Sigma = I_n$. Then, $\vec\Sigma = 1_n$, the vector of all ones. Therefore, $(Q \odot Q) \vec\Sigma  = \vect(\|q_j\|^2)$, the vector of squared Euclidean norms of the rows of $Q$. Since $Q$ is a projection matrix, $Q^2=Q$, so $\|q_j\|^2 = Q_{jj}$. Therefore we see that 
$$ [\diag(Q)^{-1}(Q \odot Q)-I_n] \vec\Sigma = \diag(Q)^{-1} \vect(Q_{jj}) - 1_n = 0,$$
so that MacKinnon-White's estimator is unbiased under  homoskedasticity.

\subsection{Some related work}  
\label{relw}

There has been a lot of related work on inference in linear models under heteroskedasticity. Here we can only mention a few of the most closely related works, and refer to \cite{imbens2016robust} for a review. In the low-dimensional case, \cite{bera2002some} compared the Hadamard and White-type estimators and discovered that the Hadamard estimator lead to more accurate coverage, while the White estimators have better mean squared error.

As a heuristic to improve the performance of the MacKinnon-White (MW) confidence intervals in high dimensions, \cite{bell2002bias} have a similar approach to ours, with a $t$ degrees of freedom correction. Simulations in the very recent review paper by \cite{imbens2016robust} 
suggest this method is the state of the art for heteroskedasticity-consistent inference, and performs well under many settings.  However, this correction is computationally more burdensome than the MW method, because it requires a separate $O(p^3)$ computation for each regression coefficient, raising the cost to $O(p^4)$. In contrast, our method has computational cost $O(p^3)$ only. In addition, the accuracy of their method  typically does not increase substantially compared to the MW method. We think that this could be due to the bias of the MW method under heteroskedasticity.

In this work, we have used the term ``robust'' informally to mean insensitivity to assumptions about the covariance of the noise. Robust statistics is a much larger field which classically studies robustness to outliers in the data distribution \citep[e.g.,][]{huber2011robust}. Recent work has focused, among many other topics, on high-dimensional regression and covariance estimation 
 \cite[e.g.,][etc]{el2013robust,chen2016general,donoho2016high,zhou2018new,diakonikolas2017being}.

\section{Existence of Hadamard estimator}
\label{sec:exist-hadam-estim}

In this section we develop the novel proof of the existence of the
Hadamard estimator. We begin by observing that Theorem \ref{thm:measure_0} is equivalent to the proposition below. This is because the Lebesgue measure admits an orthogonal decomposition using the SVD.
\begin{proposition}\label{prop:hadamard_full}
Assume $r(r+1)/2 \ge n$. Denote by $\mathcal{Q}$ the set of all $n \times n$ projection matrices of rank $r$ and let $\d \mathcal{Q}$ be the Lebesgue measure on $\mathcal{Q}$. Then, the set
$\{Q \in \mathcal{Q}: \rank(Q \odot Q) < n\}$
has zero-$\d\mathcal{Q}$ measure.
\end{proposition}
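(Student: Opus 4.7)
The plan is to exploit the fact that $\det(Q \odot Q)$ is a polynomial in the entries of $Q$, together with the standard principle that the zero locus of a nontrivial polynomial on a smooth connected manifold is negligible with respect to any smooth measure. I would parametrize $\mathcal{Q}$ by the smooth surjection $\phi: X \mapsto X(X^\top X)^{-1} X^\top$ from the open set $U \subset \R^{n \times r}$ of full-column-rank matrices onto $\mathcal{Q}$. The entries of $Q(X)$ are rational in $X$ with denominator $\det(X^\top X)$, so
\[
f(X) := \det(X^\top X)^{n} \cdot \det\bigl(Q(X) \odot Q(X)\bigr)
\]
is a genuine polynomial in the $nr$ real entries of $X$. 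Because $\phi$ is a submersion (its fibers are the $GL_r$-orbits of right multiplication, each of dimension $r^2$, matching $nr - r^2 = \dim \mathcal{Q}$), preimages of $\d\mathcal{Q}$-null sets are Lebesgue-null in $U$ and vice versa. Thus the proposition reduces to showing that $f$ is not identically zero on $\R^{n \times r}$.

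This in turn amounts to exhibiting a single $Q_0 \in \mathcal{Q}$ with $Q_0 \odot Q_0$ of rank $n$. I would argue by induction on $n$, allowing $r$ to vary subject to the standing constraint $r(r+1)/2 \ge n$. The base case $n = r$ is immediate: $Q_0 = I_n$ and $Q_0 \odot Q_0 = I_n$. For the inductive step, I would take a known good $(n, r')$-projection $Q$ and enlarge its range from a subspace $V \subset \R^n$ to a subspace $V' \subset \R^{n+1}$, either keeping its dimension at $r'$ or increasing it to $r' + 1$ depending on whether $r'(r'+1)/2$ already exceeds $n+1$ or another unit of rank is needed. Parametrizing $V'$ by adjoining a generic row to a basis matrix $X$ of $V$ (and optionally a new column), the entries of the new projection $Q'$ become explicit algebraic functions of the freshly appended parameters, and $\det(Q' \odot Q')$ becomes a polynomial in those parameters.

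The main obstacle is verifying that this polynomial in the new parameters is not identically zero. A Schur-complement / cofactor expansion of $\det(Q' \odot Q')$ along the appended row and column should produce a leading term involving $\det(Q \odot Q)$, which is nonzero by the inductive hypothesis, multiplied by a factor capturing how the new direction interacts with the old rows. The delicate work is tracking this expansion algebraically and confirming that the leading term does not cancel against lower-order terms for every choice of the free parameters; the counting constraint $r(r+1)/2 \ge n$ should enter precisely to ensure that enough degrees of freedom in the new row remain to escape the vanishing locus. Once the algebraic non-vanishing is established, a generic choice of the new row delivers the required $Q_0$ and closes the induction, proving the proposition.
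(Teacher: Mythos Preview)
Your reduction to the nontrivial-polynomial dichotomy is correct and is exactly what the paper does: parametrize $\mathcal{Q}$ by full-rank matrices, observe that $\det(Q\odot Q)$ is a ratio of polynomials, and conclude that it suffices to exhibit one $Q_0$ with $Q_0\odot Q_0$ nonsingular. (A cosmetic point: the denominator clears with $\det(X^\top X)^{2n}$, not $\det(X^\top X)^{n}$, since each entry of $Q\odot Q$ carries $\det(X^\top X)^{-2}$.)

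Where you diverge from the paper is in the construction of that single $Q_0$. You propose inducting directly on projections, appending a row (and possibly a column) to the basis matrix $X$ and tracking $\det(Q'\odot Q')$ through the perturbation. The difficulty you yourself flag is real: adding a row to $X$ changes $(X^\top X)^{-1}$ and hence \emph{every} entry of $Q'$, so the upper-left $n\times n$ block of $Q'\odot Q'$ is only $Q\odot Q$ to leading order. The lowest-order term in the new parameters is $\det(Q\odot Q)$ times a quartic form in the appended row, and showing this quartic is not identically zero is a second nontrivial step that depends on the particular $Q$ carried through the induction. Your sketch does not close this, and it is not obvious how the constraint $r(r+1)/2\ge n$ feeds in at that point.

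The paper sidesteps this entirely by first \emph{decoupling} from the projection constraint. It proves two short linear-algebra facts: (i) if $Q=\sum_i u_iu_i^\top$, then $\rank(Q\odot Q)$ equals the rank of the family $\{u_i\odot u_j:1\le i\le j\le r\}$; and (ii) this rank is invariant under replacing $u_1,\dots,u_r$ by any linearly equivalent system. Together these mean one may forget orthonormality and simply ask: do there exist \emph{arbitrary} vectors $u_1,\dots,u_r\in\R^n$ whose $\binom{r+1}{2}$ pairwise Hadamard products span $\R^n$? For this unconstrained question, induction on $n$ is clean: pick any $n$ of the products, expand their determinant along the first coordinate, and the cofactor is a determinant of the same type in $\R^{n-1}$, generically nonzero by induction. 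The counting $\binom{r+1}{2}\ge n$ enters only to guarantee there are $n$ columns to pick. Gram--Schmidt at the end recovers a genuine projection $Q_0$. This decoupling is the key idea you are missing; it converts the ``delicate'' perturbation you anticipate into a one-line cofactor expansion.
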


We take the following lemma as given for the moment.
\begin{lemma}\label{lm:exists}
Under the same assumptions as Proposition \ref{prop:hadamard_full}, there exists a $Q^\ast \in \mathcal{Q}$ such that
$\rank(Q^\ast \odot Q^\ast) = n$.
\end{lemma}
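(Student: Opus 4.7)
The plan is to parametrize $Q \in \mathcal{Q}$ by a factorization $Q = UU^\top$ with $U \in \mathbb{R}^{n \times r}$ satisfying $U^\top U = I_r$, and write $u_1, \ldots, u_n \in \mathbb{R}^r$ for the rows of $U$. Then $(Q \odot Q)_{ij} = \langle u_i, u_j \rangle^2 = \langle u_i u_i^\top, u_j u_j^\top \rangle_F$, so $Q \odot Q$ is precisely the Gram matrix of the $n$ symmetric rank-one matrices $u_1 u_1^\top, \ldots, u_n u_n^\top$ inside the inner-product space $\mathrm{Sym}^2(\mathbb{R}^r)$ of symmetric $r \times r$ matrices, which has dimension $r(r+1)/2$. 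Consequently, $\rank(Q \odot Q) = n$ is equivalent to $\{u_i u_i^\top\}_{i=1}^n$ being linearly independent in $\mathrm{Sym}^2(\mathbb{R}^r)$, and the hypothesis $r(r+1)/2 \ge n$ is exactly what leaves room for this.

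Next I would reduce the construction to an unconstrained existence problem. Given any vectors $v_1, \ldots, v_n \in \mathbb{R}^r$ whose linear span is all of $\mathbb{R}^r$, the matrix $M := \sum_i v_i v_i^\top$ is positive definite; setting $u_i := M^{-1/2} v_i$ yields $\sum_i u_i u_i^\top = M^{-1/2} M M^{-1/2} = I_r$, so the matrix $U$ with rows $u_i$ satisfies $U^\top U = I_r$ and $Q^\ast := UU^\top$ lies in $\mathcal{Q}$. Since $u_i u_i^\top = M^{-1/2}(v_i v_i^\top) M^{-1/2}$ and the map $A \mapsto M^{-1/2} A M^{-1/2}$ is a linear automorphism of $\mathrm{Sym}^2(\mathbb{R}^r)$, linear independence of $\{v_i v_i^\top\}$ transfers to $\{u_i u_i^\top\}$. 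It therefore suffices to exhibit spanning vectors $v_1, \ldots, v_n \in \mathbb{R}^r$ with linearly independent outer products $v_i v_i^\top$.

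For the explicit construction I would take $v_i = e_i$ for $i = 1, \ldots, r$, which supplies the spanning property and contributes the $r$ linearly independent matrices $E_{ii} := e_i e_i^\top$. For the remaining $n - r$ indices I would choose vectors of the form $v_{r+k} = e_{a_k} + e_{b_k}$ for $n - r$ distinct pairs $a_k < b_k$ drawn from $\{1, \ldots, r\}$. There are $\binom{r}{2} = r(r-1)/2$ such pairs, and the hypothesis $n \le r(r+1)/2$ is equivalent to $n - r \le r(r-1)/2$, so enough pairs are available. Each $v_{r+k} v_{r+k}^\top = E_{a_k a_k} + E_{b_k b_k} + E_{a_k b_k} + E_{b_k a_k}$ differs from an element of $\mathrm{span}\{E_{ii}\}$ by the off-diagonal term $E_{a_k b_k} + E_{b_k a_k}$; these off-diagonal terms are distinct for distinct pairs, hence linearly independent. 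Combining, the full collection $\{v_i v_i^\top\}_{i=1}^n$ is linearly independent, and the reduction above produces the desired $Q^\ast$.

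The only subtle point, rather than a genuine obstacle, is that the Stiefel constraint $U^\top U = I_r$ and the independence of the rank-one squares must be arranged simultaneously. The substitution $u_i = M^{-1/2} v_i$ decouples them cleanly: the orthonormality of the columns of $U$ becomes a pure normalization, while the linear independence is a property robust under the invertible symmetric conjugation $A \mapsto M^{-1/2} A M^{-1/2}$. The counting in the final step is tight, matching the lower bound from Proposition \ref{prop:known}.
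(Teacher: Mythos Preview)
Your proof is correct and takes a genuinely different route from the paper's. The paper works with the \emph{columns} of a factorization: writing $Q = \sum_{l=1}^r v_l v_l^\top$ for orthonormal $v_1,\ldots,v_r \in \mathbb{R}^n$, it introduces the quantity $\rank^{\odot}(v_1,\ldots,v_r)$ (the rank of the family $\{v_i \odot v_j\}_{i\le j}$ in $\mathbb{R}^n$), shows it equals $\rank(Q\odot Q)$, proves it is invariant under change of basis (Gram--Schmidt), and then establishes existence by an \emph{induction/genericity} argument on the determinant of an $n\times n$ submatrix of Hadamard products. You instead work with the \emph{rows} $u_1,\ldots,u_n \in \mathbb{R}^r$ and observe that $Q\odot Q$ is the Gram matrix of the rank-one tensors $u_i u_i^\top$ in $\mathrm{Sym}^2(\mathbb{R}^r)$, which immediately explains the threshold $r(r+1)/2 = \dim \mathrm{Sym}^2(\mathbb{R}^r)$ and reduces the question to linear independence of symmetric matrices. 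Your $M^{-1/2}$ normalization plays the same decoupling role as the paper's Gram--Schmidt plus linear-equivalence lemma, but on the dual side. The payoff of your approach is an explicit, verifiable construction ($e_i$ and $e_a+e_b$) in place of a non-constructive polynomial argument; the paper's approach yields a slightly stronger byproduct---that full rank holds \emph{generically}, not merely for one $Q^\ast$---though for Lemma~\ref{lm:exists} only existence is needed, and genericity is recovered separately in the proof of Proposition~\ref{prop:hadamard_full} anyway.
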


A proof of Proposition \ref{prop:hadamard_full} using Lemma \ref{lm:exists} is readily given as follows.
\begin{proof}[Proof of Proposition \ref{prop:hadamard_full}]
Let $p = n - r$. Consider the map from $\R^{n \times p}$ (ignoring the zero-Lebesgue measure set where $X$ is not of rank $p$) to $\mathcal Q$:
\[
X \in \R^{n \times p} \longrightarrow Q = I - X(X^\top X)^{-1} X^\top \in \mathcal Q.
\]
It is easy to see that the map is a surjection and the preimage of this map for every $Q \in \mathcal{Q}$ is rotationally equivalent to each other. Hence, it suffices to show that the set of $X$ where the Hadamard product of $I - X(X^\top X)^{-1} X^\top$ is degenerate is measure zero.

We observe that the determinant takes the form
\[
\det\left( (I - X(X^\top X)^{-1} X^\top) \odot (I - X(X^\top X)^{-1} X^\top) \right) = \frac{f_1(X)}{f_2(X)},
\]
where $f_1(X)$ and $f_2(X)$ are polynomials in  the $np$ variables $X_{ij}, 1\le i \le n, 1\le j \le p$. As a fundamental property of polynomials, one and exactly one of the following two cases holds:
\begin{enumerate}
\item[(a)] The polynomial $f_1(X) \equiv 0$ for all $X$.
\item[(b)] The roots of $f_1(X)$ are of zero Lebesgue measure.
\end{enumerate}

Lemma \ref{lm:exists} falsifies case (a). Therefore, case (b) must hold. Recognizing that the set of $X$ where the Hadamard product of $Q(X)$ is not full rank is a subset of the roots of $f_1(X)$, case (b) confirms the claim of the present lemma.


\end{proof}

Now we turn to prove Lemma \ref{lm:exists}. For convenience, we adopt the following definition.
\begin{definition}\label{def:o_rank}
For a set of vectors $u_1, \ldots, u_r \in \R^n$, write $\rank^{\odot}(u_1, \ldots, u_r)$ the rank of the $r(r+1)/2$ vectors each taking the form $u_i \odot u_j$ for $1 \le i \le j \le r$.
\end{definition}
First, we give two simple lemmas.
\begin{lemma}\label{lm:equi}
Suppose two sets of vectors $\{u_1, u_2, \ldots, u_{r}\}$ and $\{u'_1, u'_2, \ldots, u'_{r'}\}$ are linearly equivalent, meaning that one can be linearly represented by the other. Then,
\[
\rank^{\odot}(u_1, \ldots, u_r) = \rank^{\odot}(u'_1, \ldots, u'_{r'}).
\]
\end{lemma}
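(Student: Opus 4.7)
The plan is to show that the linear spans of the two families of Hadamard products are in fact equal, which immediately yields equality of their dimensions. The only tool I need is the bilinearity (and commutativity) of the Hadamard product together with the hypothesis that each $u'_i$ is expressible as a linear combination of the $u_k$'s and vice versa.

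Concretely, suppose $u'_i = \sum_{k=1}^{r} a_{ik} u_k$ for each $1 \le i \le r'$. Using bilinearity of $\odot$, for any $1 \le i \le j \le r'$ I would expand
\[
u'_i \odot u'_j = \sum_{k,l=1}^{r} a_{ik} a_{jl}\, (u_k \odot u_l) = \sum_{k=1}^{r} a_{ik} a_{jk}\, (u_k \odot u_k) + \sum_{1 \le k < l \le r}(a_{ik} a_{jl} + a_{il} a_{jk})\, (u_k \odot u_l),
\]
where I have used commutativity of $\odot$ to combine the $(k,l)$ and $(l,k)$ terms so that only the canonical index pairs $k \le l$ from Definition \ref{def:o_rank} appear. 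This proves
\[
\operatorname{span}\{u'_i \odot u'_j : 1 \le i \le j \le r'\} \subseteq \operatorname{span}\{u_k \odot u_l : 1 \le k \le l \le r\}.
\]
Applying exactly the same expansion in the other direction, using the assumption that each $u_k$ is a linear combination of the $u'_i$'s, gives the reverse inclusion. Hence the two spans coincide, and their dimensions, which are $\rank^{\odot}(u'_1,\ldots,u'_{r'})$ and $\rank^{\odot}(u_1,\ldots,u_r)$ respectively, are equal.

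There is no real obstacle here: the only detail worth highlighting is the bookkeeping when rewriting the double sum in terms of the indexed-by-$k\le l$ basis used in Definition \ref{def:o_rank}, which is handled by the commutativity of $\odot$. No algebraic identity beyond bilinearity of $\odot$ is needed, and the finite-dimensionality of the ambient space $\R^n$ ensures the ranks are well-defined integers that must agree once the spans coincide.
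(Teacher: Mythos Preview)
Your proof is correct and follows essentially the same argument as the paper: expand $u'_i \odot u'_j$ via bilinearity of the Hadamard product to show one span is contained in the other, then invoke the symmetric hypothesis for the reverse inclusion. The only difference is that you make the bookkeeping with the commutativity of $\odot$ more explicit, which is a harmless refinement.
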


\begin{lemma}\label{lm:rank_identity}
For any matrix $P$ that takes the form $P = u_1 u_1^\top + \ldots + u_r u_r^\top$ for some vectors $u_1, \ldots, u_r$, we have
\[
\rank(P \odot P) = \rank^{\odot} (u_1, \ldots, u_r).
\]
\end{lemma}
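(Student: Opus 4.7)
The plan is to expand $P \odot P$ using bilinearity of the Hadamard product and recognize it as a sum of symmetric rank-one terms whose column space is exactly the span of the vectors $u_i \odot u_j$.

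First I would expand
\[
P \odot P = \left(\sum_{i=1}^r u_i u_i^\top\right) \odot \left(\sum_{j=1}^r u_j u_j^\top\right) = \sum_{i,j=1}^r (u_i u_i^\top) \odot (u_j u_j^\top),
\]
using that the Hadamard product distributes over sums. The next step is the one concrete computation: for each pair $(i,j)$, the $(k,\ell)$ entry of $(u_i u_i^\top) \odot (u_j u_j^\top)$ equals $u_i(k) u_i(\ell) u_j(k) u_j(\ell) = (u_i \odot u_j)(k) \cdot (u_i \odot u_j)(\ell)$, so
\[
(u_i u_i^\top) \odot (u_j u_j^\top) = (u_i \odot u_j)(u_i \odot u_j)^\top.
\]

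Substituting back gives $P \odot P = \sum_{i,j=1}^r (u_i \odot u_j)(u_i \odot u_j)^\top$, which is a positive semidefinite matrix written as a sum of rank-one PSD terms. A standard fact about such Gram-type representations is that the rank equals the dimension of the span of the generating vectors. Hence
\[
\rank(P \odot P) = \dim \mathrm{span}\{u_i \odot u_j : 1 \le i, j \le r\}.
\]
Because the Hadamard product is symmetric, $u_i \odot u_j = u_j \odot u_i$, so restricting to $1 \le i \le j \le r$ yields the same span. By Definition \ref{def:o_rank} this dimension is exactly $\rank^\odot(u_1, \ldots, u_r)$, which completes the argument.

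The proof is essentially a direct calculation, so I do not anticipate a real obstacle; the only thing to be careful about is the entrywise identity $(uu^\top) \odot (vv^\top) = (u \odot v)(u \odot v)^\top$ and the observation that this lets the Hadamard square of a sum of rank-ones be re-expressed as a sum of rank-ones indexed by Hadamard products of the original generators.
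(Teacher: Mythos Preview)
Your proof is correct and follows essentially the same approach as the paper: expand $P\odot P$ bilinearly into $\sum_{i,j}(u_i\odot u_j)(u_i\odot u_j)^\top$, recognize this as $RR^\top$ for the matrix $R$ whose columns are the $u_i\odot u_j$, and conclude via $\rank(RR^\top)=\rank(R)$ together with the symmetry $u_i\odot u_j=u_j\odot u_i$. The paper's version is slightly terser in that it asserts the rank-one expansion without the entrywise verification you supply, but the argument is the same.
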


Making use of the two lemmas above, Lemma \ref{lm:exists} is validated once we show the following.
\begin{lemma}\label{lm:unrestricted}
There exist vectors $u_1, \ldots, u_r$ such that
$\rank^{\odot}(u_1, \ldots, u_r) = n$
if $r(r+1)/2 \ge n$. 
\end{lemma}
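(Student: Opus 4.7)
The plan is to produce an explicit construction of vectors $u_1,\ldots,u_r \in \R^n$ achieving $\rank^{\odot}(u_1,\ldots,u_r) = n$. Note that the condition "$\rank^{\odot}=n$" asserts the non-vanishing of some $n\times n$ minor of the matrix whose columns are the Hadamard products $u_i \odot u_j$, so it is a Zariski-open condition on $(u_1,\ldots,u_r)\in (\R^n)^r$ and exhibiting a single example suffices.

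The construction uses a generalized Vandermonde design. Pick non-negative integer exponents $a_1,\ldots,a_r$ whose pairwise sums $a_i+a_j$ for $1\le i\le j\le r$ are all distinct; a concrete choice is $a_i = 2^{i-1}$, since the binary expansion of $a_i+a_j$ uniquely recovers $\{i,j\}$. This yields $r(r+1)/2$ distinct exponents. Next, pick $n$ distinct positive reals $t_1,\ldots,t_n$ and set
\[
u_i := \bigl(t_1^{a_i},\, t_2^{a_i},\,\ldots,\, t_n^{a_i}\bigr)^\top \in \R^n.
\]
Then $u_i \odot u_j = (t_k^{a_i+a_j})_{k=1}^n$, so the $r(r+1)/2$ Hadamard products are the columns of an $n \times r(r+1)/2$ matrix $M$, each column being a Vandermonde-style vector with its own distinct exponent from the set of sums.

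Since $r(r+1)/2 \ge n$, pick any $n$ of those distinct sums, say $b_1 < b_2 < \cdots < b_n$, and consider the corresponding $n \times n$ submatrix $\bigl[t_k^{b_\ell}\bigr]_{k,\ell=1}^n$. This is a generalized Vandermonde matrix with distinct positive nodes and distinct real exponents, and its determinant is strictly positive by a classical result (a consequence of the total positivity of the exponential kernel; see e.g.~Karlin or Gantmacher). Hence this submatrix is invertible, $M$ has rank $n$, and $\rank^{\odot}(u_1,\ldots,u_r)=n$ as required. The only nontrivial ingredient is the classical non-vanishing of the generalized Vandermonde determinant; the main conceptual step is the Sidon-style choice of exponents $a_i$, which turns each Hadamard product $u_i\odot u_j$ into a monomial in the $t_k$'s with a distinct exponent, thereby reducing the $\odot$-rank problem to the known Vandermonde situation.
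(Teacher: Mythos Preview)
Your proof is correct and takes a genuinely different route from the paper's. The paper argues by induction on $n$: it shows that for \emph{generic} $u_1,\ldots,u_r$, any $n$ of the Hadamard products $u_i\odot u_j$ are linearly independent, by expanding the relevant $n\times n$ determinant along its first column and observing that the leading minor is an $(n-1)\times(n-1)$ determinant of the same type built from the truncated vectors $u_i^{(-1)}\in\R^{n-1}$, which is generically nonzero by the inductive hypothesis. This yields a self-contained polynomial-algebraic argument that actually proves more than the lemma states (genericity, not just existence), at the cost of being non-constructive.

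Your approach, by contrast, is explicit: the Sidon-type exponents $a_i=2^{i-1}$ force all pairwise sums $a_i+a_j$ to be distinct, so the Hadamard products become columns of a generalized Vandermonde matrix, and you finish with the classical total-positivity/Chebyshev-system fact that $\det[t_k^{b_\ell}]$ never vanishes for distinct positive nodes and distinct exponents. The trade-off is clean: you import one standard external lemma (generalized Vandermonde nonsingularity) and in return get a concrete witness and a shorter proof. One minor remark: your opening sentence about Zariski-openness is correct but unnecessary here, since the lemma only asks for existence and your construction delivers it directly; that observation would matter if you wanted to upgrade to the paper's generic conclusion, which indeed follows immediately once you have one example.
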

To see this point, we apply the Gram--Schmidt orthonormalization to $u_1, \ldots, u_r$ considered in Lemma \ref{lm:unrestricted}, and get orthonormal vectors $v_1, \ldots, v_r$. Write $Q^\ast = v_1 v_1^\top + \ldots + v_r v_r^\top$, which belongs to $\mathcal{Q}$. Since $u_1, \ldots, u_r$ and $v_1, \ldots, v_r$ are linearly equivalent, Lemmas \ref{lm:equi} and \ref{lm:rank_identity} reveal that
\[
\rank(Q^\ast \odot Q^\ast) = \rank^{\odot}(v_1, \ldots, v_r) = \rank^{\odot}(u_1, \ldots, u_r) = n.
\]

Now we aim to prove Lemma \ref{lm:unrestricted}.
\begin{proof}[Proof of Lemma \ref{lm:unrestricted}]

We consider a stronger form of Lemma \ref{lm:unrestricted}: for
\textit{generic} $u_1, \ldots, u_r$, any combination of $n$ vectors
from $u_i \odot u_j$ for $1 \le i \le j \le r$ have full rank. Here
\textit{generic} means that this statement does not hold only for a set of zero Lebesgue measure.

We induct on $n$. The statement is true for $n = 1$. Suppose it has been proven true for $n - 1$. Let $\mathcal{U}$ denote an arbitrary subset of $\{(i, j): 1 \le i \le j \le r\}$ with cardinality $n$. Write
$P = (u_i \odot u_j)_{(i,j) \in \mathcal U}$.

It is sufficient to show that $\det(P)$ is generically nonzero. As earlier in the proof of Proposition \ref{prop:hadamard_full}, it suffices to show that $\det(P)$ is not \textit{always} zero. Without loss of generality, let $(i_0, j_0) \in \mathcal{U}$ be the first column of $P$. Expressing the determinant of $P$ in terms of its minors along the first column, we see that $\det(P)$ is an affine function of $u_{i_0}(1) u_{j_0}(1)$, with the leading coefficient being the determinant of a $(n - 1) \times (n - 1)$ minor matrix that results from $P$ by removing the first row and the first column. The induction step is complete if we show that this minor matrix, denoted by $P_{1,1}$ is nonzero generically. Write $u_i^{(-1)}$ the vector in $\R^{n-1}$ formed by removing the first entry from $u_i$ for $i = 1, \ldots, r$. Then, each of the $n-1$ column of $P_{1,1}$ takes the form $u_i^{(-1)} \odot u_j^{(-1)}$ for some $(i, j) \in \mathcal U \setminus \{(i_0, j_0)\}$. Since the induction step has been validated for $n-1$, it follows that the determinant of $P_{1,1}$ is nonzero in the generic sense.

\end{proof}

To complete this section, we prove below Lemmas \ref{lm:equi} and \ref{lm:rank_identity}.
\begin{proof}[Proof of Lemma \ref{lm:equi}]
Since $\{u'_1, u'_2, \ldots, u'_{r'}\}$ can be linearly represented by $\{u_1, u_2, \ldots, u_{r}\}$, each $u'_j$ can be written as
$u'_j = \sum_{l=1}^r a_l^j u_l$
for constants $a_l^j$. Using the representation, the Hadamard product between two vectors reads
\[
\begin{aligned}
u'_i \odot u'_j &= \left( \sum_{l=1}^r a_l^i u_l \right) \odot \left( \sum_{l=1}^r a_l^j u_l \right)
                      = \sum_{l_1, l_2} a_{l_1}^i a_{l_2}^j u_{l_1} \odot u_{l_2}.
\end{aligned}
\]
This expression for $u'_i \odot u'_j$ suggests that $u'_i \odot u'_j$ is in the linear span of $u_{l _1} \odot u_{l_2}$ for $1 \le l_1 \le l_2 \le r$. As a consequence of this, it must hold that
\[
\begin{aligned}
\rank^{\odot}(u'_1, u'_2, \ldots, u'_{r'}) &\equiv \rank(\{u'_i \odot u'_j: 1 \le i \le j \le r'\})\\
& \le \rank(\{u_{l_1} \odot u_{l_2}: 1 \le l_1 \le l_2 \le r\})
 = \rank^{\odot}(u_1, u_2, \ldots, u_{r}).
\end{aligned}
\]

Likewise, we have $\rank^{\odot}(u'_1, u'_2, \ldots, u'_{r'}) \ge \rank^{\odot}(u_1, u_2, \ldots, u_{r})$. Taking the two inequalities together leads to an identity between the two ranks.

\end{proof}

\begin{proof}[Proof of Lemma \ref{lm:rank_identity}]
As earlier in this section, we can write $P$ as
\[
P \odot P = \sum_{1 \le i, j \le r} (u_i \odot u_j) (u_i \odot u_j)^\top.
\]
Let $R$ be an $n \times r^2$ matrix formed by the $r^2$ columns $u_i \odot u_j$ for $1 \le i, j \le n$. Clearly, $\rank(P \odot P) = \rank(R)$ since $P \odot P = R R^\top$. The (column) rank of $R$ is  $\rank^{\odot}(u_1, \ldots, u_r)$ by Definition \ref{def:o_rank}, as $u_i \odot u_j = u_j \odot u_i$). Hence, $\rank(P \odot P) = \rank^{\odot} (u_1, \ldots, u_r)$.

\end{proof}

\section{Rate of Convergence}
\label{rate}
We now turn to studying the rates of convergence of estimators analyzed in this paper.
For this, we need to introduce some additional notation.
We use $O(\cdot)$ and $o(\cdot)$ for the standard big-O and little-o notation. 
For two positive sequences $(a_n)_{n\ge 1}$, $(b_n)_{n\ge 1}$, we say $a_n = \Omega(b_n)$ if there exists a universal positive constant $c$ such that $a_n/b_n>c$.
The condition number of a square matrix $B$ is denoted by $\kappa(B)$. Convergence in probability and convergence in distribution are denoted as $\to_P$ and $\Rightarrow$, respectively.

We next give a fundamental result characterizing the sampling properties of the Hadamard estimator. This result bounds the relative error for estimating the vector of variances of all the entries of the OLS estimator. 
It shows that the estimation error is smaller when the aspect ratio $\gamma$ is small. 
We write $\vec\Sigma$ for the vector of the diagonal elements of $\Sigma$. 

\begin{theorem}[Rate of convergence]
Under the conditions of Theorem \ref{thm:Tnorm}, assume in addition that  the fourth moment of
 the entries $\ep_i$, $i\in[n]$
is less than a constant $C\ge 3$ times the  squared variance of the entries.
Let $V$ denote the vector of variances of the entries of the OLS estimator. Then, under high-dimensional asymptotics as $n,p\to\infty$ such that $\limsup \gamma_{p,n}=\limsup p/n < 1/2$, we have 
for any constant $c>1$ and some constant $C'>1$ that 
for all $n$ large enough,

$$ \mathbb{P} \left(\frac{\|\hV - V\|}{\|\vec\Sigma\|} \ge \frac{t}{n} \right) \le  \frac{2c}{t^2}\frac{1}{\left[\sigma_{\min}(\Gamma)(1-\gamma_{p,n}^{1/2})^2 (1-2 \gamma_{p,n})\right]^2} + C' n^{-1-\delta/4}.
$$
\label{r}
\end{theorem}
See Section \ref{pf:r} for a proof.

\subsection{Asymptotic normality}
\label{sec:norm}

We already know that the estimator $\hV$ is unbiased for the variances of the coordinates of the OLS  estimator $V = \diag\Cov{\hbeta}$, and in the previous section we have seen an inequality bounding the error $\|\hV-V\|$. 
In this section, we aim to study an estimator of
$w_p^\top S \Sigma S^\top w_p$ for a sequence $(w_p)_{p\ge 1}$  of vectors $w_p\in \mathbb{R}^p$.
This represents the variance of $w_p^\top \hbeta$, 
and taking $w_p$ to be the $i$-th canonical basis vectors in $\R^p$, for all $p$,
it reduces to $V_i$. The analysis will later be further used to derive an inferential method for $w_p^\top \beta$.

We use the coordinate-wise case of estimating $V_i$ for some $i\in [p]$, to illustrate the idea. 
To study the asymptotic distribution of $\hV_i = A_i^\top (\hep\odot\hep)$, where $A_i^\top$ is the $i$-th row of $A =  (S\odot S) (Q\odot Q)^{-1}$, we consider the noise $\ep = \Sigma^{1/2}Z$ to be linear combination of a vector of sufficiently smooth functions $Z$ of a
Gaussian random vector, specified by the conditions in Theorem \ref{thmhatviasym} below.
In that case, we can express the residuals as $\hep = Q\Sigma^{1/2}Z$. 

Thus, we see that the estimator $\hV_i$, a linear combination of squared entries of $\hep_i$, can be written as a symmetric quadratic form in $Z$. 
In particular, if $Z \sim N(0,I_n)$, 
its distribution is a weighted linear combination of chi-squared random variables.
For general $w_p$, the above discussion still applies by replacing $A_i^\top$ with $[(w_p^\top S) \odot (w_p^\top S)](Q\odot Q)^{-1}$.
For a linear combination of chi-squared random variables, we expect that it is close to a normal distribution
if none of the weights is too large. 
This is formalized by a so-called second order Poincar\'e inequality \citep{chatterjee2009fluctuations}. We will use this result to obtain 
the approximation to the normality of the variance estimator, given in the following result, proved in Section \ref{sec:pfhatvnormal}.
Let $d_{\mathrm{TV}}$ denote total variation distance.
For a function $f:\R\to\R$, $\|f\|_\infty  = \sup_{x\in\R}|f(x)|$ is the sup-norm.

\begin{theorem} [Approximate normality] \label{thmhatviasym} Assume $\epsilon = \Sigma^{1/2} Z$ 
where $Z = (Z_1, \ldots, Z_n)^\top$ 
consists of independent entries that have means zero, variances one, and fourth moments bounded by  $C_4$.
Further suppose 
that for all $i\in [n]$,
$Z_i = \psi_i(N_i)$, where $(N_1,\ldots N_n)^\top \sim \N(0,I_n)$, 
and $\psi_i:\R\to\R$ are twice-differentiable functions such that 
for all $i\in [n]$,
$\|\psi_i'\|_\infty < c_1 $ and $\|\psi_i''\|_\infty < c_2$ for some positive constants $c_1, c_2$.

Let $\widehat{\vec\Sigma}=(Q\odot Q)^{-1}(\hep \odot \hep).$
For a 
sequence of vectors 
$(w_p)_{p\ge 1}$, where  $w_p\in \mathbb{R}^p$ for all $p\ge 1$, 
of unit norm, we have  that
\begin{equation}\label{tvbd}
     d_{\mathrm{TV}}\left(\frac{w_p^\top S (\diag\widehat{\vec\Sigma}) S^\top w_p- w_p^\top S\Sigma S^\top w_p}{\sqrt{\mathrm{Var}{[w_p^\top S (\diag\widehat{\vec\Sigma}) S^\top w_p] }}},  \N(0,1)\right) \leq C_0\frac{\|G(w_p)\|}{\|G(w_p)\|_{\Fr}},
\end{equation}
where 
\begin{equation}\label{gw}
G(w_p) = \Sigma^{1/2}Q \diag\left\{[(w_p^\top S) \odot (w_p^\top S)](Q\odot Q)^{-1}\right\} Q \Sigma^{1/2}    
\end{equation}
and 
$C_0=\min\{C_4-1,2\}^{-1}\cdot$ $8\sqrt{5}c_1(4C_4+2)^{1/4}[(4C_4+2)^{1/4} c_2+ c_1^2]$.
\end{theorem}

We do not necessarily require $Z_i$, $i\in[p]$, to have unit variances, 
since we can normalize them and absorb the constants into $\Sigma$.
It is also possible to allow $C_4$, $c_1$ and $c_2$ to grow to infinity, which allows for some heavy-tailed distributions for $Z_i$.
In addition, as a special case of this theorem, taking $w_p$ to be the $i$-th canonical basis vectors in $\R^p$, for all $p$, leads to 
 \begin{equation*}
     d_{\mathrm{TV}}\left(\frac{\hat{V}_i- V_i}{\sqrt{\Var{\hat{V}_i}}},  \N(0,1)\right) \leq C_0\frac{\|G(e_i)\|}{\|G(e_i)\|_{\Fr}},
\end{equation*}
where  $G(e_i) = \Sigma^{1/2} Q\diag[e_i^\top (S\odot S) (Q\odot Q)^{-1}]Q\Sigma^{1/2}.$

In principle, this result could justify using normal confidence intervals for inference on $V_i$ as soon the upper bound provided is small. Moreover, the upper bound in Theorem \ref{thmhatviasym} can be simplified as follows. Denote $A(w_p) = [(w_p^\top S) \odot (w_p^\top S)](Q\odot Q)^{-1}$. We have the upper bound $\|G(w_p)\| \le \|\Sigma\| 
 \|Q \diag [A(w_p)] Q\|$ and the lower bound
 $ \|G(w_p)\|_{\Fr} \ge  \lambda_{\min}(\Sigma)  \| Q \diag[A(w_p)] Q\|_{\Fr}.$
 Therefore,  the upper bound simplifies to
 $$ C_0 \kappa(\Sigma) \frac{\| Q\diag[A(w_p)]Q \|}{ \| Q\diag[A(w_p)]Q \|_{\Fr}}.$$
This bound decouples 
as the product of a term depending on the unknown covariance matrix $\Sigma$, and the known design matrix $X$. Therefore, in practice one can evaluate the second term.
Thus, the deviation from normality only 
depends on the unknown $\Sigma$ through its condition number.

The subsequent proposition further characterizes conditions on the design matrix 
$X$ that lead to $\|G(w_p)\|/\|G(w_p)\|_{\Fr}\to 0$, thereby resulting in the asymptotic normality of the variance estimator of $w_p^\top S \Sigma S^\top w_p$. 
We also verify that the random design matrix specified in Theorem \ref{thm:Tnorm} satisfies these conditions with probability tending to one. 
See Section \ref{sec:pflemWnorm} for the proof.

\begin{proposition}[Conditions for asymptotic normality]\label{lemWnorm}
For $j\in[n]$,
let $S_{.j}$ be the $j$-th column of $S= (X^\top X)^{-1} X^\top$.
Consider the following conditions, where $c$ and $C$ are two positive constants: 
\begin{enumerate}
\item \label{cond1lemWnorm}
$\lambda_{\min}(Q\odot Q)>c$.
\item \label{cond2lemWnorm}
$\max_j |w_p^\top S_{.j}|=o\left(n^{-1/4}[\lambda_{\min}(X^\top X)]^{-1/2}\right).$
\item \label{cond3lemWnorm} $c<\lambda_{\min}(\Sigma)<\lambda_{\max}(\Sigma)<C.$
\end{enumerate}
Then, for $G(w_p)$ from \eqref{gw},
we have 
$$\|G(w_p)\| = o\left(n^{-1/2}[\lambda_{\min}(X^\top X)]^{-1}\right)\, \textnormal{ and  }\, \|G(w_p)\|_{\Fr} = \Omega\left(n^{-1/2}[\lambda_{\min}(X^\top X)]^{-1}\right).$$
In particular, $\|G(w_p)\|/\|G(w_p)\|_{\Fr}=o(1)$ and thus the total variation from \eqref{tvbd} vanishes asymptotically.
Moreover, 
for a random design matrix 
satisfying the conditions in Theorem \ref{thm:Tnorm}, 
if $\kappa(\Gamma)$ is bounded, the above conditions hold with probability tending to one; and thus $w_p^\top S (\diag\widehat{\vec\Sigma}) S^\top w_p$ is asymptotically normal.
\end{proposition}

After this detailed analysis of $w_p^\top S \Sigma S^\top w_p$ and its estimator $w_p^\top S (\diag\widehat{\vec\Sigma}) S^\top w_p$, we discuss inference for $w_p^\top \beta$. 
This task crucially relies on the first conclusion in the following ratio-consistency lemma, 
which is a simple consequence of several bounds obtained in Proposition \ref{lemWnorm}. The second conclusion in this lemma shows that although $\hV_i$---the unbiased estimator of $V_i$---can be negative with a small probability, all $\hV_i$, for $i\in[n]$, 
are simultaneously positive with probability tending to one. 
This finding is consistent with our numerical experiments.  
The proof of Lemma \ref{lem:cons-est} is in Section \ref{sec:pfinf}.
\begin{lemma}[Ratio-consistency]\label{lem:cons-est} We have the following ratio-consistency results:

\begin{enumerate} 
    \item \label{consistence1} Under the conditions of Theorem \ref{thmhatviasym} on the noise $\epsilon$, and conditions 1, 2, 3 from Proposition \ref{lemWnorm} on the data matrix $X$, we have $w_p^\top S (\diag\widehat{\vec\Sigma}) S^\top w_p/$ $w_p^\top S \Sigma S^\top w_p$ $\to_P 1$.
    \item If we further have $\max_{1\le i \le n}\E|Z_i|^8 = O(n)$, then it follows that
    $\max_{i=1}^n |\hV_i/V_i-1| = o_P(1).$
\end{enumerate}
\end{lemma}

The following result provides an inferential method for contrasts $w_p^\top \beta$. 
Its proof is built on Lemma \ref{lem:cons-est}, and is included in Section \ref{sec:pfinf}.

\begin{theorem}[Inference for contrasts]\label{thmbetanormal}
Under the conditions of 
claim \ref{consistence1} of Lemma \ref{lem:cons-est}, and 
further assuming that, with $S = (X^\top X)^{-1} X^\top$ ,
 \begin{equation}\label{assdelo}
    \frac{\max_{j\in[n]}|w_p^\top S\Sigma^{1/2} e_j|^2}{w_p^\top S \Sigma S^\top w_p}\to 0,
\end{equation}
we have 
$(w_p^\top \hat{\beta} - w_p^\top \beta)/\sqrt{w_p^\top S \diag\left[(Q\odot Q)^{-1}(\hep \odot \hep)\right] S^\top w_p} \Rightarrow \N(0,1)$.
\end{theorem}
Condition \ref{assdelo} can be viewed as a delocalization property, where none of the 
coordinates of the vector
$ \Sigma^{1/2} S^\top w_p$
dominate.

\section{Numerical Results}
\label{sec:num}

In this section, we present several numerical simulations supporting our theoretical results.
We consider the following cases:

\begin{enumerate}
    \item[] Case 1:  Take $X$ to have i.i.d.~standard normal entries, and the noise to be $\ep  = \Sigma^{1/2} Z$, where $Z$ has i.i.d.~standard normal entries. The noise covariance matrix $\Sigma$ is the diagonal matrix of eigenvalues of an AR-1 covariance matrix $T$, with $T_{ij} = \rho^{i-j}$. 
    \item[] Case 2: Take $X$ to have i.i.d.~$t_{10}$ entries, and the noise to be $\ep = \Sigma^{1/2} Z$ where $Z$ has i.i.d.~standard normal entries. The noise covariance matrix $\Sigma$ is the diagonal matrix consisting of the coordinates of $|c^\top X|$ where $c = (1,0, \ldots, 0)\in \mathbb{R}^n$.
\end{enumerate}

\subsection{Mean type I error over all coordinates} 

We show the mean type I error of the normal confidence intervals based on the White, MacKinnon-White, and Hadamard methods over all coordinates of the OLS estimator. 

\begin{figure}[ht!]
\centering
\begin{tabular}{ccc}
& {\large $\rho=0$} & {\large $\rho=0.9$} \\
{\begin{sideways}\parbox{\PBW\columnwidth}{\centering $\gamma = 0.1$}\end{sideways}} &
\includegraphics[width=\FW, trim = \TRA mm \TRB mm \TRC mm \TRD mm, clip = TRUE]{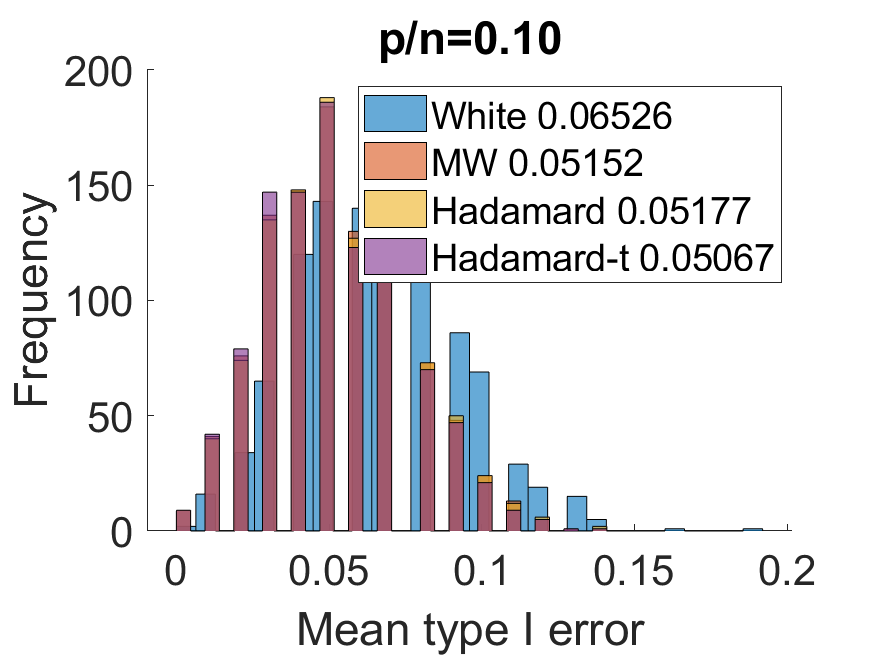} &
\includegraphics[width=\FW, trim = \TRA mm \TRB mm \TRC mm \TRD mm, clip = TRUE]{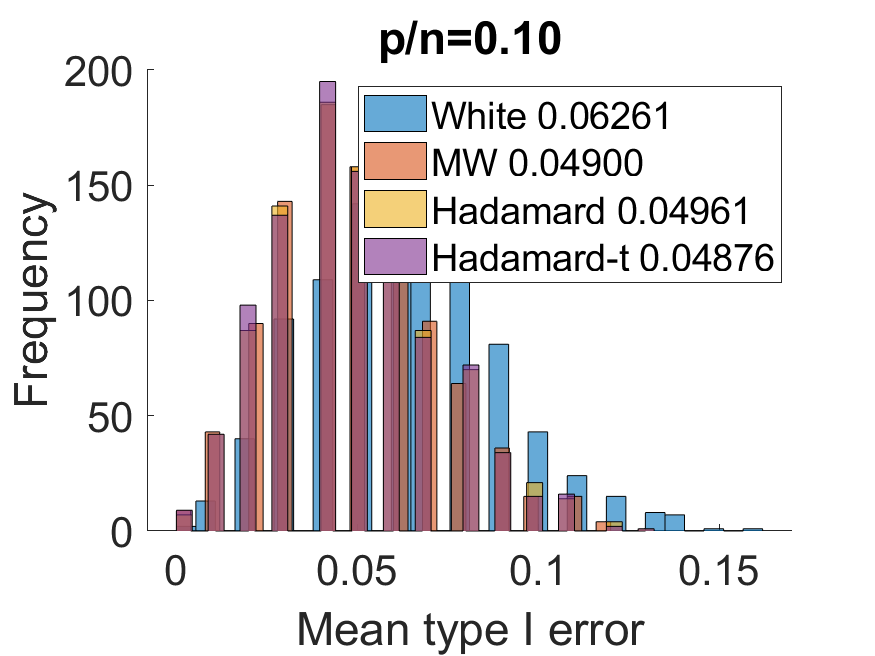} \\
{\begin{sideways}\parbox{\PBW\columnwidth}{\centering $\gamma = 0.5$}\end{sideways}} &
\includegraphics[width=\FW, trim = \TRA mm \TRB mm \TRC mm \TRD mm, clip = TRUE]{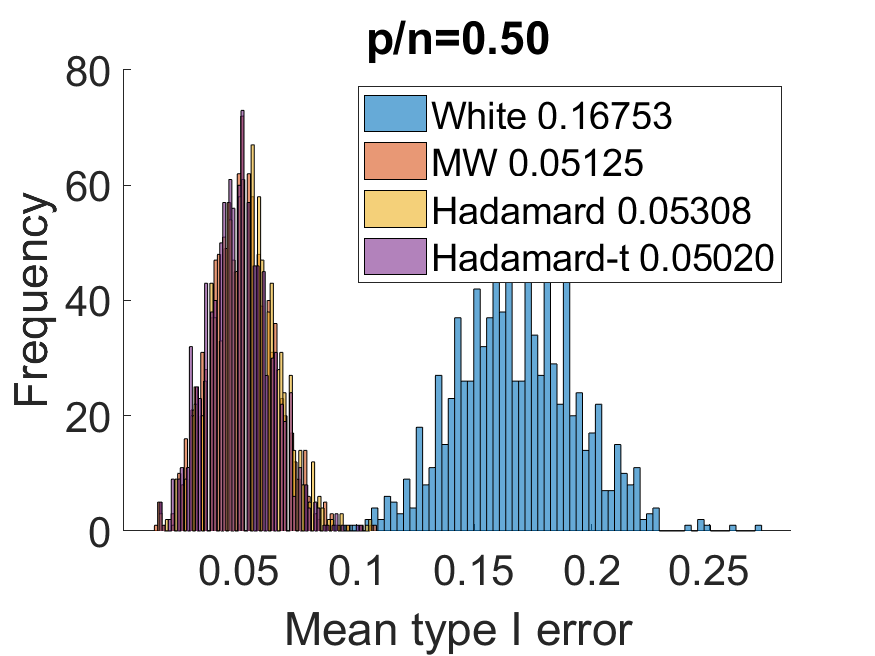} &
\includegraphics[width=\FW, trim = \TRA mm \TRB mm \TRC mm \TRD mm, clip = TRUE]{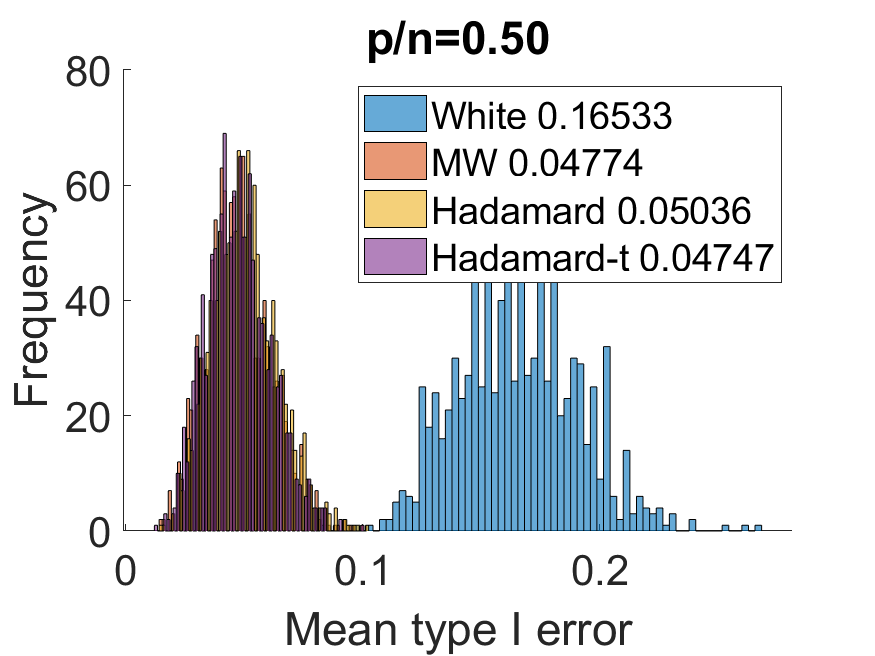}  \\
{\begin{sideways}\parbox{\PBW\columnwidth}{\centering $\gamma = 0.75$}\end{sideways}} &
\includegraphics[width=\FW, trim = \TRA mm \TRB mm \TRC mm \TRD mm, clip = TRUE]{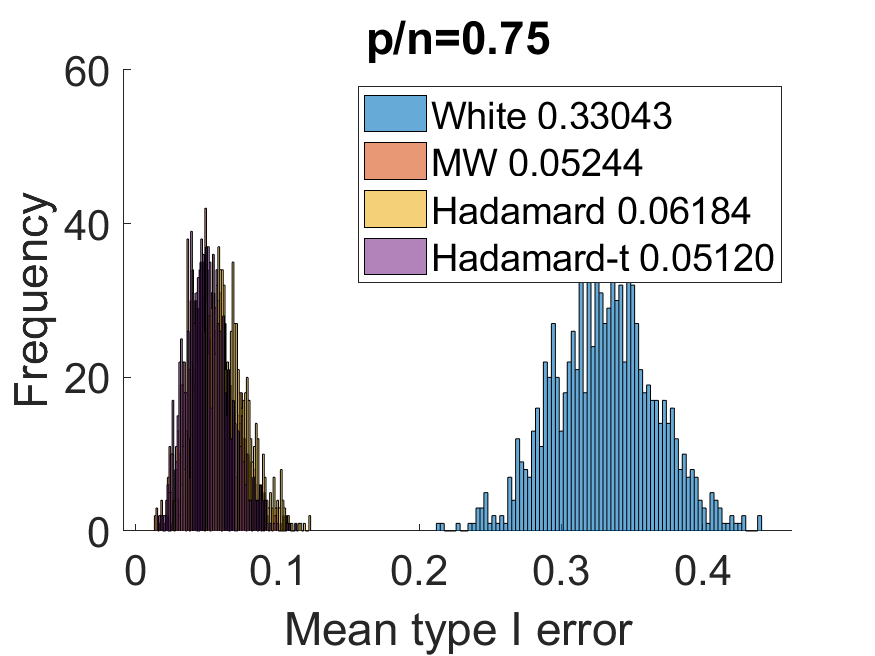} &
\includegraphics[width=\FW, trim = \TRA mm \TRB mm \TRC mm \TRD mm, clip = TRUE]{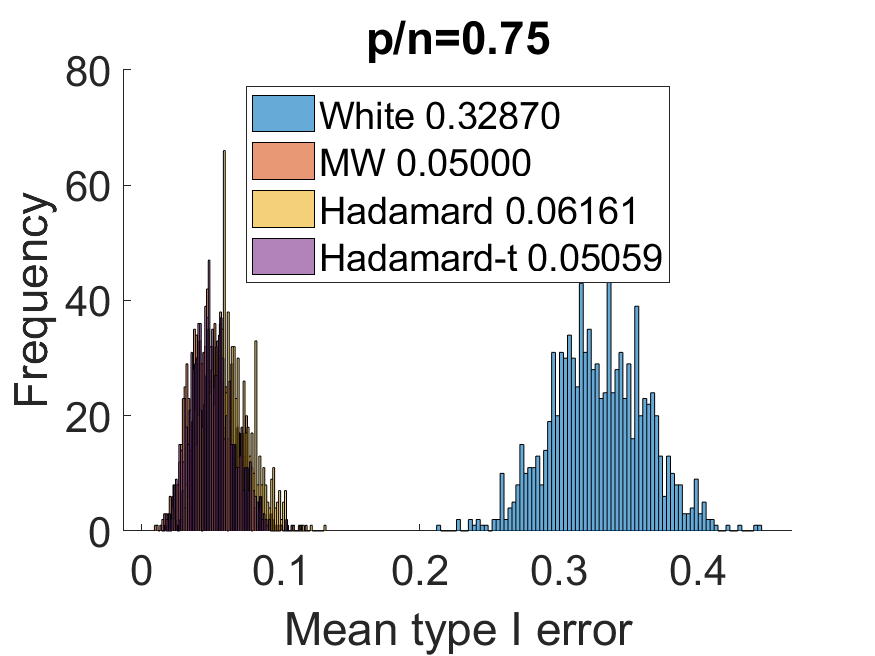} \\
\end{tabular}
\caption{Mean type I error over all coordinates.}
\label{fig:3}
\end{figure}

In Figure \ref{fig:3}, we show the results for Case 1.
We take $n=1000$,  and three aspect ratios, $\gamma = 0.1, 0.5, 0.75$, varying $p$. We consider $\rho=0$ (homoskedasticity), and $\rho=0.9$ (heteroskedasticity). We draw one instance of $X$, and draw 1000 Monte Carlo repetitions of $\ep$.

We observe that the CIs based on White's covariance matrix estimator are inaccurate for the aspect ratios considered. They have inflated type I error rates. All other estimators are more accurate. The MW confidence intervals are quite accurate for each configuration. The Hadamard estimator using the degrees of freedom correction is comparable, and noticably better if the dimension is high. 

\subsection{Coordinate-wise type I error} 

The situation is more nuanced, however, when we look at individual coordinates. In Table \ref{my-label}, we report the empirical type I error of the methods for the first coordinate in Case 1, where the average is over the Monte Carlo trials. 
In this case, the MW estimator can be either liberal or conservative, while the Hadamard estimator is closer to having the right coverage.

\begin{figure}[ht!]
    \begin{subfigure}{.5\textwidth}
  \centering
  \includegraphics[scale=0.5]{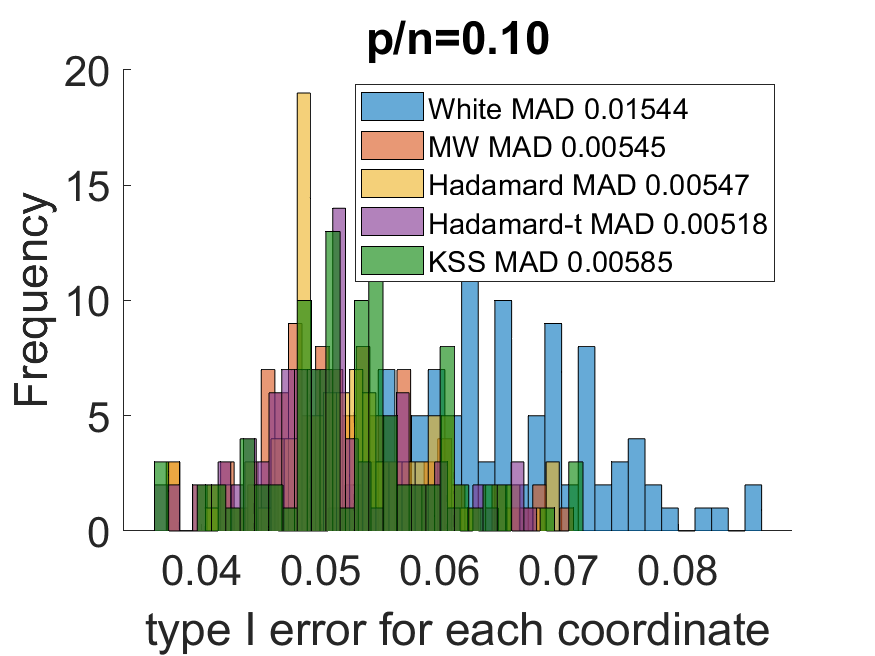}
\end{subfigure}
\begin{subfigure}{0.5\textwidth}
  \centering
  \includegraphics[scale=0.5]{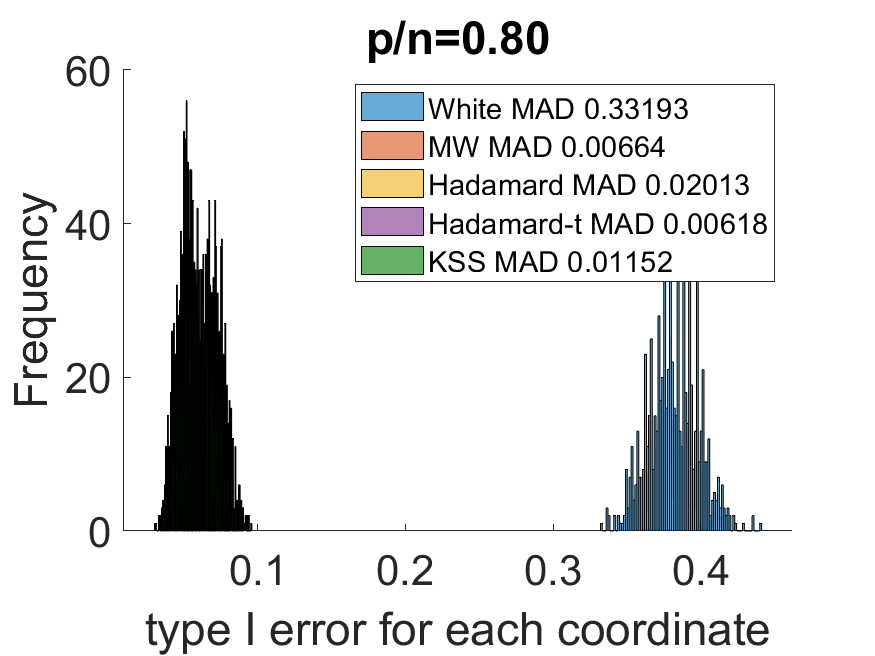}
\end{subfigure}
\caption{Mean type-I error for each coordinate over 1000 simulations. }
    \label{fig:allcoordinae}
\end{figure}

\renewcommand{\arraystretch}{1.5}
\begin{table}
\centering
\caption{Type I error for the first coordinate.}
\label{my-label}
\begin{tabular}{|l|l|l|l|l|}
\hline
$\gamma = p/n$ & White            & MW  & Hadamard   & Hadamard-t \\ \hline
0.5         & 0.172 &0.045 &0.042 &0.039\\ \hline
0.75     & 0.347 &0.059 &0.053 &0.047 \\ \hline
\end{tabular}
\end{table}

Figure \ref{fig:allcoordinae} displays the mean type I error of each coordinate in Case 2. 
We also compare with the leave out (KSS) estimator developed in \cite{kline2020leave}.
The results for Case 1 are in Section \ref{sec:appendix-simulations} of the appendix.
To measure the overall accuracy, we also report the mean absolute deviation
$\mathrm{MAD} = p^{-1}\sum_{j=1}^p |\mathrm{err}_j - 0.05|$ where $\mathrm{err}_j$ refers to the type I error for the $j$-th coordinate. 
The White estimator has inflated type I errors especially for larger $p$.  We also observe that although the MAD of the Hadamard estimator is large when $p=800$, the degrees-of-freedom adjustment significantly improves the performance and achieves performance comparable  to the MW estimator.
The performance of the KSS estimator resembles that of the Hadamard-t estimator.

To further compare with other methods, 
we plot the mean type I error as a function of $p$ by taking $p$ equally spaced from 100 to 800 with gaps of 100, see Figure \ref{fig:case2-bootstrap}. 
The results including the KSS estimator and for Case 1 are reported in Section \ref{sec:appendix-simulations} of the appendix. 
We observe that the  MW estimators are liberal for the first coordinate but accurate for the second coordinate. The CI based on the Hadamard estimator has a slightly inflated type I error for both coordinates for larger $p$, but the Hadamard-t estimator is accurate.  

For comparison, we also conduct experiments using two variants of the bootstrap.
First, we use the pairs bootstrap
\citep{freedman1981bootstrapping}, 
where each observation of the bootstrap sample $[X^*, Y^*]$ is sampled randomly with replacement
from the rows of $[X, Y]$, see Figure \ref{fig:case2-bootstrap}.
We also include the residual bootstrap,
which samples with replacement the residuals $Y - X\hat{\beta}$, and adds them to $X\hat{\beta}$ to get the new $Y^*$. 
Intuitively, this method is justified if the error terms are independent and identically distributed (see e.g., \cite{mackinnon2006bootstrap} for a discussion). 
The corresponding results are in Figure \ref{fig:case2-bootstrap-6methods}. 

Since the rows are sampled with replacement 
for the pairs bootstrap, 
when $p$ is close to $n$,  the resampled $X^{*\top}X^*$ may be ill-conditioned or non-invertible.
Figure \ref{fig:case2-bootstrap} shows the results where the matrix inversion is done using a pseudoinverse when $p>500$. 
This leads to unstable confidence intervals, coverage and length. 
The lengths of the confidence intervals are in the right panel of Figure \ref{fig:case2-bootstrap-6methods}.

We also conduct experiments using the jackknife (or equivalently HC3 in \cite{mackinnon1985some}), and 
see from Figures \ref{fig:case1-bootstrap-jackknife-first} and \ref{fig:case2-bootstrap-jackknife-first} that the jackknife is not accurate. 
This can be explained by the fact that the expression for the jackknife estimator therein was derived for a relatively small dimension $p$.

\begin{figure}[ht!]
\begin{subfigure}{.5\textwidth}
  \centering
  \includegraphics[scale=0.5]{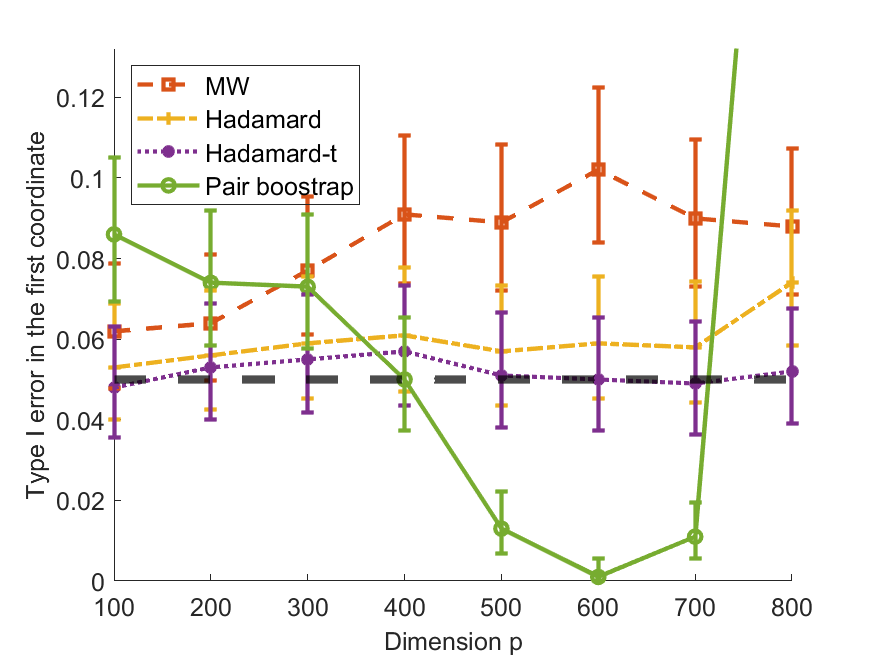}
\end{subfigure}
\begin{subfigure}{0.5\textwidth}
  \centering
  \includegraphics[scale=0.5]{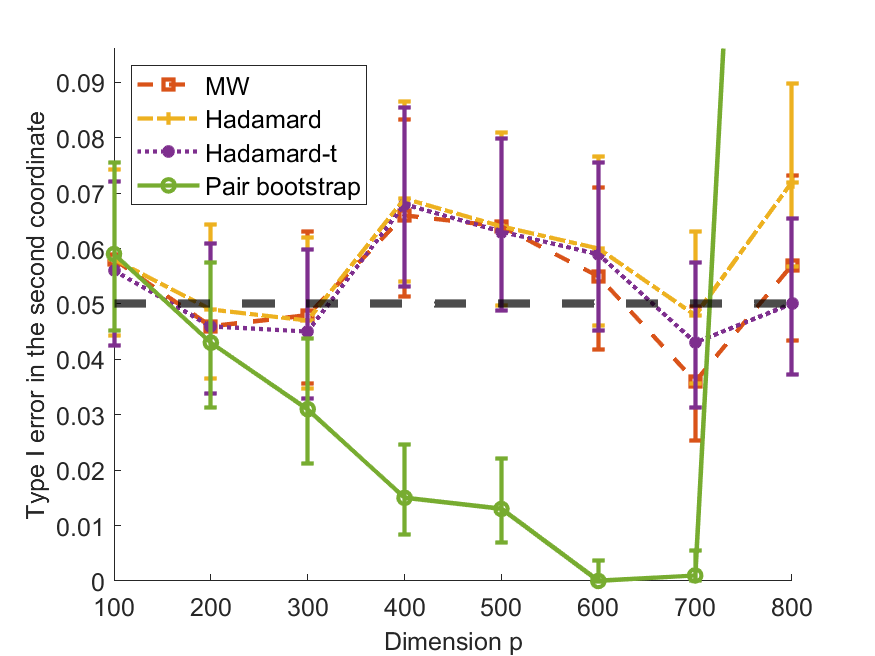}
\end{subfigure}
\caption{Mean type-I error in the first coordinate and second coordinate over 1000 simulations each. The error bars represent 95\% Clopper-Pearson intervals for the coverage.}
    \label{fig:case2-bootstrap}
\end{figure}

\subsection{Estimating the MSE} 

\begin{figure}[ht!]
\begin{subfigure}{.5\textwidth}
  \centering
  \includegraphics[scale=0.45]{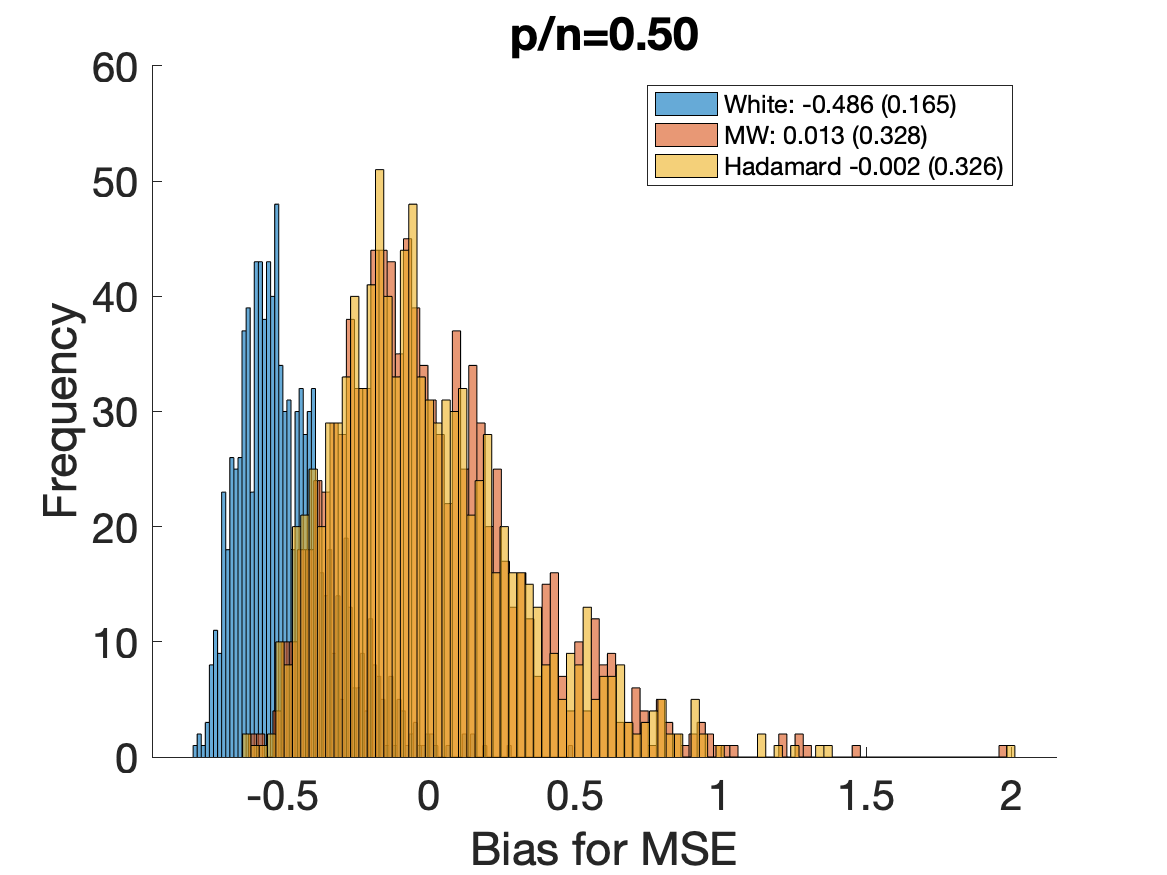}
  \caption{$p/n=0.5$}
\end{subfigure}
\begin{subfigure}{.5\textwidth}
  \centering
  \includegraphics[scale=0.45]{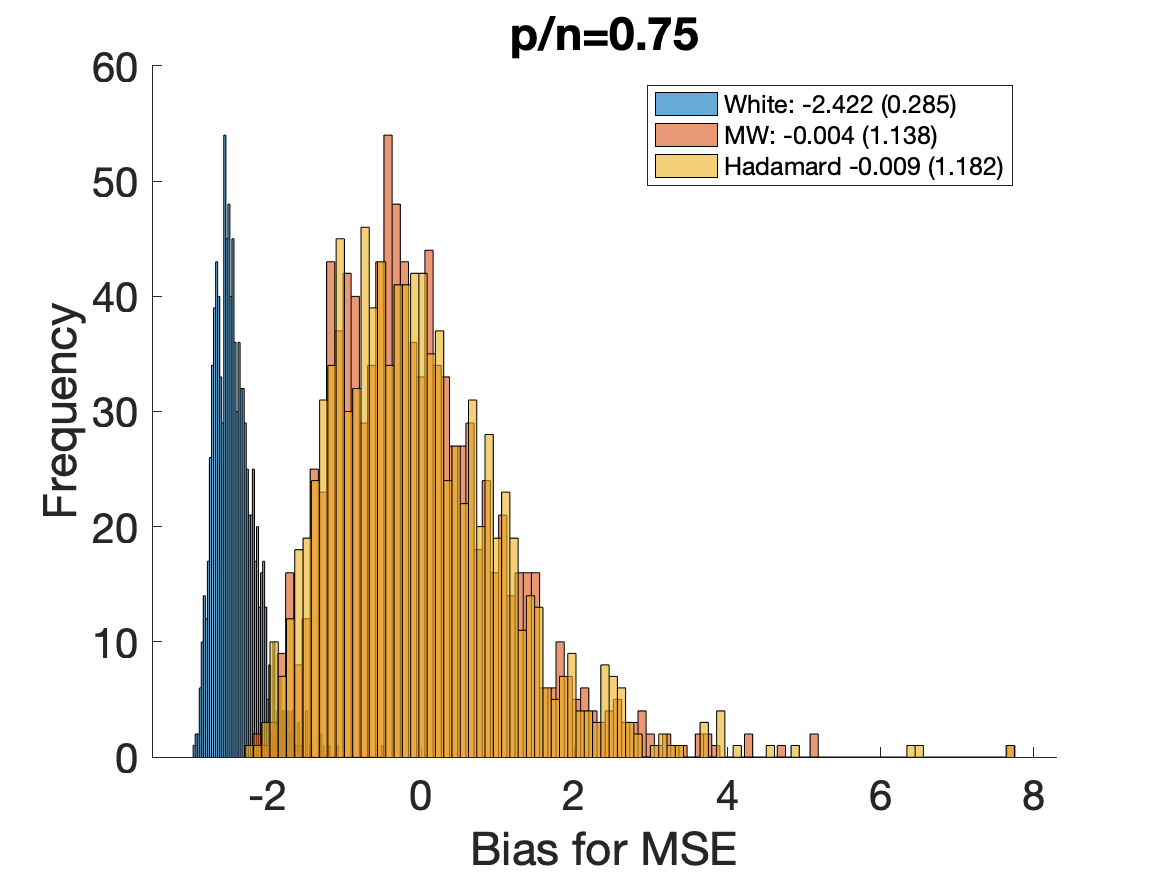}
  \caption{$p/n=0.75$}
\end{subfigure}
\caption{Bias in estimating MSE.}
\label{fig:4}
\end{figure}

In Figure \ref{fig:4}, we show the bias in estimating the MSE of the OLS estimator in Case 1 for the three methods where the numbers outside the bracket correspond to the mean biases of the 1000 Monte Carlo runs and the numbers inside the bracket stand for the standard deviation. For each method, we use the estimator which equals the sum of the variances of the individual component estimators. 

The results are in line with those from the previous sections. Both MacKinnon-White's and the Hadamard estimator perform much better than the White estimator. In addition, the Hadamard estimator is usually comparable to MacKinnon-White's. More results for Case 2 are included in Section \ref{sec:appendix-simulations} of the Appendix.


\subsection{Approximate Normality} 
\label{ano}
\begin{figure}[ht!]
\begin{subfigure}{.5\textwidth}
  \centering
  \includegraphics[scale=0.5]{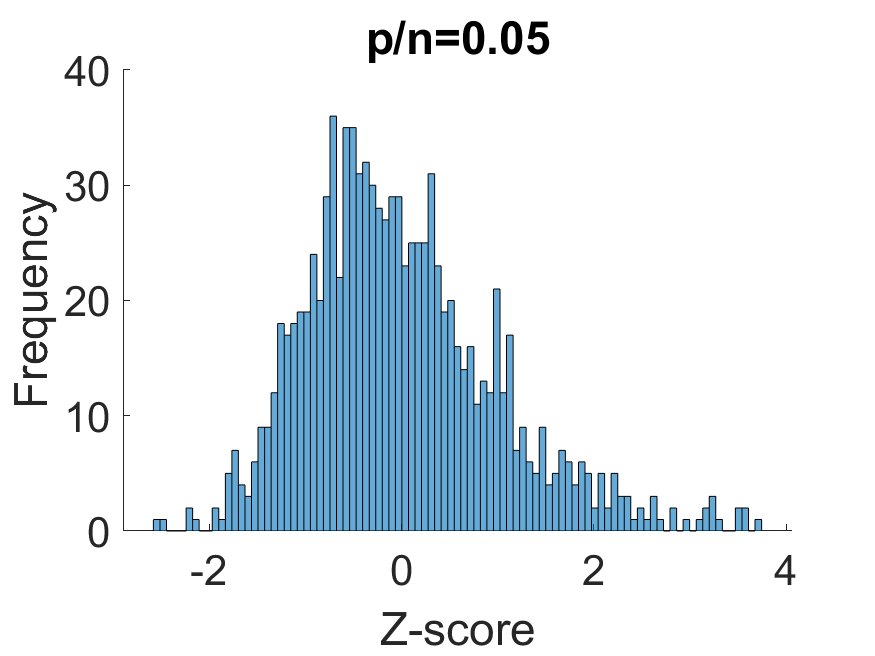}
  \caption{$p/n=0.05$}
\end{subfigure}
\begin{subfigure}{.5\textwidth}
  \centering
  \includegraphics[scale=0.5]{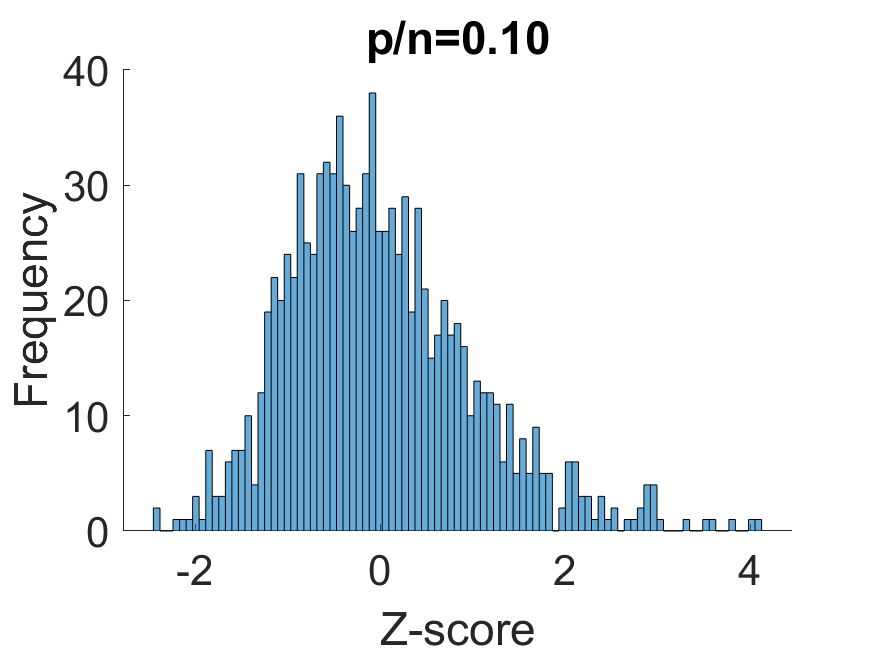}
  \caption{$p/n=0.1$}
\end{subfigure}
\caption{Distribution of $z$-scores of a fixed coordinate of the Hadamard estimator.}
\label{fig:5}
\end{figure}

In Figure \ref{fig:5}, we show the distribution of $z$-scores of a fixed coordinate of the Hadamard estimator in Case 1. We use a similar setup to the previous sections, but we choose a larger sample size $n=1,000$, and also smaller aspect ratios $p/n=0.05$ and $p/n=0.1$. We observe a relatively good fit to the normal distribution, but it is also clear that a chi-squared approximation may lead to a better fit.


\section{Discussion and Future Work}


In this paper, we have  provided several fundamental theoretical results for 
the Hadamard estimator,
which can be used to construct confidence intervals for the OLS estimator of coefficients in a linear model under heteroskedasticity.
We showed that the Hadamard estimator is well-defined and well-conditioned for certain random design models. There are several  important directions for future research.
Can one develop similar results for nonlinear models? 
Is it possible to establish the
non-negativity of the Hadamard estimator, possibly with some
regularization? Is it possible to show approximate coverage results
for our $t$-confidence intervals based on the degrees of freedom correction as given in \eqref{eq:d_formula}? Such results have been obtained in the low-dimensional case by \cite{kauermann2001note}, for instance. However, establishing such results in high dimensions seems to require different techniques.

Beyond our current investigations, an important direction is the development of tests for heteroskedasticity. White's original paper proposed such a test based on comparing his covariance estimator to the usual one under homoskedasticity. There are many other well-known proposals \citep{dette1998testing,
azzalini1993use,
cook1983diagnostics,
breusch1979simple, wang2018empirical}. Perhaps most closely related to our work, 
\cite{li2019testing} have proposed tests for heteroskedasticity with good properties in low and high dimensional settings. Their tests rely on computing measures of variability of the estimated residuals, including the ratio of the arithmetic and geometric means, as well as the coefficient of variation. Their works and follow-ups such as  \cite{bai2016homoscedasticity, bai2017central} show central limit theorems for these test statistics. They also show an improved empirical power compared to some classical tests for heteroskedasticity. It would be of interest to see if our covariance matrix estimator could be used to develop new tests for heteroskedasticity.

An important extension of the heteroskedastic model is the clustered observations model. \cite{liang1986longitudinal} proposed estimating equations for such longitudinal/clustered data. They allowed arbitrarily correlated observations for any fixed individual (i.e., within each cluster), and proposed a consistent covariance estimator in the low-dimensional setting.  Can one  extend our ideas to the clustered case?

Another important direction is to develop covariance estimators that have good performance in the presence of both heteroskedasticity and autocorrelation. The most well-known example is possibly the popular Newey-West  estimator \citep{west1987simple}, which is a sum of symmetrized lagged autocovariance matrices with decaying weights. Is it possible to develop new methods inspired by our ideas suitable for this setting?

Our paper does not touch on the interesting but challenging regime where $n<p$. In that setting, Buhlmann, Dezeure, Zhang, \citep{dezeure2016high} proposed bootstrap methods for inference with the lasso under heteroskedasticity, under the limited ultra-sparse regime, where the sparsity $s$ of the regression parameter is $s \ll n^{1/2}$. These methods are limited as they apply only to the lasso, and because they only concern the ultra-sparse regime. It would be interesting to understand this regime better.

It is possible that one could sometimes get better confidence intervals by adding some regularization to the starting estimator, such as starting with an $\ell_2$ regularized regression estimator. However, note that this would come at the cost of introducing some bias, so that each confidence interval would not be centered at the true parameter anymore. It is possible that by an appropriately small regularization, one could achieve a favorable ``bias-variance" tradeoff for confidence intervals. However, the specific details are likely complex (e.g., how to tune the regularization) and deserve a separate investigation.

\section{Acknowledgements}

The authors thank Matias Cattaneo and Jason Klusowski  for valuable discussions and feedback on an earlier version of the manuscript. We are grateful to the referees, whose valuable suggestions have lead to many improvements to the paper.
This work was supported in part by the NSF grant DMS 2046874.

{\small
\setlength{\bibsep}{0.2pt plus 0.3ex}
\bibliographystyle{plainnat-abbrev}
\bibliography{references}
}

\appendix
\section*{Appendix}

\medskip
{\bf Notation.}
For two positive sequences $(a_n)_{n\ge 1}$, $(b_n)_{n\ge 1}$, 
we write $a_n \asymp b_n$ if $C^{-1}b_n\le a_n \le C b_n$ for some positive constant $C$.

\section{Proofs}

\subsection{Proof of unbiasedness of the Hadamard estimator}
\label{pf:unb}

We consider estimators of the vector of variances of $\hbeta$ of the form 
$\hV = A\cdot (\hep\odot\hep)$
where $A$ is a $p\times n$ matrix, and $M\odot M$ is the element-wise (or Hadamard) product of the vector or matrix $M$ with itself. Our goal is to find $A$ such that $\E \hV = V$, where $V = \diag \Cov{\hbeta}$. Here the $\diag$ operator returns the vector of diagonal entries of the matrix $M$, that is $\diag M = (M_{11},M_{22},\ldots, M_{nn})^\top$.

Recall that $S = (X^\top X)^{-1}X^\top $ is a $p\times n$ matrix. We have that $\hbeta = Sy  = S\ep +\beta$. Since $\Cov{\ep} = \Sigma$, we have that
$\Cov{\hbeta} = S\Sigma S^\top.$ 
Thus, our goal is to find unbiased estimates of the diagonal of this matrix. The following key lemma re-expresses that diagonal in terms of Hadamard products:

\begin{lemma} 
Let $v$ be a zero-mean random vector, and $M$ be a fixed matrix. Then,
$$\E (M \odot M)(v \odot v)= \diag[M \diag\Cov{v}M^\top].$$
In particular, let $\Sigma$ be a diagonal matrix. and let $\vec\Sigma$ be the vector of diagonal entries of $\Sigma$. Then 
$$ (M \odot M)\vec\Sigma= \diag[M  \Sigma M^\top].$$
Alternatively, let $u$ be a vector. Then 
$(M \odot M)u = \diag[M  \diag(u) M^\top].$

\end{lemma}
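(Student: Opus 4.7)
The plan is to prove the third (most general) identity first by direct entrywise computation, and then obtain the other two as immediate specializations.

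First I would fix an index $i$ and compute the $i$-th entry of $(M \odot M)u$ directly: by definition of the Hadamard product and matrix-vector multiplication, it equals $\sum_k M_{ik}^2 u_k$. Then I would expand the $i$-th diagonal entry of $M\, \diag(u)\, M^\top$: using that $\diag(u)$ has entries $u_k \delta_{kl}$, the double sum $\sum_{k,l} M_{ik}(\diag u)_{kl} M_{il}$ collapses to the single sum $\sum_k M_{ik}^2 u_k$. Matching the two expressions gives the third identity. This is really the only computation needed; everything else is bookkeeping.

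Next, the second identity is an immediate specialization: if $\Sigma$ is diagonal, then $\diag(\Sigma_{vec}) = \Sigma$, so applying the third identity with $u = \Sigma_{vec}$ yields $(M\odot M)\Sigma_{vec} = \diag[M\Sigma M^\top]$. No new work is required.

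Finally, for the first identity I would take expectations. Because $v$ is zero-mean, $\E[v_k^2] = \Var(v_k)$, which is exactly the $k$-th entry of $\diag\Cov{v}$. The $i$-th entry of the left-hand side is then $\sum_k M_{ik}^2 \E[v_k^2] = \sum_k M_{ik}^2 (\diag\Cov{v})_k$, and by the third identity applied with $u = \diag\Cov{v}$, this equals the $i$-th entry of $\diag[M\,\diag\Cov{v}\,M^\top]$.

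There is no real obstacle here; the only mild subtlety is checking that the off-diagonal covariances of $v$ never enter the left-hand side, which is precisely because Hadamard squaring picks out only $v_k^2$ terms rather than cross-terms $v_k v_l$. I would remark on this explicitly to make clear why only the diagonal of $\Cov{v}$ appears on the right-hand side of part (1), even when $v$ has arbitrary correlation structure.
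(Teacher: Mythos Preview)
Your proof is correct and uses essentially the same entrywise computation as the paper: both reduce to checking that the $i$-th entry of each side is $\sum_k M_{ik}^2 u_k$ (respectively $\sum_k M_{ik}^2 \Sigma_k$). The only difference is organizational---you prove the purely algebraic third identity first and deduce the probabilistic first identity by taking expectations, whereas the paper does it in the opposite order; neither route involves any idea the other lacks.
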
 

\begin{proof} 
Suppose $M$ has $k$ rows, and
denote them by $m_i$, $i\in[k]$. Let also $\Sigma = \diag\Cov{v}$. 
Then,
for any $i\in[k]$,
the $i$-th entry of the left hand side equals, with $l$ denoting the number of columns of $M$,
$$\E (m_i \odot m_i)^\top(v \odot v)= \E \sum_{j\in[l]} m_{ij}^2 v_{j\in[l]}^2 = \sum_{j\in[l]} m_{ij}^2 \Sigma_j.$$
 The $i$-th entry of the right hand side equals
$m_i^\top\Sigma m_i =\sum_{j\in[l]} m_{ij}^2 \Sigma_{j\in[l]}.$
Thus, the two sides are equal, which proves the first claim of the lemma. 
The second claim follows directly from the first claim, from the special case when the covariance of $v$ is diagonal.  The third claim is simply a restatement of the second one. 

\end{proof} 
We now apply the lemma as follows.
\benum
\item
Let us use the lemma for $v = \ep$ and $M = S$. Notice that we have $\Cov{v} = \Sigma$ is diagonal, so the right hand side  of the lemma is $\diag{S \Sigma S^\top} =  \diag \Cov{\hbeta}$, where the equality follows from our calculation before the lemma. Moreover, the left hand side is $\E (S\odot S) (\ep \odot \ep)  = (S\odot S) \vec\Sigma$, where we vectorize $\Sigma$, writing $\vec\Sigma = (\Sigma_{11}, \ldots, \Sigma_{nn})^\top$. The equality follows because $\Cov{\ep}=\Sigma$ is diagonal. Thus, by the lemma, we have 
$V = \diag \Cov{\hbeta} = (S\odot S) \vec\Sigma.$
\item
Let us now use the lemma for a second time, with $M = I$ and $v = \hep$. This shows that  $\E(\hep\odot\hep) = \diag \Cov{\hep}$. By linearity of expectation, we obtain 
$\E \hV = A \cdot \E(\hep\odot\hep) = A  \cdot \diag \Cov{\hep}.$
\item
Finally, let us use the lemma for the third time, with $M = Q$ and $v = \ep$. As in the first case, the left hand side equals $\E(\hep\odot\hep) = (Q\odot Q) \vec\Sigma$. The right hand side equals $\diag[M \diag\Cov{v}M^\top] = \diag{Q \Sigma Q}$, where we used that $Q$ is a symmetric matrix.  Now, $\Cov{\hep} = \Cov{Q\ep} =Q \Sigma Q$. Thus,  we conclude
$\diag\Cov{\hep} = \diag Q \Sigma Q= (Q \odot Q)\vec\Sigma$.
\eenum

Putting these conclusions together, we obtain that $\hV$ is unbiased, namely  $\E \hV = \diag \Cov{\hbeta}$, if
$A (Q \odot Q)\vec\Sigma= (S\odot S) \vec\Sigma.$
This system of linear equations holds for any $\Sigma$ if and only if
$A (Q \odot Q)= (S\odot S).$
If $Q\odot Q$ is invertible, then we have \eqref{ad}.
This shows that the original estimator $\hV$ has the required form, finishing the proof. 

For a sequence  $(w_p)_{p\ge 1}$, 
we can also estimate $w_p^\top S \Sigma S^\top w_p$,
the variance of $w_p^\top \hbeta$.  
Since we can use $(Q \odot Q)^{-1} (\hep \odot \hep)$
to estimate $\diag(\Sigma)$ in an unbiased way, and considering that $\Sigma$ is diagonal, an unbiased estimator of $w_p^\top S \Sigma S^\top w_p$ 
is $w_p^\top S \diag[(Q\odot Q)^{-1}(\hep \odot \hep)]S^\top w_p$.

\subsection{Proof of Proposition \ref{prop:known}}
\label{pflb}

To prove the lower bound, we first claim that for any symmetric matrix $A$,  
$$\rank A \odot A  \le \binom{\rank A+1}{2}.$$
Therefore, in order for $Q\odot Q$ to be invertible, we need 
$n  \le \binom{n-p+1}{2}.$
By solving the quadratic inequality, this is equivalent to $p \le [2n+1 - (8n+1)^{1/2}]/2$.

To prove the claim about ranks, let
$r$ be the rank of $A$, and let 
$A = \sum_{i=1}^r v_i v_i^\top$ be its eigendecomposition. Here $(v_i)_{i\in [r]}$ are orthogonal, but not necessarily of unit norm. Then, 
$$A \odot A = \left( \sum_{i=1}^r v_i v_i^\top\right) \odot  \left( \sum_{i=1}^r v_i v_i^\top\right)  
=  \sum_{i=1}^r (v_i\odot  v_i) (v_i\odot  v_i)^\top+ 2 \sum_{1 \le i<j \le r}^r (v_i \odot  v_j) (v_i \odot  v_j) ^\top. $$
This shows that the rank of $A \odot A$ is at most $r + r(r-1)/2 = r(r+1)/2$, as desired.

\subsection{Proof of Theorem \ref{thm:Tnorm}}
\label{pf:dd}

Our first step is to reduce to the case $\Gamma = I_p$. Indeed, we notice that we can write $X = Z \Gamma^{1/2}$, where $Z$ is the matrix with rows $z_i$. 
Hence, 
$Q =I_n- X (X^\top X)^{-1} X^\top = I_n-Z (Z^\top Z)^{-1} Z^\top.$
Therefore, we can take $\Gamma = I_p$. 

The next step is to reduce the bounds on eigenvalues to bounds on certain quadratic forms. 
For all $i\in[p]$,
let us define the 
$n\times n$
matrices $R_i = X^\top X - x_ix_i^\top = \sum_{j\neq i} x_j x_j^\top$. See Section \ref{pf:equiv} for a proof of the following result. 

\begin{lemma}[Reduction to quadratic forms]\label{qf} We have the following two bounds on the eigenvalues of $T=Q\odot Q$ 
with  $Q = I_n- X(X^\top X)^{-1}X^\top $:
$$\lambda_{\max}(T) \le \max_{i\in[p]}\frac{1}{1+x_i^\top R_i ^{-1}x_i},$$
and
$$\lambda_{\min}(T) \ge \min_{i\in[p]}\frac{1-x_i^\top R_i ^{-1}x_i}{(1+x_i^\top R_i ^{-1}x_i)^2}.$$
\label{equiv}
\end{lemma}

To bound these expressions, we will use the following well-known statement about concentration of quadratic forms; 
its short proof is provided in Section \ref{pf:quad_form}.  

\begin{lemma}[Concentration of quadratic forms, consequence of Lemma B.26 in \citet{bai2010spectral}] Let $x \in \RR^p$ be a random vector with i.i.d. entries and $\EE{x} = 0$, for which $\EE{(\sqrt{n}x_i)^2} = \sigma^2$ and $\sup_i \EE{(\sqrt{n}x_i)^{4+\eta}}$ $ < C$ for some $\eta>0$ and $C <\infty$. 
Moreover, let $A_p$ be a random $p \times p$ symmetric matrix independent of $x$, with uniformly bounded eigenvalues. Then 

$$\P(|x^\top A_p x - n^{-1} \sigma^2 \tr A_p|^{2+\eta/2}>t) \le C t^{-1} n^{-(1+\eta/4)}.$$
\label{quad_form}
\end{lemma}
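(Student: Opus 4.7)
The plan is to reduce the stated probability bound to a moment estimate via Markov's inequality: it suffices to show that
\[
\E\!\left[\, \bigl| x^\top A_p x - p^{-1}\sigma^2 \tr A_p \bigr|^{2+\eta} \,\right] = O\!\left(p^{-(1+\eta/4)}\right).
\]
Because $A_p$ is assumed independent of $x$, I would condition on $A_p$ throughout and treat it as a fixed symmetric matrix whose operator norm (and hence whose diagonal entries and Frobenius norm squared $\tr(A_p^2)/p$) is bounded by a constant uniformly in $p$. A direct calculation confirms that the conditional mean $\E[x^\top A_p x \mid A_p] = \sigma^2 p^{-1} \tr(A_p)$, so the centering in the statement is correct.

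The core of the argument is the classical split into diagonal and off-diagonal parts,
\[
x^\top A_p x - p^{-1}\sigma^2 \tr A_p \;=\; \underbrace{\sum_{i} A_{p,ii}\!\left(x_i^2 - \E x_i^2\right)}_{D} \;+\; \underbrace{\sum_{i\ne j} A_{p,ij}\, x_i x_j}_{O},
\]
followed by bounding each in $L^{2+\eta}$. The diagonal term $D$ is a sum of $p$ independent mean-zero random variables with bounded coefficients $A_{p,ii}$, whose individual $(2+\eta)$-moments scale like $p^{-(2+\eta)/2}$ under the $(4+\eta)$-moment hypothesis on $\sqrt{p}\,x_i$; the Rosenthal or Marcinkiewicz--Zygmund inequality then produces the required polynomial decay in $p$. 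For $O$, I would view the sum as a martingale with respect to the filtration $\mathcal{F}_k = \sigma(x_1,\ldots,x_k)$; the conditional quadratic variation is controlled by $\sigma^2 p^{-1}\tr(A_p^2) = O(1)$ thanks to the uniform eigenvalue bound on $A_p$, so Burkholder's inequality at order $(2+\eta)/2$---combined with the moment hypothesis applied to the martingale increments---yields a matching polynomial bound. Combining the two contributions via the triangle inequality in $L^{2+\eta}$ and applying Markov concludes.

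The main obstacle I anticipate is exponent book-keeping: the diagonal and off-diagonal contributions decay at slightly different rates in $p$, and one must verify that the slower of the two still meets the target exponent $-(1+\eta/4)$. A related subtlety is that the moment hypothesis $\E(\sqrt{p}\,x_i)^{4+\eta} < C$ is only marginally more than what is needed for a second-moment bound on the centered quadratic form, so one must be careful to apply the martingale inequality at the fractional order $(2+\eta)/2$ rather than at the more convenient order $2$. In practice, since the lemma is stated as an immediate consequence of Lemma B.26 in \citet{bai2009spectral}, the cleanest strategy is to invoke that classical result directly with the moment hypotheses we have; the outline above is the conceptual skeleton of the Bai--Silverstein argument.
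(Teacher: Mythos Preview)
Your plan coincides with the paper's: bound a moment of the centered quadratic form and apply Markov. The paper does not redo the diagonal/off-diagonal split you sketch; it simply quotes Bai--Silverstein's Lemma~B.26, substitutes $y=\sqrt{p}\,x/\sigma$, and uses the uniform eigenvalue bound on $A_p$ to control $\tr(A_p^2)\le pC^2$ and $\tr\bigl((A_p^2)^{q/2}\bigr)\le pC^q$.

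One exponent in your target needs adjusting, and it is exactly the ``book-keeping'' obstacle you flag. Bounding $\E\bigl|x^\top A_p x - p^{-1}\sigma^2\tr A_p\bigr|^{2+\eta}$ would require, in B.26 with $q=2+\eta$ (equivalently in your Rosenthal step for $D$), control of $\E|\sqrt{p}\,x_i|^{2q}=\E|\sqrt{p}\,x_i|^{4+2\eta}$, which the hypothesis does not supply. The paper instead takes $q=2+\eta/2$, so that $2q=4+\eta$ matches the moment assumption exactly, and obtains $\E|\cdot|^{2+\eta/2}\le Cp^{-(1+\eta/4)}$; Markov at this order already yields the stated probability bound, since the event $\{|\cdot|^{2+\eta}>C\}$ is just $\{|\cdot|>C^{1/(2+\eta)}\}$ and the threshold constant is immaterial. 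With that single change of moment order your outline goes through and matches the paper.
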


Proceeding with the proof of Theorem \ref{thm:Tnorm},
we can scale $X$ so that the variances of the entries of $X$ are $1/n$.
Define the following events:
\begin{equation*}\begin{aligned}
    &\Xi_1 =\bigcap_{i=1}^n \left\{ \left| \frac{1}{p} \tr R_i^{-1}- \frac{\gamma_{p,n}}{1-\gamma_{p,n}} \right|\leq n^{-0.9999} \right\}, 
    \\ &\Xi_2=\bigcap_{i=1}^n \left\{\left|x_i^\top R_i^{-1}x_i - n^{-1}\tr R_i^{-1}\right|<\xi\right\}.
\end{aligned}\end{equation*}

We obtain $P(\Xi_2^c) \le \xi^{-4-\delta/2} n^{-1-\delta/4} $ from Lemma \ref{quad_form} and by taking a union bound over $[n]$. 
Next, we will verify that 
$\P(\Xi_1^c)\le  5n^{-1-\delta/4}$.  Then by Lemma \ref{equiv}, and applying the inequality $\P(A\cap B) \ge 1-\P(A^c)-\P(B^c)$ with $A = \Xi_1$, $B=\Xi_2$, the proof will be concluded.  

Now to bound $\P(\Xi_1^c)$, according to the rank inequality, see Theorem A.43 of \cite{bai2010spectral}, 
we can equivalently show 
$\P\left(\left|p^{-1}\tr R^{-1} - \frac{\gamma_{p,n}}{1-\gamma_{p,n}}\right|\le n^{-0.9999}\right)>1-5n^{-1-\delta/4}$ for sufficiently large $n$.
We further define 
 \begin{equation*}\begin{aligned}
 &\Omega_1:=   \bigg\{\max_{i\in[n],\,j\in[p]}|x_{ij}| \leq n^{-0.001}\bigg\},\\
  &\Omega_2:= \bigcap_{i=1}^n \{(1-\sqrt{\gamma_{p,n}})^2/2 
    \le \lambda_{\min}(R)\le \lambda_{\max}(R) \le 2(1+\sqrt{\gamma_{p,n}})^2\}.
 \end{aligned} \end{equation*}
   Since $\E |X_{ij}|^{8+\delta} < \infty$ for $i\in[n],\,j\in[p]$, it can be checked that 
   $\P(\Omega_1^c) \le n^{-1-\delta/4}$ 
   by Chebyshev's inequality and taking a union bound. To bound $\P(\Omega_2^c)$,
   by the argument from Section 9.12.5 of \cite{bai2010spectral}, it can be readily checked that $\P(\Omega_2^c|\Omega_1)< n^{-\ell}$ for any fixed $\ell>0$. 
   Then 
   by taking $\ell$ large enough,
   $\P(\Omega_2^c) \le \P(\Omega_2^c | \Omega_1) + \P(\Omega_1^c) \le 2 \P(\Omega_1^c)$.

Let $v_0$ be a small positive constant, and take $x_\ell = (1-\sqrt{\gamma_{p,n}})^2/3, x_r = 3(1+\sqrt{\gamma_{p,n}})^2$. 
Denoting the imaginary unit by $\mathrm{i}$,
define the rectangular region 
$$\Upsilon =\{z = x_\ell + \mathrm{i}v: v \in [-v_0,v_0]\} \cup  \{z = x_r + \mathrm{i}v: v \in [-v_0,v_0]\} \cup \{z = x \pm  \mathrm{i}v_0: x \in [x_\ell,x_r]\}$$
in the complex plane.

For any $z$ in $\Upsilon$, let $m_n(z):= p^{-1}\tr (R- z I)^{-1}$, and $m_c(z)$ be the solution to the self-consistent equation $[m_c(z)]^{-1}= -z + (p/n)^{-1} [1+m_c(z)]^{-1}$ whose imaginary part has the same sign as that of $z$. 
This equation has a unique solution \citep{bai2010spectral}.

By Cauchy's integral formula, we have 
    \begin{equation}\label{trlimdif}\left(p^{-1}\tr R^{-1} - \frac{\gamma_{p,n}}{1-\gamma_{p,n}}\right) I(\Omega_1\cap \Omega_2) = \frac1{2\pi \mathrm{i}}\oint_{\Upsilon} \frac{m_n(z) - m_c(z)}{z} I(\Omega_1\cap \Omega_2) dz.
    \end{equation}
Defining $\Omega_3 = \{|m_n(z) - m_c(z)| \le n^{-0.9999}\}$, we have 
$\P(\Omega_3^c|\Omega_1)<n^{-\ell}$ for any fixed $\ell>0$. 
This is a consequence of the averaged local law from random matrix theory, see Theorem 3.5 of \cite{yang2020linear} for instance.
Specifically, we apply their result for 
$d_N = \gamma_{p,n}$ and $\Sigma = I$. 
Note that
for $\rho_{2c}$ defined in their equation (2.10),
$\text{supp}(\rho_{2c}) = [(1-\sqrt{\gamma_{p,n}})^2, (1+ \sqrt{\gamma_{p,n}})^2]$ thus 
with $D_{out}$ from their equation (3.10) and some small $\omega>0$,
$\Upsilon \subset D_{out}$. 
Additionally, the bounded support condition in their equation (3.15) is satisfied on $\Omega_1$. Then (3.21) in their Theorem 3.5 implies our desired bound.

 By taking $\ell$ large enough,
$\P(\Omega_3^c) \le \P(\Omega_3^c|\Omega_1)+\P(\Omega_1^c) \le 2 \P(\Omega_1^c)$.
Therefore, combining the above bounds 
for the probabilities of $\Omega_1,\Omega_2,\Omega_3$ and by \eqref{trlimdif}, we have
\begin{equation*}
\begin{aligned}\P\left(\left|p^{-1}\tr R^{-1} - \frac{\gamma_{p,n}}{1-\gamma_{p,n}}\right| \le  n^{-0.9999}\right) &\ge \P(\Omega_1 \cap \Omega_2 \cap \Omega_3)\\
& \ge 1- \P(\Omega_1^c)-\P( \Omega_2^c)-\P(\Omega_3^c) 
\ge  1-5 \P(\Omega_1^c).
\end{aligned}
\end{equation*}
 This finishes the argument.

\subsection{Proof of Lemma \ref{equiv}}
\label{pf:equiv}

We need to bound the smallest and largest eigenvalues of $T=Q\odot Q$. 
Now, for all $i,j\in[n]$, 
$T_{ij} = Q_{ij}^2 = (\delta_{ij} - x_i^\top R^{-1} x_j)^2$, where $R = X^\top X$ and $\delta_{ij}$ is the Kronecker delta which equals unity if $i=j$, and zero otherwise. 
We will use the following well-known rank-one perturbation formula for an invertible matrix $T$ and a vector $u$ of conformable size: 
$$(uu^\top +T)^{-1} = T^{-1} - \frac{T^{-1} u u^\top T^{-1}}{1+ u^\top T^{-1}u}.$$

We will also use a ``leave-one-out'' argument which has roots in random matrix theory \citep[see e.g.,][]{bai2010spectral, paul2014random, yao2015large}. 
For any $i\in[n]$,
letting 
$R_i = X^\top X - x_ix_i^\top = \sum_{j\neq i} x_j x_j^\top$,
we have $R^{-1} = R_i ^{-1} - \frac{R_i ^{-1}x_ix_i^\top R_i ^{-1}}{1+x_i^\top R_i ^{-1}x_i}.$
For all $i,j\in[n]$, 
the quantity that is squared in the $i,j$-th entry of $T$ is
 thus
$$x_i^\top R^{-1} x_j 
=x_i^\top R_i ^{-1} x_j -  \frac{x_i^\top  R_i ^{-1}x_i \cdot x_i^\top R_i ^{-1} x_j}{1+x_i^\top R_i ^{-1}x_i}
=\frac{x_i^\top R_i ^{-1} x_j }{1+x_i^\top R_i ^{-1}x_i}.$$
Also, for all $i\in[n]$,  we have
$x_i^\top R^{-1} x_i
= \frac{x_i^\top R_i ^{-1} x_i}{1+x_i^\top R_i ^{-1}x_i},
 $
so that the diagonal terms are 
$$T_{ii} =  (1-x_i^\top R^{-1} x_i)^2 =\frac{1}{(1+x_i^\top R_i ^{-1}x_i)^2}.$$

By the Gershgorin disk theorem \citep[][Thm 6.1.1]{horn1990matrix}, we have 
$\lambda_{\max}(T) \le $ $ \max_{i\in[n]}$ $\left(T_{ii} + \sum_{j\neq i} |T_{ij}| \right).$
Thus,
$$\lambda_{\max}(T)  \le \max_{i\in[n] }\frac{1+\sum_{j\neq i} (x_i^\top R_i ^{-1} x_j)^2}{(1+x_i^\top R_i ^{-1}x_i)^2}.$$

Now, the sum in the numerator can be written as $x_i^\top R_i ^{-1} (\sum_{j\neq i} x_j x_j^\top) R_i ^{-1} x_i = x_i^\top R_i ^{-1} x_i$. Thus, the upper bound simplifies to
$\max_{i\in[n]}1/(1+x_i^\top R_i ^{-1}x_i).$
Similarly, for the smallest eigenvalue, by the Gershgorin disk theorem \citep[][Thm 6.1.1]{horn1990matrix}, we have 
$\lambda_{\min}(T) \ge \min_{i\in[n]}$ $\left(T_{ii} - \sum_{j\neq i} |T_{ij}| \right).$
We can express for all $i\in[n]$
$$T_{ii} - \sum_{j\neq i} |T_{ij}| = \frac{1-x_i^\top R_i ^{-1}x_i}{(1+x_i^\top R_i ^{-1}x_i)^2}.$$
Hence
$\lambda_{\min}(T) \ge \min_{i\in[n]}(1-x_i^\top R_i ^{-1}x_i)(1+x_i^\top R_i ^{-1}x_i)^2,$
finishing the proof. 

\subsection{Proof of Lemma \ref{quad_form}}
\label{pf:quad_form}
We will use the following Lemma quoted from \cite{bai2010spectral}. 

\begin{lemma}[Trace Lemma, Lemma B.26 of \cite{bai2010spectral}]\label{a4}
Let $y$ be a p-dimensional random vector of i.i.d. elements with mean zero. 
Suppose that $\EE{y_i^2} = 1$ for all $i\in[p]$, 
and let $A_p$ be a fixed $p \times p$ matrix. 
Then for any $q\ge 2$,
$$\EE{|y^\top A_p y-\tr A_p|^q} \le C_q \left\{\left(\EE{y_1^4}\tr[A_pA_p^\top]\right)^{q/2}+\EE{y_1^{2q}}\tr[(A_pA_p^\top)^{q/2}]\right\},$$ 
for some constant $C_q$ that only depends on $q$. 
\label{trace_lemma}
\end{lemma}

\begin{proof}
Under the conditions of Lemma \ref{quad_form}, the operator norms $\|A_p\|_2$ are bounded by a constant $C$, thus $\tr[(A_pA_p^\top)^{q/2}] \le p C^q$ and $\tr[A_pA_p^\top] \le p C^2$. 
Consider now a random vector $x$ with the properties assumed in the present lemma. For $y = \sqrt{n}x/\sigma$ and $q = 2+\eta/2$ with $\eta>0$, using that $\EE{y_i^{2q}}\le C$ and the other the conditions in Lemma \ref{quad_form}, Lemma \ref{trace_lemma} thus yields
$$\frac{n^q}{\sigma^{2q}}\EE{|x^\top A_p x-\frac{\sigma^2}{n}\tr A_p|^q} \le C \left\{\left( p C^2\right)^{q/2}+pC^q\right\},$$ 
or equivalently $\EE{|x^\top A_p x-\frac{\sigma^2}{n}\tr A_p|^{2+\eta/2}} \le C n^{-(1+\eta/4)}$. 
By Markov's inequality applied to the $2+\eta$-th moment of $\varepsilon_p = x^\top A_p x-\frac{\sigma^2}{n}\tr A_p$, we obtain as required that for $t>0$,
$\P(|\ep_p|^{2+\eta/2}>t) \le C t^{-1} n^{-(1+\eta/4)}.$
\end{proof}

\subsection{Proof of Proposition \ref{df}}
\label{dfpdf}

We need to evaluate $\E \hV^{\odot 2} = \E \hV \odot \hV \in \R^p$.  Note that this vector is the diagonal of $\E \hV \hV^\top$, which is equal to
\[
\begin{aligned}
\E \hV \hV^\top &= \E A (\hep \odot \hep ) (\hep \odot \hep)^\top A^\top
= \E A \left[(\hep \hep^\top ) \odot  (\hep \hep^\top) \right] A^\top = A \E \left[(\hep \hep^\top ) \odot  (\hep \hep^\top) \right] A^\top.
\end{aligned}
\]
Note that $\hep \hep^\top = Q \ep \ep^\top Q$,
since the residuals $\hep = Q \ep$. Using this expression and recognizing that $\ep$ has \iid $\N(0, \sigma^2)$ entries, 
for any $i,j\in[n]$,
the $(i,j)$-th element of $\E (\hep \hep^\top)^{\odot 2}$ is
\[
\begin{aligned}
&\E \left( \sum_{1 \le l, k \le n} Q_{il} \ep_l \ep_k Q_{kj} \right)^2\\
& = \sum_{l \ne k} \E\left(  Q_{il}^2 Q_{jk}^2 \ep_l^2\ep_k^2 + Q_{il} Q_{jk}Q_{ik} Q_{jl}\ep_k^2\ep_l^2 + Q_{il} Q_{jl} Q_{ik} Q_{jk} \ep_l^2\ep_k^2 \right)  + \sum_{l=1}^n \E Q_{il}^2Q_{jl}^2 \ep_l^4\\
& = \sum_{l \ne k} \left(  Q_{il}^2 Q_{jk}^2 \sigma^4 + Q_{il} Q_{jk}Q_{ik} Q_{jl} \sigma^4 + Q_{il} Q_{jl} Q_{ik} Q_{jk} \sigma^4 \right)  + \sum_{l=1}^n Q_{il}^2Q_{jl}^2 3\sigma^4\\
& = \sigma^4\sum_{l \ne k} \left(  Q_{il}^2 Q_{jk}^2 + 2Q_{il} Q_{jk}Q_{ik} Q_{jl} \right)  + 3\sigma^4\sum_{l=1}^n Q_{il}^2Q_{jl}^2\\
& = \sigma^4\sum_{1 \le l, k \le n} \left(  Q_{il}^2 Q_{jk}^2 + 2Q_{il} Q_{jk}Q_{ik} Q_{jl} \right)
= \sigma^4\sum_{1 \le l, k \le n} Q_{il}^2 Q_{jk}^2 + 2\sigma^4\left( \sum_{l=1}^n Q_{il}Q_{jl}\right)^2.
\end{aligned}
\]
To proceed, we recognize that $\sum_{1 \le l, k \le n} Q_{il}^2 Q_{jk}^2$ is the $(i,j)$-th element of
\[
\left[ (Q\odot Q ) 1_n\right] \left[ (Q\odot Q ) 1_n\right]^\top = (Q\odot Q ) 1_n 1_n^\top (Q\odot Q ),
\]
and $\left( \sum_{l=1}^n Q_{il}Q_{jl}\right)^2$ is the $(i,j)$-th element of
$Q^2 \odot Q^2 = Q \odot Q$.

Summarizing the calculation above, we obtain
\[
\E (\hep \hep^\top)^{\odot 2} =  \sigma^4 (Q\odot Q ) 1_n 1_n^\top (Q\odot Q ) + 2\sigma^4 Q \odot Q,
\]
from which it follows that
\[
\begin{aligned}
\E \hV \odot \hV &= \diag\left[A \left( \sigma^4 (Q\odot Q ) 1_n 1_n^\top (Q\odot Q ) + 2\sigma^4 Q \odot Q \right) A^\top \right]\\
                       & = \sigma^4\diag\left[ A (Q\odot Q ) 1_n 1_n^\top (Q\odot Q ) A^\top \right]+ 2\sigma^4 \diag\left[A (Q \odot Q) A^\top \right]\\
                       & = \sigma^4\diag\left[ (S\odot S ) 1_n 1_n^\top (S\odot S)^\top \right]+ 2\sigma^4 \diag\left[(S \odot S) (Q \odot Q)^{-1} (S \odot S)^\top \right].
\end{aligned}
\]
Note that $V = \sigma^2 \diag\left[(X^\top X)^{-1} \right]$ due to the assumption of homoskedasticity.
Recalling \eqref{E}, 
we find \eqref{eq:d_formula}, finishing the proof.

\subsection{Calculation for the case when \texorpdfstring{$p = 1$}{p = 1}}
\label{p1pf}
We compute each part of the unbiased estimator in turn. We start by noticing that $S = (X^\top X)^{-1} X^\top = X^\top$ is a $1\times n$ vector. We continue by calculating $Q \odot Q$, where $Q =  I- X(X^\top X)^{-1}X^\top   = I- XX^\top$. Thus, 
\begin{align*}
Q_{ij}^2 &=
\begin{cases}
X_i^2 X_j^2 ,& i\neq j\\[2ex]
(1-X_i^2)^2, &\text{else.}
\end{cases}
\end{align*}
Denoting $u = X\odot X$, and $D = I - 2 \diag(X\odot X)$, we can write 
$Q\odot Q = D + uu^\top.$
Now, the estimator takes the form  $\hV = (S\odot S) (Q\odot Q)^{-1}(\hep\odot\hep)$. Hence, we need to calculate $(S\odot S) (Q\odot Q)^{-1} = (X\odot X) (D + uu^\top)^{-1}$. We use the rank-one perturbation formula
$u^\top( D + uu^\top)^{-1} = \frac{ u^\top D^{-1}}{u^\top D^{-1}u+ 1}.$
In our case, 
$$u^\top D^{-1}u = \sum_{j=1}^n \frac{u_j^2}{D_j}= \sum_{j=1}^n  \frac{ X_j^4}{1-2X_j^2},$$
and 
$u^\top D^{-1}$ has entries $X_j^2/(1-2X_j^2)$ for $j\in[n]$. 
This leads to the desired final answer:
$$\hV = u^\top( D + uu^\top)^{-1}\hep\odot\hep = \frac{ \sum_{j=1}^n  \frac{X_j^2}{1-2X_j^2} \hep_j^2}{1+\sum_{j=1}^n  \frac{ X_j^4}{1-2X_j^2}}.$$
Next,
since $X^\top X = 1$, we have for $E$ from \eqref{E} that $E=1$. Finally, for $d$ from \eqref{eq:d_formula},
since $S=X^\top$, $u = X\odot X$, and $Q\odot Q = D + uu^\top$, so that $u^\top 1_n = 1$, we find
$$d = \frac{1}{u^\top (D + uu^\top)^{-1} u} = 1+\frac{1}{u^\top D^{-1}u}=1+\frac{1}{ \sum_{j=1}^n  \frac{ X_j^4}{1-2X_j^2}},
$$
as desired.

\subsection{Proof of Proposition \ref{bc}}
\label{pf:bc}

To compute the bias of White's estimator defined in \eqref{w}, we proceed as follows. First we need to compute its expectation, 
$$\E\hC_{\textnormal{W}} = (X^\top X)^{-1}[X^\top \E \diag(\hep \odot \hep) X] (X^\top X)^{-1}.$$
As we saw,
$\E (\hep \odot \hep)=\diag\Cov{\hep} = \diag Q \Sigma Q= (Q \odot Q)\vec\Sigma.$
Thus, 
$$\diag\E\hC_{\textnormal{W}} = \diag[S  \diag [(Q \odot Q)\vec\Sigma] S^\top] = (S\odot S)  (Q \odot Q) \vec\Sigma .$$

Again, as we saw, $V = \diag \Cov{\hbeta} = (S\odot S)\vec\Sigma.$
Therefore, the bias of White's estimator is as in \eqref{bw}.

To compute the bias of MacKinnon-White's estimator, we proceed similarly, starting with its expectation:  
$$\E\hC_{\mathrm{MW}} = (X^\top X)^{-1}[X^\top \E \diag(Q)^{-1}\diag(\hep \odot \hep) X] (X^\top X)^{-1}.$$
In this equation, the expression $\diag(Q)$ is interpreted as the diagonal matrix  whose entries are those on the diagonal of $Q$. Thus, 
$$\diag\E\hC_{\mathrm{MW}} = \diag[S  \diag(Q)^{-1}\diag [(Q \odot Q)\vec\Sigma] S^\top] = (S\odot S)  (Q \odot Q) \diag(Q)^{-1}\vec\Sigma. $$
Thus the bias is as in \eqref{bmw}, finishing the proof.

\subsection{Proof of Theorem \ref{r}}
\label{pf:r}

We aim to bound $\|\hV - V\|$, where $\|\cdot\|$ denotes usual Euclidean vector norm. 
Recalling that 
$V =  (S\odot S) \vec\Sigma$
and
$\hV =  (S\odot S) (Q\odot Q)^{-1}(\hep\odot\hep)$,
where $S = (X^\top X)^{-1} X^\top$,
we have
\begin{equation}\label{hVVdif}\|\hV - V\| \le \| S\odot S\|_{\op} \|(Q\odot Q)^{-1}\|_{\op} \|(\hep\odot\hep)- (Q\odot Q) \vec\Sigma\|.
\end{equation}

We will find upper bounds for each term in the above product.

\benum

\item To bound $\| S\odot S\|_{\op}$,
Schur's inequality \citep[e.g.,][Thm. 5.5.1]{horn1991topics}, states that
$\| S\odot S\|_{\op} \le \|S\|_{\op}^2.$
Moreover, as $\|S\|_{\op} = 1/\sigma_{\min}(X)$, 
it follows by  (9.7.9) in \cite{bai2010spectral} that
for any constant $c>0$ and $\ell>0$, 
$\sigma_{\min}(X) \ge \sigma_{\min}(\Gamma^{1/2}) (n^{1/2} - p^{1/2} - c)$ holds with probability $1- o(n^{-\ell})$.
Thus, denoting $$\mathcal{E}_{1,n} :=\left\{ n\|S\odot S\|\le c\frac{1}{\sigma_{\min}(\Gamma)(1 - \gamma_{p,n}^{1/2})^2}\right\},$$
we have $\P (\mathcal{E}_{1,n}) \ge 1- o(n^{-\ell})$
for any constant $c>1$ and $\ell>0$.

\item To bound  $ \|(Q\odot Q)^{-1}\|_{\op}$,
denoting $$\mathcal{E}_{2,n}:= \left\{\|(Q \odot Q)^{-1}\|_{\op} \le c \frac{1}{(1 - \gamma_{p,n})(1-2\gamma_{p,n})} \right\},$$
by Theorem \ref{thm:Tnorm}, we have
$\P(\mathcal{E}_{2,n})\ge 1-O(n^{-1-\delta/4}) $ for any $c>1$.


\item To bound  $\alpha:= \|(\hep\odot\hep)- (Q\odot Q) \vec\Sigma\|$,
we can express $\alpha^2 = \sum_{i=1}^n \alpha_i^2$, where 
for all $i\in[n]$,
$\alpha_i^2 = (\hep_i^2 - (q_i \odot q_i)^\top \vec\Sigma)^2.$
From the earlier unbiasedness argument,
$\E\hep_i^2 = (q_i \odot q_i)^\top \vec\Sigma$, and thus $\E \alpha_i^2 = \Var{\hep_i^2}$.
A simple calculation shows that, 
with $\Gamma_k = \E\ep_k^4 $
 for all $k\in[n]$, we have
$$ \Var{\hep_i^2} = \sum_{k=1}^n q_{ik}^4 (\Gamma_k - 3 \Sigma_k^2) + 2 [ (q_i \odot q_i)^\top \vec\Sigma]^2.$$

Now  for all $k\in[n]$,
the excess kurtosis can be bounded as $\Gamma_k - 3 \Sigma_k^2 \le (C-3) \Sigma_k^2$. Therefore, we can bound by Markov's inequality:
$$\mathbb{P}(\alpha \ge t) \le \frac{\sum_{i=1}^n \E\alpha_i^2}{t^2} 
= \frac{(C-1)\sum_{i=1}^n  [ (q_i \odot q_i)^\top \vec\Sigma]^2}{t^2} 
\le \frac{(C-1)\cdot \left\| (Q\odot Q) \vec\Sigma\right\|^2}{t^2}.$$
According to  Theorem \ref{thm:Tnorm},
$\|Q\odot Q\|_{\op} \le c (1-\gamma_{p,n})$ 
holds with probability $1-C' n^{-1-\delta/4}$ for some positive constant $C'$ and any constant $c>1$. Hence
$$ \P(\alpha \ge t) \le \frac{2c(1-\gamma_{p,n})^2 \|\vec\Sigma\|^2}{t^2} + C' n^{-1-\delta/4}.$$

\eenum
In conclusion, we have for sufficiently large $n$ and $p$ with $\gamma_{p,n}<1/2$ that 
\begin{equation*}
    \begin{aligned}
        &\P \left( \frac{\|\hV - V\|}{\|\vec\Sigma\|}>\frac{t}{n}\right) \le 
        \P \left( \alpha n\|S\odot S\|_{\op} \| (Q\odot Q)^{-1}\|_{\op}> t\|\vec\Sigma\|\right)\\
        & \le \mathbb{P} \left( \alpha n\|S\odot S\|_{\op} \| (Q\odot Q)^{-1}\|_{\op}> t\|\vec\Sigma\| \bigg| \mathcal{E}_{1,n}, \mathcal{E}_{2,n}\right)+ \P(\mathcal{E}_{1,n}^c)+ \P(\mathcal{E}_{2,n}^c)\\
        & \le \frac{2c}{t^2}\frac{1}{\left[\sigma_{\min}(\Gamma)(1-\gamma_{p,n}^{1/2})^2 (1-2 \gamma_{p,n})\right]^2} + C' n^{-1-\delta/4}.
    \end{aligned}
\end{equation*}
This proves the required result.

\subsection{Proof of Theorem \ref{thmhatviasym}}
\label{sec:pfhatvnormal}

For the given $Z$, 
any matrices $M_1,M_2\in \mathbb{R}^{n\times n}$ and any vector $v\in \mathbb{R}^p$, we find
\begin{equation*}
\begin{aligned}
&v^\top \diag\left\{M_1 [(M_2 Z)\odot(M_2 Z)]\right\} v = \sum_{i=1}^n v_i^2 \sum_{j=1}^n M_{1,ij}\left(\sum_{k=1}^n M_{2,jk}Z_k\right)^2\\
&= \sum_{k_1,k_2=1}^n Z_{k_1}Z_{k_2} M_{2,jk_1}\left(\sum_{i,j=1}^n v_i^2 M_{1,ij}\right)M_{2,jk_2} 
= Z^\top M_2^\top \diag[(v\odot v)^\top M_1]M_2 Z.
\end{aligned}
\end{equation*}
Taking  $v = Sw_p$, $M_1 = (Q\odot Q)^{-1}$, $M_2 = Q\Sigma^{1/2}$ above, 
we find that \begin{equation}\label{quadformZ}w_p^\top S (\diag\widehat{\vec\Sigma}) S^\top w_p=Z^\top G(w_p) Z,\end{equation} 
where $G(w_p)$ is defined in \eqref{gw}.

Recall the following claim from e.g., \cite{bai2010spectral}. 
Let $X=\left(X_1, \ldots, X_n\right)$, where $X_i$, $i\in[n]$, are i.i.d.~real random variables with mean zero and variance one. Let $B=\left(b_{i j}\right)_{i,n\in[n]}$ be a real symmetric matrix. Then we have
\begin{equation}\label{Varform}
\Var (X^\top B X) =\left(\E\left|X_1\right|^4-3\right) \sum_{i=1}^n b_{i i}^2 + 2 \tr B^2.
\end{equation}
Let 
$g:\R^n\to \R$ be defined for all $z\in\R^n$ by
$g(z) = w_p^\top S (\diag\widehat{\vec\Sigma}) S^\top w_p = z^\top G(w_p) z$. 
By the moment assumptions on the entries of $Z$, we have \begin{equation}\label{VhatV}
\min\{C_4-1,2\}\|G(w_p)\|_{\Fr}^2 \le \Var{g(Z)} \le \max\{C_4-1,2\}\|G(w_p)\|_{\Fr}^2 \le (C_4+1)\|G(w_p)\|_{\Fr}^2.\end{equation}

To use the second order Poincar\'e inequality, see \cite{chatterjee2009fluctuations}, Theorem 2.2, we need to bound the following quantities:
$$\kappa_0 = \left(\E \sum_{j=1}^n \left|\frac{\partial g}{\partial z_j}(Z)\right|^4\right)^{1/2},\quad 
\kappa_1 = [\E \|\nabla_z g(Z)\|^4]^{1/4}, \quad 
\kappa_2 = [\E \|\nabla^2_z g(Z)\|_{\op}^4]^{1/4}.$$

In the following,
we denote for all $j\in[n]$ by $G_j^\top$ the $j$-th row of $G=G(w_p)$, and omit the dependence on $w_p$ for simplicity. A direct calculation yields
 \begin{equation*}
    \nabla g = 2 G Z, \quad \frac{\partial g}{\partial z_j} = 2 G_j^\top Z, \quad \nabla^2 g = 2G.
\end{equation*}

We have
\begin{equation*}
    2^{-4}\E \left|\frac{\partial g}{\partial z_j}(Z)\right|^4 = \E(Z^\top G_j G_j Z)^2 \le 2\Var Z^\top G_j G_j^\top Z + 2 C_4 \|G_j^\top G_j\|^2 \leq (4C_4+2) \|G_j\|^4,
\end{equation*}
where we use \eqref{Varform} in the last step.
Then
$\kappa_0  \leq 4 (4C_4+2)^{1/2} 
\left(\sum_{j\in[n]} \|G_j\|^4\right)^{1/2} \leq 4 (4C_4+2)^{1/2} \|G\|_{\Fr}\|G\|$.
We also have $\kappa_2=2\|G\|$ and
\begin{equation*}
    2^{-1}\kappa_1 =(\E (Z^\top G^\top G Z)^2)^{1/4} \leq (4C_4+2)^{1/4} \|G^\top G\|_{\Fr}^{1/2} \leq (4C_4+2)^{1/4}
    \|G\|_{\Fr}.
\end{equation*}
Therefore, we have \begin{equation*}\begin{aligned}
     &d_{\mathrm{TV}}\left(\frac{w_p^\top S (\diag\widehat{\vec\Sigma}) S^\top w_p- w_p^\top S\Sigma S^\top w_p}{\sqrt{\Var{[w_p^\top S (\diag\widehat{\vec\Sigma}) S^\top w_p] }}},  \N(0,1)\right) \\
     &\le \frac{2\sqrt{5}(c_1 c_2 \kappa_0+c_1^3 \kappa_1 \kappa_2)}{\min\{C_4-1,2\}\|G\|_{\Fr}^2} \le \frac{\left[8\sqrt{5}(4C_4+2)^{1/2} c_1 c_2+8\sqrt{5}(4C_4+2)^{1/4} c_1^3 \right] \|G\|}{\min\{C_4-1,2\}\|G\|_{\Fr}}.
\end{aligned}\end{equation*}
This proves the desired claim.

\subsection{Proof of Proposition \ref{lemWnorm}}\label{sec:pflemWnorm}
\begin{proof}

We consider $\|G(w_p)\|_{\Fr}$ first. For any vector $v\in \mathbb{R}^p$,  we find that \begin{equation}\label{Wifr}
\begin{aligned}
   & \|Q \diag(v) Q\|_{\Fr}^2 = \sum_{j,k=1}^n
    \left(\sum_{\ell=1}^n Q_{j\ell} v_{\ell} Q_{\ell k}\right)^2 \\&= \sum_{j,k=1}^n \sum_
    {\ell_1,\ell_2=1}^n Q_{j\ell_1}v_{\ell_1}Q_{\ell_1 k}Q_{j\ell_2}v_{\ell_2}Q_{\ell_2 k} =\sum_{\ell_1,\ell_2=1}^n v_{\ell_1} [(Q^2)_{\ell_1\ell_2}]^2 v_{\ell_2} = v^\top (Q\odot Q) v ,
\end{aligned}\end{equation}
where we use condition \ref{cond3lemWnorm} in the first step, and $Q^2 =Q$ in the second last step.\\

Recall that for $j\in[n]$,
$S_{.j}$ is the $j$-th column of $S$.
 Then
\begin{equation}\label{lbdwShad}\begin{aligned}\|(w_p^\top S) \odot  (w_p^\top S)\| = \left(\sum_{j=1}^n  (w_p^\top S_{.j})^4 \right)^{1/2}  &\geq n^{-1/2}\sum_{j=1}^n (w_p^\top S_{.j})^2 \\& = n^{-1/2}w_p^\top S S^\top w_p   =  n^{-1/2}w_p^\top (X^\top X)^{-1} w_p.
\end{aligned}\end{equation}
Substituting $v^\top =[(w_p^\top S) \odot  (w_p^\top S)] (Q\odot Q)^{-1} $ into \eqref{Wifr}, 
by the above bound, 
and by the conditions $\|Q\odot Q\|\ge c$ and $\lambda_{\min}(\Sigma)>c$, we conclude that  
$$\|G(w_p)\|_{\Fr} =\Omega(n^{-1/2}[\lambda_{\max}(X^\top X)]^{-1}).$$


The upper bound for $\|G(w_p)\|$ is obtained by \begin{equation*}\begin{aligned}
\|G(w_p)\| &\le \|Q\Sigma Q\|\|\diag(\left[(w_p^\top S)\odot (w_p^\top S)\right] (Q \odot Q)^{-1})\| \\ 
& \le C \max_{j\in[n]}\left|\left[(w_p^\top S)\odot (w_p^\top S)\right] (Q \odot Q)^{-1}e_j\right| = o \left(n^{-1/2}[\lambda_{\min}(X^\top X)]^{-1}\right),
\end{aligned}\end{equation*}
where the second step uses condition \ref{cond3lemWnorm} and $\|Q\|\le 1$, and the third step uses conditions \ref{cond1lemWnorm} and \ref{cond2lemWnorm}.

In the remainder, 
we show that the first two conditions hold 
probability tending to one under the random design $X$ satisfying the conditions of Theorem \ref{thm:Tnorm}. 
In a matrix form, write $X = Z \Gamma^{1/2}$ with the $i$-th row being $x_i= \Gamma^{1/2} z_i$ for all $i\in[n]$,
where $z_i$ has independent entries with zero mean, unit variance and finite $(8+\delta)$-th moment. The bound for $Q\odot Q$ in condition \ref{cond1lemWnorm} holds with probability tending to one, as a consequence of Theorem \ref{thm:Tnorm}.

Next we verify condition \ref{cond2lemWnorm}. 
For any sequence of deterministic vectors $w_p\in \mathbb{R}^p$ with a bounded norm, we have 
$$\E|w_p^\top (Z^\top Z)^{-1} z_k|^{8+\delta}
\le  
\E \bigg|w_p^\top \big(\sum_{i\neq k} z_i z_i^\top\big)^{-1} z_k\bigg|^{8+\delta}
\le C \E\biggl\|w_p^\top \bigg(\sum_{i\neq k} z_i z_i^\top\bigg)^{-1}\biggl\|^{8+\delta} 
= O(n^{-8-\delta}),$$
where in the first step we use the Sherman–Morrison formula 
$$w_p^\top (Z^\top Z)^{-1} z_k 
= w_p^\top   \bigg(\sum_{i\neq k} z_i z_i^\top \bigg)^{-1} z_k/ \bigg[1+z_k^\top \bigg (\sum_{i\neq k} z_i z_i^\top \bigg)^{-1} z_k \bigg]
\leq  w_p^\top   \bigg(\sum_{i\neq k} z_i z_i^\top \bigg)^{-1} z_k.$$ and in the second step we use the moment bound $\E|w_p^\top z_k|^q \le C\|w\|^q$ for any $0\le q \le 8+\delta$.
This moment bound can be checked by applying Lemma \ref{a4} with $A_p = w_p w_p^\top$ and using $\tr(w_p w_p^\top) = \|w_p\|^2$.

Therefore \begin{equation*}\begin{aligned} \P &  \left(\max_{k\in[n]}\left|w_p^\top S_{.k}\right|>n^{-7/8}\lambda_{\min}^{-1/2}(\Gamma) \right) 
 =  \P 
\left(\max_{k\in[n]}\left|w_p^\top \Gamma^{-1/2}(Z^\top Z)^{-1}z_k\right|> n^{-7/8}\lambda_{\min}^{-1/2}(\Gamma) \right)\\
&\le n \mathbb{P} \left(\left|w_p^\top \Gamma^{-1/2}(Z^\top Z)^{-1}z_k\right|> n^{-7/8}\lambda_{\min}^{-1/2}(\Gamma)\right) \\
& \le n^{1+\frac{7}{8}(8+\delta)}\lambda_{\min}^{\frac{1}{2}(8+\delta)}(\Gamma)\E |w_p^\top \Gamma^{-1/2}(Z^\top Z)^{-1}z_k|^{8+\delta} = O(n^{-\delta/8}).
\end{aligned}\end{equation*}

We further find $\lambda_{\min}^{-1}(\Gamma) = O_P( n[\lambda_{\min}(X^\top X)]^{-1}),$
 where  we use $\lambda_{\min}(X^\top X) \le \lambda_{\max}(\Gamma) \lambda_{\min}(Z^\top Z) $ $= O_P(n \lambda_{\max}(\Gamma)) $
 and that $\kappa(\Gamma)$ is bounded.
 Then we find 
 $$\max_{k\in[n]}|w_p^\top S_{.k}| = O_P(n^{-3/8}[\lambda_{\min}(X^\top X)]^{-1/2}]).$$
This shows the validity of condition \ref{cond2lemWnorm} with probability tending to one.
\end{proof}

\subsection{Consistency of the SNR estimator}\label{sec:SNR}
The following result shows that the SNR estimator given in \eqref{eq:snr_estimator} is ratio-consistent.
\begin{proposition}[Ratio-consistency of SNR estimator]
Assume the conditions of Theorem \ref{thmhatviasym} on 
the noise $\ep$, 
and conditions \ref{cond1lemWnorm} and \ref{cond3lemWnorm} of 
Proposition \ref{lemWnorm} on the data matrix $X$. 
In addition, suppose $\|\beta\| = \Omega(1)$ and $\lambda_{\min}(n^{-1} X^\top X)\asymp 1$.
For $\widehat{\snr}$ from \eqref{eq:snr_estimator},
and the signal-to-noise ratio $\snr = n \|\beta\|^2/\tr \Sigma$,
we have $\widehat{\snr}/\snr \to_P 1$.
\end{proposition}

\begin{proof}
We already know that the numerator and the denominator of $\widehat{\snr}$ are unbiased estimators for $\|\beta\|^2$ and $n^{-1}\tr \Sigma$, respectively. The conclusion follows 
by  applying Slutsky's theorem
if we can show 
\begin{equation}\label{hbetapart}
    (\|\hbeta\|^2 - 1_p^\top \hV)/\|\beta\|^2 \to_P 1,
\end{equation}
and \begin{equation}\label{hVpart}
   \frac{1_p^\top (Q \odot Q)^{-1} (\hep \odot \hep)}{\tr \Sigma}\to_P 1.
\end{equation}

 We show \eqref{hbetapart} first by verifying that $\mathrm{Var}\left[(\|\hbeta\|^2 - 1_p^\top \hV)/\|\beta\|^2\right] \to 0$.
We have a decomposition for $\|\hbeta\|^2$, given as $\|\hbeta\|^2 = \|\beta + S \ep\|^2 = \|\beta\|^2 +2 \beta^\top S \ep + \ep^\top S^\top S \ep$. 
By the assumptions $\|\beta\| = \Omega(1)$, $\lambda_{\min}(X^\top X) \asymp n$, and $\|\Sigma\|\asymp 1$, we find
\begin{equation}\label{varhbeta}
\begin{aligned}\mathrm{Var}(\|\hbeta\|^2) &\le 8 \mathrm{Var}[\beta^\top S \ep] + 2 \mathrm{Var}[\ep^\top S^\top S \ep] \\
& \le C\beta^\top (X^\top X)^{-1}X^\top \Sigma  X (X^\top X)^{-1} \beta + C \|\Sigma S^\top S\|_{\Fr}^2 = O(\|\beta\|^2 n^{-1}).
\end{aligned}
\end{equation}
For $1_p^\top \hV$, using \eqref{quadformZ} with $w_p= e_i$ and summing over $i \in [n]$, 
we find $1_p^\top \hV = Z^\top \Sigma^{1/2}Q \diag[1_p^\top (S\odot S) (Q\odot Q)^{-1}] Q \Sigma^{1/2} Z$.
Then
\begin{align*}
    \mathrm{Var}(1_p^\top \hV) \asymp \|\Sigma^{1/2}Q \diag[1_p^\top (S\odot S) (Q\odot Q)^{-1}] Q \Sigma^{1/2}\|_{\Fr}^2 \asymp 1_p^\top (S\odot S)(Q\odot Q)^{-1}(S\odot S) 1_p,
\end{align*}
where in the second step we use \eqref{Wifr} with $v^\top = 1_p^\top (S \odot S)(Q\odot Q)^{-1}$.
By $\|S\odot S\| \le \|S\|_{\op}^2 \asymp n^{-1}$ and $(Q\odot Q)^{-1}\asymp 1$, we have $\mathrm{Var}(1_p^\top\hV) = O(n^{-1})$.
Therefore, combining this with \eqref{varhbeta}
and $\|\beta\|\asymp 1$, we conclude \eqref{hbetapart}.

Then we verify \eqref{hVpart} by checking that the variance of the left term tends to zero.
Writing $1_p^\top (Q\odot Q)^{-1}(\hep\odot \hep) = Z^\top \Sigma^{1/2}Q \diag[1_p^\top (Q\odot Q)^{-1}] Q \Sigma^{1/2}Z,$
we obtain 
\begin{equation*}\begin{aligned}
    \mathrm{Var}\left(1_p^\top (Q\odot Q)^{-1}(\hep\odot \hep)\right) &\asymp \|\Sigma^{1/2}Q \diag[1_p^\top (Q\odot Q)^{-1}] Q \Sigma^{1/2}\|_{\Fr}^2 \asymp  1_p^\top (Q\odot Q)^{-1}1_p \asymp n,
\end{aligned}\end{equation*}
where in the second step we use \eqref{Wifr}. Since $\tr \Sigma \asymp n$, we conclude \eqref{hVpart}.
   
\end{proof}

\subsection{Proof of Lemma \ref{lem:cons-est} and Theorem \ref{thmbetanormal}}\label{sec:pfinf}
\subsubsection{Proof of Lemma \ref{lem:cons-est}}

We start with the first conclusion.
By \eqref{VhatV} and the conclusion $\|G(w_p)\|_{\Fr} = o(n^{-1/2}[\lambda_{\min}(X^\top X)]^{-1})$ from Proposition \ref{lemWnorm}, we have  \begin{equation}\label{varestvar}\Var{\{w_p^\top S \diag\left[(Q\odot Q)^{-1}(\hep \odot \hep)\right] S^\top w_p\}}=o (n^{-1}[\lambda_{\min}(X^\top X)]^{-2}).
\end{equation}
 This and $w_p^\top S \Sigma S^\top w_p = w_p^\top (X^\top X)^{-1}X^\top \Sigma X (X^\top X)^{-1}w_p \asymp [\lambda_{\min}(X^\top X)]^{-1}$ yield  $$ \frac{w_p^\top S \diag\left[(Q\odot Q)^{-1}(\hep \odot \hep)\right] S^\top w_p}{w_p^\top S \Sigma S^\top w_p} \to_P 1.$$

To show the second conclusion, it suffices to prove that for $i\in[n]$,
\begin{equation}\label{hVifourmbd}\E |\hV_i - V_i |^ 4 = o(n^{-1}[\lambda_{\min}(X^\top X)]^{-4}).\end{equation} 
This together with $V_i = e_i^\top S \Sigma S^\top e_i \asymp [\lambda_{\min}(X^\top X)]^{-1}$ 
implies that for any $\ep_0>0$,
$$\P \left(\max_{i\in[n]}|\hV_i/V_i-1|>\epsilon_0 \right) \le \sum_{i=1}^n \P \left(|\hV_i/V_i-1|>\epsilon_0 \right) \le \sum_{i=1}^n \E  |\hV_i - V_i |^ 4 (\ep_0 V_i)^{-4}=o(1).$$

By \eqref{quadformZ} and using Lemma \ref{a4}, we have 
$\E |\hV_i - V_i |^ 4 \le C \left[ \E |Z_i|^4 \|G(e_i)\|_{\Fr}^4 + \E |Z_i|^8 \tr G(e_i)^4\right]$ for some positive constant $C$. Then we bound $\|G(w_p)\|_{\Fr}^4$ and $\tr G(w_p)^4$ for any $w_p$ of unit norm.
Recalling the lower bound in \eqref{lbdwShad}, we have the  upper bound for $\|(w_p^\top S) \odot  (w_p^\top S)\|$ given as
\begin{equation*}\begin{aligned}
    \|(w_p^\top S) \odot  (w_p^\top S)\| & \le \max_{j\in[n]}|w_p^\top S_{.j}|  \left(\sum_{j=1}^n  (w_p^\top S_{.j})^2 \right)^{1/2}\\
    & \le  \max_{j\in[n]}|w_p^\top S_{.j}|[\lambda_{\min}(X^\top X)]^{-1/2} = o(n^{-1/4}[\lambda_{\min}(X^\top X)]^{-1}),
\end{aligned}\end{equation*}
where the last step uses condition \ref{cond2lemWnorm} of Proposition \ref{lemWnorm}. Then using \eqref{Wifr} we can check that $$\|G(w_p)\|_{\Fr} = o(n^{-1/4}[\lambda_{\min}(X^\top X)]^{-1}).$$
Using $\|G(w_p)\| = o(n^{-1/2}[\lambda_{\min}(X^\top X)]^{-1})$---concluded from Proposition \ref{lemWnorm}---we have for any $w_p$ of unit norm
that $\tr G(w_p)^4 = o(n^{-2}[\lambda_{\min}(X^\top X)]^{-4})$. Combining these bounds with $\E|Z_i|^8 = O(n)$ for $i\in[n]$, we conclude \eqref{hVifourmbd}.
\qed 

\medskip

\subsubsection{Proof of Theorem \ref{thmbetanormal}}
By the first conclusion in Lemma \ref{lem:cons-est} and  Slustky's theorem, it suffices to show that \begin{equation}\label{cltoracle}
    \frac{w_p^\top \hat{\beta} - w_p^\top \beta}{\sqrt{w_p^\top S \Sigma S^\top w_p}} \Rightarrow \N(0,1).
\end{equation}
Let $f:\R^n\to\R$ be defined for all $z\in\R^n$ by
$f(z)= w_p^\top (\hat{\beta}-\beta) = w_p^\top (X^\top X)^{-1}X^\top \Sigma^{1/2}z$.
We have \begin{equation*}  
\E \sum_{j=1}^n \left|\frac{\partial f}{\partial z_j}(Z)\right|^4= 
\sum_{j=1}^n (w_p^\top (X^\top X)^{-1}X^\top \Sigma^{1/2} e_j)^4. \end{equation*}
Therefore, applying  Theorem 2.2 of \cite{chatterjee2009fluctuations} with $\kappa_2 =0$ and $\sigma^2 =  w_p^\top S \Sigma S^\top w_p$, along with the assumption \eqref{assdelo}, we conclude \eqref{cltoracle}.

\section{Additional simulation results}\label{sec:appendix-simulations}
This section includes additional simulation results mentioned in Section \ref{sec:num}. 

\subsection{Case 1}
Figure \ref{fig:case1-allcoordinae} displays the mean type-I error for each coordinate over 1000 simulations in Case 1.
It exhibits a similar pattern to that shown in Figure \ref{fig:allcoordinae}. 
We also plot the mean type-I error in the first and second coordinates over 1000 simulations in Figure \ref{fig:case1-bootstrap-jackknife-first}. 
Compared with case 2 shown in Figure \ref{fig:case2-bootstrap}, where MW has an inflated type-I error for the first coordinate, 
here the MW estimator is more accurate, 
though still performing slightly worse than the Hadamard-t estimator. We also observe that the Hadamard-t estimator is more accurate compared with the MW estimator for larger $p$, as also reflected by MAD reported in Figure \ref{fig:case1-allcoordinae}.

\begin{figure}
    \begin{subfigure}{.5\textwidth}
  \centering
  \includegraphics[scale=0.5]{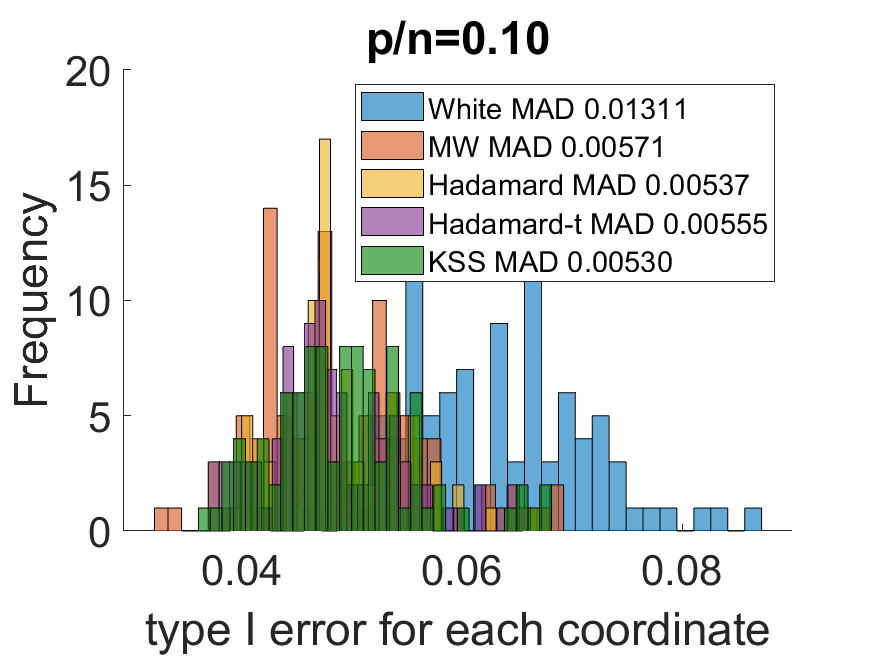}
\end{subfigure}
\begin{subfigure}{0.5\textwidth}
  \centering
  \includegraphics[scale=0.5]{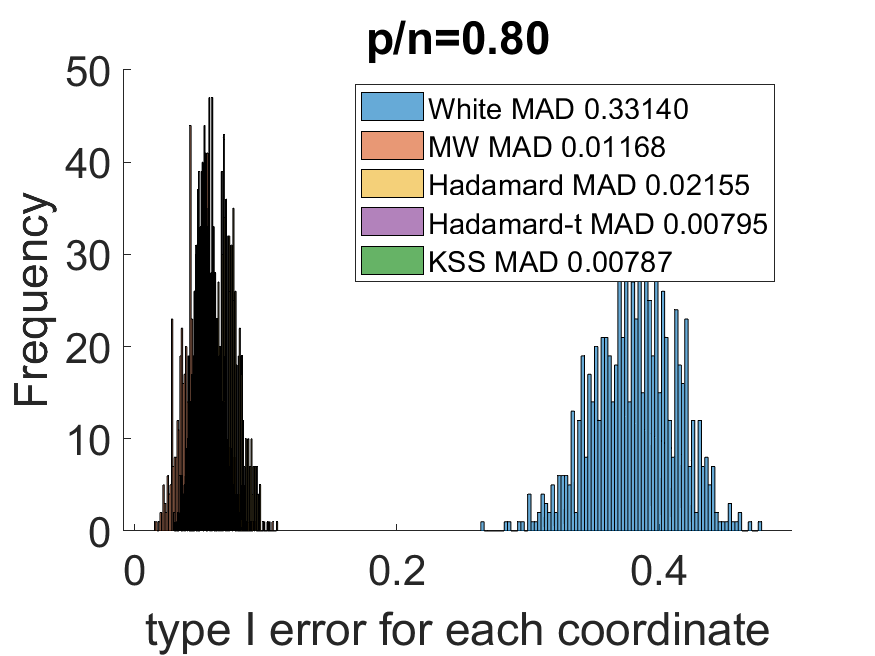}
\end{subfigure}
\caption{Mean type-I error for each coordinate over 1000 simulations for Case 1.}
    \label{fig:case1-allcoordinae}
\end{figure}

\begin{figure}
\begin{subfigure}{.5\textwidth}
  \centering
  \includegraphics[scale=0.5]{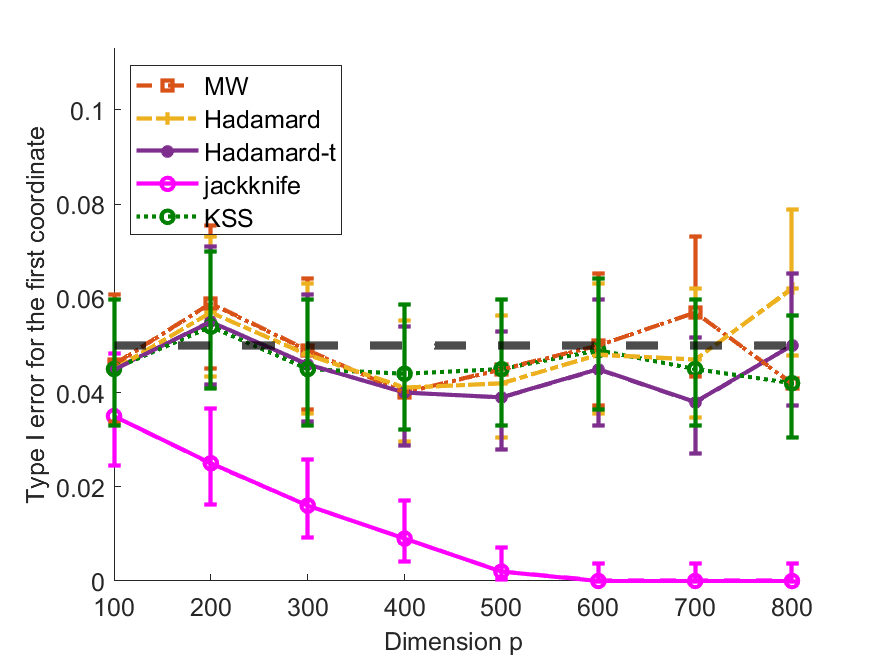}
\end{subfigure}
\begin{subfigure}{0.5\textwidth}
  \centering
  \includegraphics[scale=0.5]{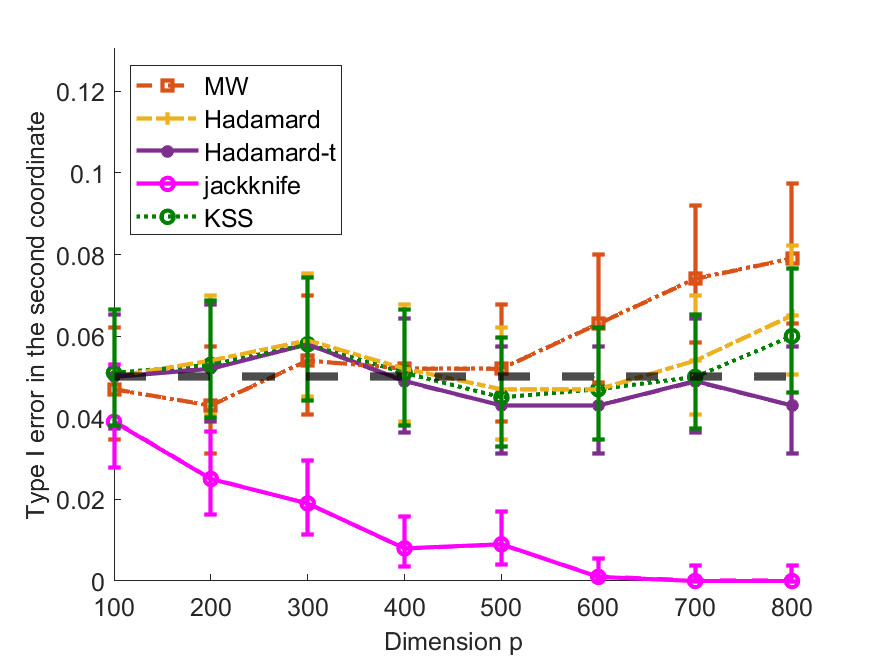}
\end{subfigure}
\caption{Mean type-I error in the first and second coordinate over 1000 simulations each for Case 1. The error bars represent 95\% Clopper-Pearson intervals for the coverage.}
    \label{fig:case1-bootstrap-jackknife-first}
\end{figure}

\subsection{Case 2}
It is observed from Table \ref{tab:cover-error-jackknife} and Figure \ref{fig:case2-bootstrap-jackknife-first} that the jackknife estimator performs poorly. 
The performance of KSS is similar to that of the Hadamard-t estimator.

\begin{table}[ht!]
\caption {Type-1 error in the first coordinate for various methods.} \label{tab:cover-error-jackknife} 
\centering
\setlength\extrarowheight{-1pt}
\begin{tabular}{ccccccccc} 
\hline
Method \textbackslash\, Dimension &100& 200& 300& 400& 500& 600& 700& 800\\
\hline
MW &0.072 &0.080 & 0.088 & 0.100 & 0.081 & 0.100 & 0.082 & 0.071\\ 
\hline
Hadamard &0.067 & 0.070 & 0.064 & 0.060 & 0.051 & 0.059 & 0.0560 & 0.056\\ 
\hline
Hadamard-t &0.063 & 0.067 & 0.061 & 0.058 & 0.045 & 0.056 & 0.048 & 0.040\\ 
\hline
jackknife &0.061 & 0.053 & 0.031 & 0.030 & 0.011 & 0.009 & 0.003 & 0.000\\
\hline
\end{tabular}
\end{table}

\vspace{3em}

\begin{figure}
\begin{subfigure}{.5\textwidth}
  \centering
  \includegraphics[scale=0.5]{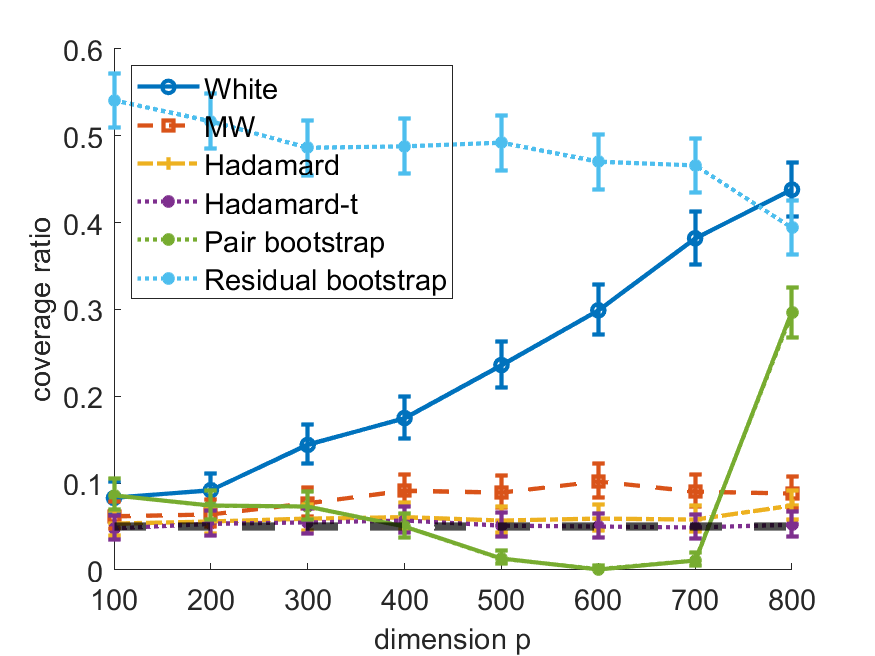}
\end{subfigure}
\begin{subfigure}{0.5\textwidth}
  \centering
  \includegraphics[scale=0.5]{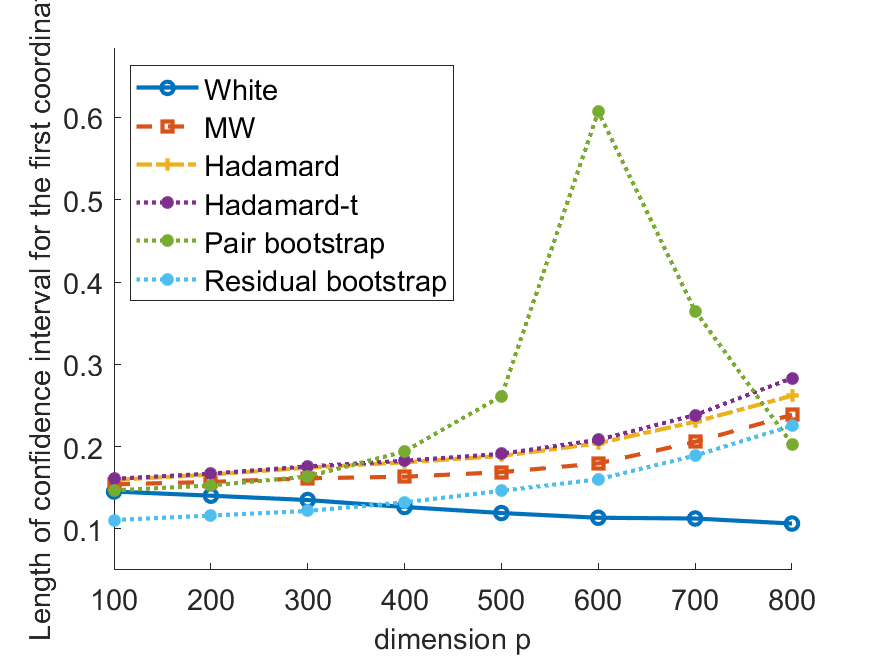}
\end{subfigure}
\caption{Results for data generated from model in Case 2. Left: Mean type-I error in the first coordinate over 1000 simulations; Right: Mean length of the confidence intervals.  The error bars represent 95\% Clopper-Pearson intervals for the coverage.}
    \label{fig:case2-bootstrap-6methods}
\end{figure}

\begin{figure}
\begin{subfigure}{.5\textwidth}
  \centering
  \includegraphics[scale=0.6]{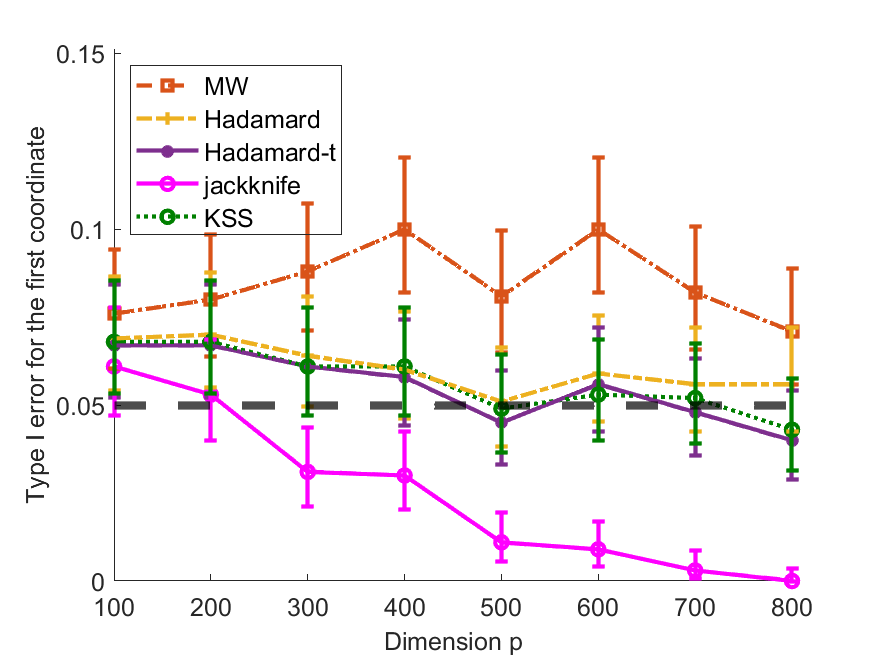}
\end{subfigure}
\begin{subfigure}{0.5\textwidth}
  \centering
  \includegraphics[scale=0.6]{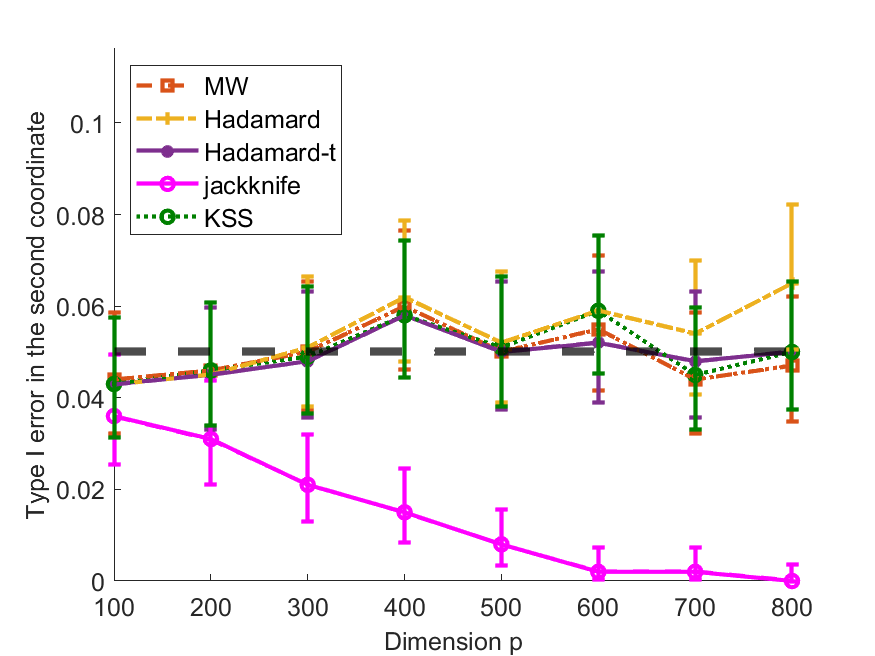}
\end{subfigure}
\caption{Mean type-I error in the first and second coordinate over 1000 simulations each for Case 2. The error bars represent 95\% Clopper-Pearson intervals for the coverage.}
    \label{fig:case2-bootstrap-jackknife-first}
\end{figure}

\vspace{3em}
Figure \ref{fig:case2-mse} illustrates the bias in estimating the MSE of the OLS estimators in Case 2. We observe the same pattern as in Case 1,
where the MW and Hadamard estimator have comparable performance and both are much better than the White estimator.
\begin{figure}
\begin{subfigure}{.5\textwidth}
  \centering
  \includegraphics[scale=0.35]{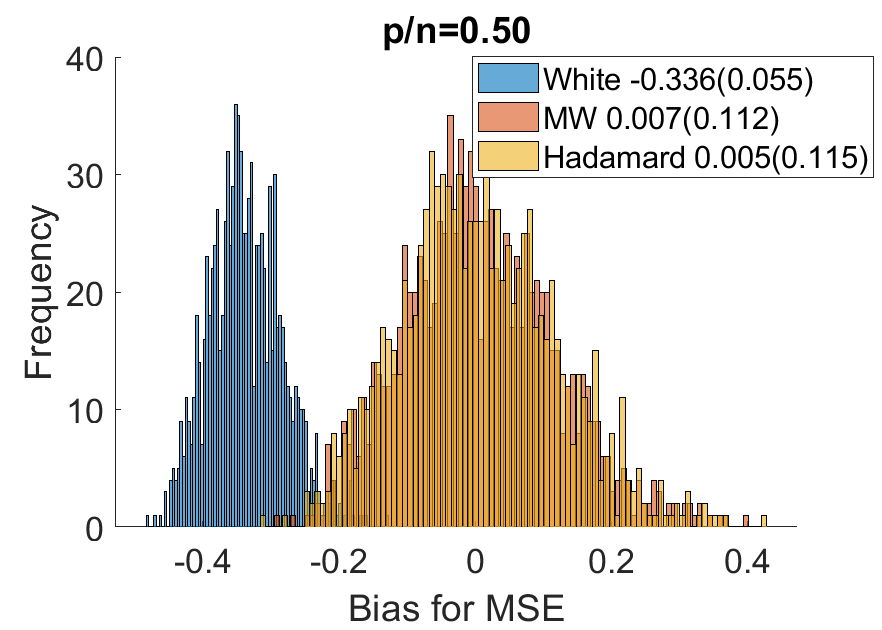}
  \caption{$p/n=0.5$}
\end{subfigure}
\begin{subfigure}{.5\textwidth}
  \centering
  \includegraphics[scale=0.35]{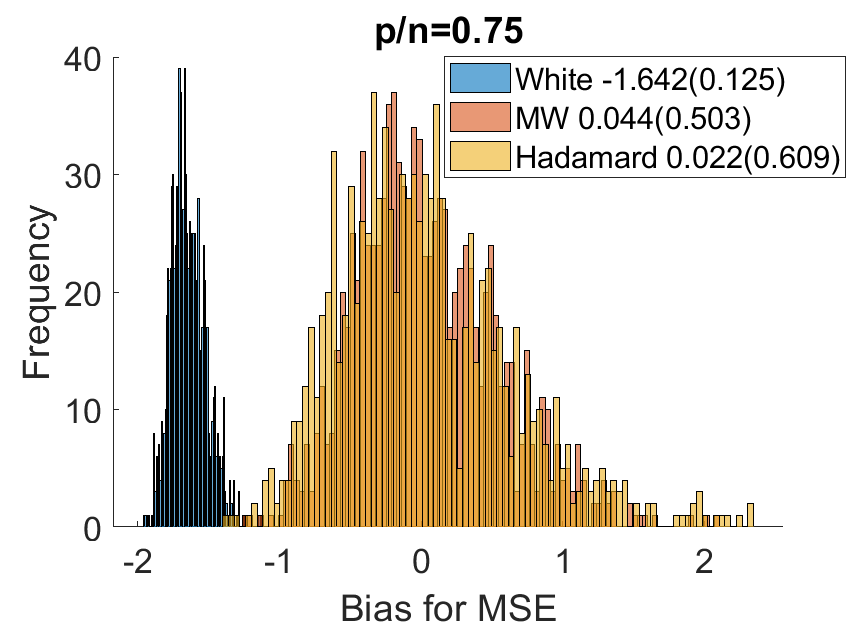}
  \caption{$p/n=0.75$}
\end{subfigure}
\caption{Bias in estimating MSE for data from Case 2.}
\label{fig:case2-mse}
\end{figure}

\end{document}